\numberwithin{equation}{section}
\newtheorem{proposition}{Proposition}[section]
\newtheorem{theorem}[proposition]{Theorem}
\newtheorem{lemma}[proposition]{Lemma}
\theoremstyle{remark}
\newtheorem{claim}{Claim}%[section]
\newtheorem{example}[proposition]{Example}
\renewcommand{\leq}{\leqslant}
\renewcommand{\le}{\leqslant}
\renewcommand{\ge}{\geqslant}
\newcommand{\eps}{\varepsilon}
\newcommand{\som}{{\textstyle\sum}}
\newcommand{\norm}[1]{\|#1\|}
\newcommand{\RV}{RV}
\renewcommand{\Pr}{\operatorname{P}}
\newcommand{\E}{\operatorname{E}}
\newcommand{\sign}{\operatorname{sign}}
\newcommand{\Id}{\operatorname{Id}}
\newcommand{\1}{\mathbf{1}}
\DeclareMathOperator\symdif{\triangle}
\newcommand{\dto}{\rightsquigarrow}
\newcommand{\eqd}{\stackrel{d}{=}}
\newcommand{\downto}{\downarrow}
\newcommand{\boundary}{\partial}
\newcommand{\defn}{\emph}
\newcommand{\Banach}{\mathbb{B}}
\newcommand{\B}{\Banach}
\newcommand{\dual}{\Banach^\ast}
\newcommand{\RR}{\mathbb{R}}
\newcommand{\ZZ}{{\mathbb{Z}}}
\newcommand{\Z}{\ZZ}
\newcommand{\NN}{{\mathbb{N}}}
\newcommand{\Czero}{{\cal C}_0}
\newcommand{\Bzero}{{\Banach_0}}
\newcommand{\ball}[1]{B_{0, {#1}}}
\newcommand{\Mzero}{M_0}
\newcommand{\sphere}{{\mathbb{S}}}
\newcommand{\law}{{\cal L}}
\newcommand{\Pareto}{\operatorname{Pareto}}
\begin{document}

\begin{frontmatter}
\title{Regularly varying time series in Banach spaces\protect\thanksref{}}
\runtitle{Regularly varying time series in Banach spaces}
%\thankstext{T1}{Footnote to the title.}

\author{\fnms{Thomas} \snm{Meinguet}\thanksref{t1}\ead[label=e1]{thomas.meinguet@uclouvain.be}}
\and
\author{\fnms{Johan} \snm{Segers}\thanksref{t1}\corref{}\ead[label=e2]{johan.segers@uclouvain.be}}

\thankstext{t1}{Research supported by IAP research network grant nr.\ P6/03 of the Belgian government (Belgian Science Policy) and by contract nr.\ 07/12/002 of the Projet d'Actions de Recherche Concert\'ees of the Communaut\'e fran\c{c}aise de Belgique, granted by the Acad\'emie universitaire Louvain.}

\runauthor{T. Meinguet and J. Segers}

\affiliation{Universit\'{e} catholique de Louvain}

\address{Universit\'e catholique de Louvain, Institut de statistique\\
Voie du Roman Pays 20, B-1348 Louvain-la-Neuve, Belgium\\
\printead{e1,e2}}

\begin{abstract}
When a spatial process is recorded over time and the observation at a given time instant is viewed as a point in a function space, the result is a time series taking values in a Banach space. To study the spatio-temporal extremal dynamics of such a time series, the latter is assumed to be jointly regularly varying. This assumption is shown to be equivalent to convergence in distribution of the rescaled time series conditionally on the event that at a given moment in time it is far away from the origin. The limit is called the tail process or the spectral process depending on the way of rescaling. These processes provide convenient starting points to study, for instance, joint survival functions, tail dependence coefficients, extremograms, extremal indices, and point processes of extremes. The theory applies to linear processes composed of infinite sums of linearly transformed independent random elements whose common distribution is regularly varying.
\end{abstract}

\begin{keyword}[class=AMS]
\kwd[Primary ]{60G70}
\kwd[; secondary ]{60G60}
\end{keyword}

\begin{keyword}
\kwd{Banach space}
\kwd{extremal index}
\kwd{extreme value}
\kwd{extremogram}
\kwd{exponent measure}
\kwd{linear process}
\kwd{regular variation}
\kwd{spectral measure}
\kwd{spectral process}
\kwd{tail dependence}
\kwd{tail process}
\kwd{time series}
\end{keyword}

\end{frontmatter}

\section{Introduction}
\label{S:intro}

% context
A powerful way to model spatio-temporal phenomena is by means of time series of functional observations.  Objectives of the analysis include inference on and prediction of certain functionals of the process, for instance the integral or the maximum of the process over a certain subregion. For risk management purposes, the interest is often in the extremal dynamics of such processes, both within space and over time. Examples from the literature include sea levels along dikes in the Netherlands \citep{DHL01}, windspeeds along the faces of a building \citep{DM08}, and precipitation in the state of Colorado \citep{COO07}.

% need
Standard functional data analysis starts from the assumption of finite second moments and proceeds via the sequences of mean and autocovariance operators of the process \citep{BOS00}. The time series models in use are mostly linear. In the heavy-tailed case, however, such an approach may be inadequate. Already in \citet{DR85}, sample covariance functions of univariate linear processes with regularly varying, infinite variance innovations were found to have nondegenerate stable limits. Similar results for nonlinear processes were obtained in \citet{DM98}.

% task
While originally defined for univariate functions and random variables, the concept of regular variation has by now been extended to quite abstract settings, including the one of stochastic processes \citep{HL05,HL06}. In \citet{DM08}, the extreme-value behavior of a certain linear process with regularly varying innovations in Skorohod space was investigated. As for random variables, regular variation provides the mathematical backbone for a coherent theory of extreme values of random functions. By considering the functional observations as points in a suitable function space, we are led to consider regularly varying time series taking values in a Banach space. 

Our aim is to find a convenient way to express and study interesting tail-related quantities of time series of functional data, such as joint survival functions, tail dependence coefficients, extremograms \citep{DM09}, extremal indices and other characteristics of clusters of extremes. Moreover we aim at specializing these results to linear processes with regularly varying innovations.

% results
As in the finite-dimensional case \citep{BS09}, \defn{(joint) regular variation} of a stationary time series $(X_t)_{t \in \mathbb{Z}}$ in a separable Banach space $\Banach$ is shown to be equivalent to the existence of the limit in distribution of the rescaled process
\[
  (X_t/u)_{t \in \mathbb{Z}} \text{ conditionally on } \norm{X_0} > u \text{ as } u \to \infty
\]
in the proper product space. The limit in distribution, denoted by $(Y_t)_{t\in\Z}$, is the \defn{tail process}. It admits a familiar-looking decomposition into independent radial and angular components. The radial component is fully determined by the index of regular variation $\alpha$ of the random variable $\norm{X_0}$, while the angular component, called the \defn{spectral process}, effectively captures all aspects of extremal dependence, both within space and over time. Specifically, the spectral process $(\Theta_t)_{t\in\Z}$ is given by the limit in distribution of
\[
  (X_t/\norm{X_0})_{t \in \mathbb{Z}} \text{ conditionally on } \norm{X_0} > u \text{ as } u \to \infty.
\]
The distributions of the tail and spectral processes are uniquely determined by the ones of their restrictions to the nonnegative time axis.

The property of regular variation turns out to be preserved under bounded linear operators. This allows us to study \defn{linear processes}
\[
  X_t = \sum_{i \in \ZZ} T_i Z_{t-i}, \qquad t \in \ZZ,
\]
with $(Z_t)_{t \in \ZZ}$ independent random elements in a Banach space $\Banach_1$ and $T_i$ ($i \in \ZZ$) bounded linear operators from $\Banach_1$ to a second Banach space $\Banach_2$. If the common distribution of the innovations $Z_t$ is regularly varying and if the norms of the operators $T_i$ satisfy the summability condition \eqref{E:Resnick} below, the series $(X_t)_{t \in \ZZ}$ is regularly varying in $\Banach_2$. Its spectral process reflects the common extreme-value heuristic that large values in the series most likely arise from a single large shock among the innovations.

% discussion
All in all we find that the spectral process provides a convenient, natural, and unifying concept for expressing and studying extremal characteristics of regularly varying time series in infinite-dimensional spaces. A next step could be to reevaluate certain methods from functional data analysis when basic moment assumptions are violated and are replaced by the assumption of regular variation. Finally, note that in some of our results, the vector space structure of the Banach space $\Banach$ does not come into play, so that certain parts of the theory might even be carried over to more general spaces such as cones \citep{HL06, DMZ08}.

% outline
The paper is organized in two parts. In the first part, Sections~\ref{S:RV}--\ref{S:uses}, the theory of regular variation of random elements and stationary time series in real, separable Banach spaces is developed in general. The emphasis is on the properties and the use of the spectral process. In the second part, Section~\ref{S:linear}--\ref{S:Examples}, the effect of bounded linear operators on regular variation is investigated, with applications to infinite random sums and linear processes with regularly varying innovations. Some auxiliary results are relegated to the Appendix.

\section{Regular variation}
\label{S:RV}

Regular variation of probability measures on Euclidean space is usually defined in terms of vague convergence of a sequence of Radon measures, living on Euclidean space compactified at infinity and punctured at the origin \citep{RES07}. For Banach spaces $\Banach$ that are not locally compact, such an approach does not work. A possible way out is to replace vague convergence by $\hat{w}$-convergence described in \cite{DVJ88}. This, however, requires changing the metric on $\Banach$ and completing it at infinity \citep{DHL01, HL05, DHF06, DM08}. These steps are not needed in the approach of \cite{HL06} based on $M_0$-zero convergence, which we follow here. In addition, we provide a number of characterizations of regular variation that are purely probabilistic.

Let $\Banach$ be a real, separable Banach space. For $r > 0$, let $B_{0, u} = \{ x \in \Banach : \norm{x} < u \}$ be the open ball in $\Banach$ centered at $0$ with radius $u$. Let $\Mzero = \Mzero(\Banach)$ be the class of Borel measures on $\Bzero = \Banach \setminus \{ 0 \}$ whose restriction to $\Banach \setminus \ball{u}$ is finite for all $u > 0$. Put $\Czero = \Czero(\Banach)$ the class of bounded and continuous functions $f : \Bzero \to \RR$ for which there exists $u > 0$ such that $f$ vanishes on $B_{0, u}$. A sequence of measures $\mu_n$ in $\Mzero$ converges to $\mu$ in $\Mzero$ if and only if $\lim_{n \to \infty} \int f \, d\mu_n = \int f \, d\mu$ for all $f \in \Czero$. Essentially, convergence in $\Mzero$ is equivalent to weak convergence of finite measures when restricted to $\Banach \setminus \ball{u}$ for all but at most countably many $u > 0$. The topology of $\Mzero$-convergence is metrizable, turning $\Mzero$ into a complete, separable metric space. Versions of the Portmanteau theorem, the continuous mapping theorem, and Prohorov's theorem hold; see \citet[Section~2]{HL06}. Finally, let $\RV_\tau$ be the class of functions that are regularly varying at infinity with index $\tau \in \RR$, that is, the class of measurable functions $f : (0, \infty) \to (0, \infty)$ such that $\lim_{u \to \infty} f(ux)/f(u) = x^\tau$ for all $x > 0$.

A random element $X$ in $\Banach$ is \defn{regularly varying with index $\alpha > 0$} if there exists a nonzero $\mu \in \Mzero$ and a function $V \in \RV_{-\alpha}$ such that
\begin{equation}
\label{E:RV}
  \frac{1}{V(u)} \Pr(u^{-1}X \in \,\cdot\,) \to \mu \qquad (u \to \infty) \quad \text{in $\Mzero$}.
\end{equation}
The limit measure $\mu$ is defined up to a multiplicative constant only and it satisfies the homogeneity property $\mu(r A) = r^{-\alpha} \, \mu(A)$ for every $r > 0$ and Borel set $A \subset \Bzero$. Let $\sphere = \{ \theta \in \Banach : \norm{\theta} = 1 \}$ denote the unit sphere in $\Banach$. By homogeneity, $\mu(\sphere) = 0$. Equation~\eqref{E:RV} and the Portmanteau theorem then imply 
\[
  \Pr(\norm{X} > u) / V(u) \to \mu ( \{ x \in \Banach : \norm{x} > 1 \} ) =: c \ne 0 \qquad (u \to \infty).
\]

For $\mu$ as in \eqref{E:RV}, define a probability measure $\lambda$ on $\sphere$ by
\[
  \lambda(A) = c^{-1} \, \mu( \{ x \in \Banach : \norm{x} > 1, \, x / \norm{x} \in A \} ),
  \qquad \text{Borel sets $A \subset \sphere$}.
\]
%(Note that the denominator is nonzero since $\mu$ is homogeneous and nonzero on $\Bzero$.) 
We call $\lambda$ the \defn{spectral (probability) measure} of $X$. Put 
\begin{equation}
\label{E:polar}
  T : (0, \infty) \times \sphere \to \Bzero : (r, \theta) \mapsto r \theta
\end{equation}
and let $\nu_\alpha$ be the measure on $(0, \infty)$ given by $\nu_\alpha(dr) = d(-r^{-\alpha}) = \alpha \, r^{-\alpha-1} \, dr$ for $r > 0$. We have
\begin{equation}
\label{E:lambda2mu}
  \mu = c \, (\nu_\alpha \otimes \lambda) \circ T^{-1},
\end{equation}
with `$\otimes$' denoting product measure, an expression which is equivalent to
\begin{equation}
\label{E:lambda2mu:f}
  \int_\Bzero f \, d\mu = c \int_{\sphere} \int_0^\infty f(r\theta) \, d(-r^{-\alpha}) \, \lambda(d\theta)
\end{equation}
for all $\mu$-integrable $f : \Bzero \to \RR$.

In the following proposition, we provide three characterizations of regular variation in terms of weak convergence of probability distributions. Let $\Pareto(\alpha)$ denote the Pareto distribution with parameter $\alpha$, that is, the distribution of a positive random variable $Y$ with tail function $\Pr(Y > y) = y^{-\alpha}$ for $y \ge 1$. By $\law(Z)$ we mean the law of a random element $Z$, and $\law(Z \mid A)$ denotes the law of $Z$ conditionally on the event $A$. Let the arrow `$\dto$' denote convergence in distribution and let $\1(A)$ denote the indicator variable of the event $A$.

\begin{proposition}
\label{P:RV}
Let $X$ be a random element in $\Banach$, let $\alpha > 0$, and let $\lambda$ be a probability measure on the unit sphere $\sphere$ in $\Banach$. The following statements are equivalent:
\begin{itemize}
\item[(i)] $X$ is regularly varying with index $\alpha > 0$ and spectral measure $\lambda$.
\item[(ii)] The function $u \mapsto \Pr(\norm{X} > u)$ belongs to $\RV_{-\alpha}$ and
\[
  \law( X / \norm{X} \mid \norm{X} > u ) \dto \lambda \qquad (u \to \infty), \quad \text{in $\sphere$}.
\]
\item[(iii)] In $(0, \infty) \times \sphere$, we have
\[
  \law \bigl( \norm{X} / u, \, X / \norm{X} \, \big| \, \norm{X} > u \bigr) \dto \Pareto(\alpha) \otimes \lambda \qquad (u \to \infty).
\]
\item[(iv)] In $\Banach$, we have, with $T$ as in \eqref{E:polar},
\[
  \law( X / u \mid \norm{X} > u) \dto \bigl(\Pareto(\alpha) \otimes \lambda\bigr) \circ T^{-1} \qquad (u \to \infty).
\]
\end{itemize}
\end{proposition}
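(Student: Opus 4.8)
The plan is to establish the cyclic chain of implications (i) $\Rightarrow$ (iv) $\Rightarrow$ (iii) $\Rightarrow$ (ii) $\Rightarrow$ (i), exploiting the polar decomposition \eqref{E:polar} to move between the various formulations. The common theme is that the normalizing function $V$ in \eqref{E:RV} and the tail function $u \mapsto \Pr(\norm{X} > u)$ differ only by the asymptotically constant factor $c$, so that conditioning on $\{\norm{X} > u\}$ is the same, up to this constant, as dividing $\Pr(u^{-1} X \in \,\cdot\,)$ by $V(u)$ and restricting to the complement of the unit ball. More precisely, for a Borel set $A \subset \Banach \setminus \ball{1}$ one has
\[
  \Pr(u^{-1} X \in A \mid \norm{X} > u) = \frac{\Pr(u^{-1}X \in A)/V(u)}{\Pr(\norm{X} > u)/V(u)},
\]
and under \eqref{E:RV} the numerator converges to $\mu(A)$ (for $\mu$-continuity sets bounded away from $0$) while the denominator converges to $c$. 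This identity is the engine behind (i) $\Rightarrow$ (iv).

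For (i) $\Rightarrow$ (iv): starting from \eqref{E:RV}, I would first note that $\Pr(\norm{X} > u)/V(u) \to c$ as stated in the excerpt, so $u \mapsto \Pr(\norm{X} > u)$ is itself in $\RV_{-\alpha}$ and one may replace $V$ by $\Pr(\norm{\,\cdot\,}> u)$ at the cost of the factor $c$. Then $\law(u^{-1} X \mid \norm{X} > u)$ is a probability measure on $\Banach$ supported (essentially) on $\Banach \setminus \ball{1}$, and by the Portmanteau theorem for $\Mzero$-convergence together with the displayed quotient identity, it converges weakly to $c^{-1} \mu$ restricted to $\Banach \setminus \ball{1}$; one checks $c^{-1}\mu(\Banach \setminus \ball{1}) = 1$ by definition of $c$. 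Identifying $c^{-1}\mu|_{\Banach\setminus\ball{1}}$ with $(\Pareto(\alpha)\otimes\lambda)\circ T^{-1}$ is exactly a rewriting of \eqref{E:lambda2mu:f}: under $T^{-1}$ the radial part of $c^{-1}\mu$ restricted to $r > 1$ has tail $r^{-\alpha}$, i.e.\ is $\Pareto(\alpha)$, and the angular part is $\lambda$ by the very definition of the spectral measure. This gives (iv).

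For (iv) $\Rightarrow$ (iii): apply the continuous mapping theorem to the map $x \mapsto (\norm{x}, x/\norm{x})$, which is continuous from $\Banach \setminus \ball{1}$ to $(1,\infty) \times \sphere$; since the limit $(\Pareto(\alpha)\otimes\lambda)\circ T^{-1}$ assigns zero mass to the sphere $\{\norm{x}=1\}$, no boundary issue arises, and the pushforward of $\law(u^{-1}X\mid\norm{X}>u)$ under this map is precisely $\law(\norm{X}/u,\,X/\norm{X}\mid\norm{X}>u)$, converging to $\Pareto(\alpha)\otimes\lambda$. For (iii) $\Rightarrow$ (ii): the marginal in the first coordinate gives $\law(\norm{X}/u \mid \norm{X}>u) \dto \Pareto(\alpha)$, which is the standard characterization that $u\mapsto\Pr(\norm{X}>u)$ is regularly varying with index $-\alpha$; the marginal in the second coordinate gives the stated convergence of $\law(X/\norm{X}\mid\norm{X}>u)$ to $\lambda$. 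Finally (ii) $\Rightarrow$ (i): reconstruct $\mu$ via \eqref{E:lambda2mu}, set $V(u) = \Pr(\norm{X}>u)$, and verify \eqref{E:RV} by testing against $f \in \Czero$; writing $f$ in polar coordinates and using (ii) to control the angular distribution and regular variation of the tail to control the radial part via a Fubini-type argument recovers the limit $\int f\,d\mu$.

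The main obstacle is the passage (ii) $\Rightarrow$ (i), where one must upgrade convergence of two marginal conditional laws to convergence of the full rescaled law in $\Mzero$. The subtlety is that $\Mzero$-convergence is tested against functions in $\Czero$ that vanish near the origin but live on all of $\Banach \setminus \ball{u}$, so one needs to handle the radial and angular behaviour jointly and uniformly over shells $\{ a < \norm{x} \le b \}$; the clean way is to reduce to convergence of the joint conditional law in (iii) — note that (ii) does not obviously give the \emph{product} structure $\Pareto(\alpha)\otimes\lambda$, so one must show that asymptotically the normalized radius and the direction decouple. This decoupling follows from regular variation: for fixed $y>1$,
\[
  \Pr(\norm{X} > uy,\, X/\norm{X}\in A \mid \norm{X}>u)
  = \Pr(X/\norm{X}\in A \mid \norm{X} > uy)\,\frac{\Pr(\norm{X}>uy)}{\Pr(\norm{X}>u)},
\]
and as $u\to\infty$ the first factor tends to $\lambda(A)$ (for $\lambda$-continuity sets $A$) by (ii) applied along $uy$, while the second tends to $y^{-\alpha}$ by regular variation of the tail — yielding exactly $\Pareto(\alpha)\otimes\lambda$ evaluated on the rectangle $(y,\infty)\times A$. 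A standard $\pi$-$\lambda$ / generating-class argument then promotes this to weak convergence of the joint law, and one is back in case (iii), from which \eqref{E:RV} is recovered as above.
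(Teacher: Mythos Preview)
Your proposal is correct and the substantive arguments coincide with the paper's: the decoupling identity you display for (ii)$\Rightarrow$(iii) is exactly the paper's step (formalised there via Lemma~\ref{L:product}), the polar-coordinate computation to recover \eqref{E:RV} from (iii) is the same, and the passages between (iii) and (iv) are handled identically by the continuous mapping theorem. The only difference is the traversal order: the paper runs (i)$\Rightarrow$(ii)$\Rightarrow$(iii)$\Rightarrow$(i) with (iii)$\Leftrightarrow$(iv) on the side, whereas you go (i)$\Rightarrow$(iv)$\Rightarrow$(iii)$\Rightarrow$(ii) and then close the loop by re-entering (iii) --- so your ``(ii)$\Rightarrow$(i)'' is effectively the paper's (ii)$\Rightarrow$(iii)$\Rightarrow$(i) and the two chains collapse to the same proof.
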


\begin{proof}%[Proof of Proposition~\ref{P:RV}]
\textit{(i) implies (ii).}  As explained in the paragraph following $\eqref{E:RV}$, a possible choice for $V$ in \eqref{E:RV} is $V(u) = \Pr(\norm{X} > u)$, in which case $\mu(\{ x \in \Banach : \norm{x} > 1 \}) = 1$. Let $g : \sphere \to \RR$ be bounded and continuous and put $f(x) = g(x/\norm{x}) \, \1(\norm{x} > 1)$ for $x \in \Bzero$. The discontinuity set of $f$ is contained in $\sphere$, which is a $\mu$-null set. Hence by Lemma~\ref{L:CMT} and equation \eqref{E:lambda2mu:f}, as $u \to \infty$,
\[
  \E [ g(X / \norm{X}) \mid \norm{X} > u ] = \frac{1}{V(u)} \E [ f(X/u) ] 
  \to \int_{\Bzero} f \, d\mu = \int_{\sphere} g \, d\lambda.
\]

\textit{(ii) implies (iii).} Let $y \ge 1$ and let $g : \sphere \to \RR$ be bounded and continuous. By (ii), as $n \to \infty$,
\begin{multline*}
  \E [ \1(\norm{X}/u > y) \, g(X / \norm{X}) \mid \norm{X} > u ] \\
  = \frac{\Pr(\norm{X} > uy)}{\Pr(\norm{X} > u)} \, \E [ g(X / \norm{X}) \mid \norm{X} > uy ]
  \to y^{-\alpha} \, \int_\sphere g(\theta) \, \lambda(d\theta).
\end{multline*}
In view of Lemma~\ref{L:product}, this implies (iii).

\textit{(iii) implies (i).} Let $f \in \Czero$. Let $z > 0$ be such that $f$ vanishes on $B_{0,z}$. Put $V(u) = \Pr(\norm{X} > u)$. The weak convergence relation $\law( \norm{X} / u \mid \norm{X} > u ) \dto \Pareto(\alpha)$ as $u \to \infty$ implies $V \in \RV_{-\alpha}$. Let $(Y, \Theta)$ be a random element in $(0, \infty) \times \sphere$ with distribution $\Pareto(\alpha) \otimes \lambda$. We have
\begin{align*}
  \frac{1}{V(u)} \E [ f(X/u) ] 
  &= \frac{1}{V(u)} \E [ f(X/u) \, \1(\norm{X} > uz) ] \\
  &= \frac{V(uz)}{V(u)} \E \biggl[ f \biggl( z \frac{\norm{X}}{uz} \frac{X}{\norm{X}} \biggr) \, \bigg| \, \norm{X} > uz \biggr]
  \to z^{-\alpha} \E [ f(zY\Theta) ]
\end{align*}
as $u \to \infty$. By Fubini's theorem, the limit is equal to
\begin{align*}
  z^{-\alpha} \int_\sphere \int_1^\infty f(zy\theta) \, d(-y^{-\alpha}) \, \lambda(d\theta)
  &= \int_\sphere \int_z^\infty f(r\theta) \, d(-r^{-\alpha}) \, \lambda(d\theta) \\
  &= \int_\sphere \int_0^\infty f(r\theta) \, d(-r^{-\alpha}) \, \lambda(d\theta).
\end{align*}
We obtain \eqref{E:RV} with $\mu$ as in \eqref{E:lambda2mu}.

\textit{(iii) implies (iv)}, and \textit{(iv) implies (ii).} By applications of the continuous mapping theorem, upon noting that 
\[
  \frac{X}{u} =  \frac{\norm{X}}{u} \frac{X}{\norm{X}} \qquad \text{and} \qquad \frac{X}{\norm{X}} = \frac{X/u}{\norm{X/u}}. \qedhere
\]
\end{proof}

\section{Spectral process of a stationary time series}
\label{S:timeseries}

Let $(X_t)_{t \in \ZZ}$ be a (strictly) stationary time series in a (real) separable Banach space $\Banach$. The time series is said to be \defn{(jointly) regularly varying with index $\alpha > 0$} if for every positive integer $k$ the vector $(X_1, \ldots, X_k)$ is regularly varying with index $\alpha$ in the Banach space $\Banach^k$. One possible choice for the norm on $\Banach^k$ is $\norm{(x_1, \ldots, x_k)} = \max(\norm{x_1}, \ldots, \norm{x_k})$.

According to the definition above, there exist sequences of measures $\mu_k$ in $M_0(\Banach^k)$ and functions $V_k \in \RV_{-\alpha}$ such that for every positive integer $k$,
\[
  \frac{1}{V_k(u)} \Pr [(u^{-1}X_1, \ldots, u^{-1}X_k) \in \, \cdot \,] \to \mu_k \qquad (u \to \infty) \quad \text{in $M_0(\Banach^k)$}.
\]
These limiting measures give rise to spectral measures $\lambda_k$ on $\sphere_k$, the unit sphere in $\Banach^k$. Although these spectral measures originate from a single, stationary process, it is awkward to describe how they are related because the spheres on which they live are of different dimensions. Therefore, we seek an alternative description in terms of a single object.

In the following, $\Banach^\ZZ$ denotes the space of sequences $(x_t)_{t \in \ZZ}$ in $\Banach$ endowed with the product topology, that is, the topology of elementwise convergence. In $\Banach^\ZZ$, convergence in distribution is equivalent to convergence in distribution in $\Banach^k$ of all finite stretches of length $k$, for every positive integer $k$ (convergence of finite-dimensional distributions, so to speak, except that the dimension of a single $\Banach$ may already be infinite). By convention, $\max \varnothing = 0$.

\begin{theorem}[Spectral process]
\label{T:spectral}
Let $(X_t)_{t \in \ZZ}$ be a stationary time series in $\Banach$ and let $\alpha > 0$. The following statements are equivalent:
\begin{itemize}
\item[(i)] $(X_t)_{t \in \ZZ}$ is regularly varying with index $\alpha$.
\item[(ii)] The function $u \mapsto \Pr(\norm{X_0} > u)$ belongs to $\RV_{-\alpha}$ and here exists a random element $(\Theta_t)_{t \in \ZZ}$ in $\Banach^\ZZ$ such that
\[
  \law \bigl( (X_t / \norm{X_0})_{t \in \ZZ} \, \big| \, \norm{X_0} > u \bigr) \dto (\Theta_t)_{t \in \ZZ} \qquad (u \to \infty).
\]
\item[(iii)] There exists a random element $(\Theta_t)_{t \in \ZZ}$ in $\Banach^\ZZ$ such that in $(0, \infty) \times \Banach^\ZZ$,
\[
  \law \bigl( \norm{X_0} / u, (X_t / \norm{X_0})_{t \in \ZZ} \, \big| \, \norm{X_0} > u \bigr) \dto \bigl( Y, (\Theta_t)_{t \in \ZZ} \bigr)
  \qquad (u \to \infty),
\]
where $Y$ is a $\Pareto(\alpha)$ random variable independent from $(\Theta_t)_{t \in \ZZ}$.
\item[(iv)] There exists a random element $(\Theta_t)_{t \in \ZZ}$ in $\Banach^\ZZ$ such that
\[
  \law\bigl( (X_t / u)_{t \in \ZZ} \mid \norm{X_0} > u \bigr) \dto (Y \Theta_t)_{t \in \ZZ} \qquad (u \to \infty),
\]
where $Y$ is a $\Pareto(\alpha)$ random variable independent from $(\Theta_t)_{t \in \ZZ}$.
\end{itemize}
In this case, the object $(\Theta_t)_{t \in \ZZ}$ is the same across (ii)--(iv) and for every positive integer $k$,
\begin{equation}
\label{E:Theta2muk}
  \frac{1}{\Pr(\norm{X_0} > u)} \Pr[(X_1/u, \ldots, X_k/u) \in \,\cdot\,] \to \mu_k \qquad (u \to \infty)
\end{equation}
in $\Mzero(\Banach^k)$, where $\int f d\mu_k$ for $f \in \Czero(\Banach^k)$ is given by
\[  
  \sum_{j=1}^k \int_0^\infty \E \biggl[ f(0, \ldots, 0, r\Theta_0, \ldots, r\Theta_{k-j}) \, \1 \biggl( \max_{-j+1 \le i \le -1} \norm{\Theta_i} = 0 \biggr) \biggr] \, d(-r^{-\alpha}).
\]
\end{theorem}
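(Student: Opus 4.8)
The engine will be the \emph{tail measure} of the series: a single Borel measure $\nu$ on $\Banach^\ZZ$ (with the zero sequence removed), finite on $\{(y_t)_t:\sup_{|t|\le m}\norm{y_t}>\eps\}$ for all $m$ and $\eps>0$, satisfying $\nu(rA)=r^{-\alpha}\nu(A)$, \emph{shift-invariant}, and encoding at once the spectral process (the angular part of the polar decomposition of $\nu$ at coordinate $0$) and the exponent measures $\mu_k$ (the coordinate projections of $\nu$). I would (a) extract $\nu$, either from (i) or from (iv); (b) read (ii) off $\nu$ and dispatch (ii)$\Rightarrow$(iii)$\Rightarrow$(iv) exactly as in the proof of Proposition~\ref{P:RV}; and (c) identify $\mu_k$ via $\nu$, which both closes the cycle at (i) and yields \eqref{E:Theta2muk}. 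Throughout, the stated equivalence between convergence in distribution in $\Banach^\ZZ$ and convergence of all finite stretches reduces every claim to the spaces $\Banach^k$, where Proposition~\ref{P:RV} is available. One preliminary: in all of (i)--(iv) one has $V(u):=\Pr(\norm{X_0}>u)\in\RV_{-\alpha}$ --- immediate for (i)--(iii) (Proposition~\ref{P:RV} applied to $X_1\eqd X_0$, resp.\ the hypothesis of (ii)), and for (iv) obtained from a short Karamata-type argument on $\Pr(\norm{X_0}>uy)/\Pr(\norm{X_0}>u)\to\Pr(Y\norm{\Theta_0}>y)$ together with the product form $Y\perp(\Theta_t)_t$ and the uniqueness of the coordinate-$0$ polar decomposition of a random sequence; the same argument gives $\norm{\Theta_0}=1$ almost surely in (iv).

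\emph{Construction of $\nu$.} I claim that for each $m\ge 0$ the measure $V(u)^{-1}\Pr[(X_t/u)_{|t|\le m}\in\,\cdot\,]$ converges in $\Mzero(\Banach^{2m+1})$ to some $\nu_m$ with $\nu_m(\{\norm{y_0}>1\})=1$. Starting from (i): by stationarity $(X_{-m},\dots,X_m)$ is regularly varying with index $\alpha$, with exponent measure $\mu^{(m)}$; its coordinate marginals all coincide (stationarity), their sum dominates $\mu^{(m)}(\{\norm{y}>1\})>0$, and $\Pr(\norm{(X_{-m},\dots,X_m)}>u)\le(2m+1)V(u)$, so homogeneity of $\mu^{(m)}$ forces $0<\mu^{(m)}(\{\norm{y_0}>1\})<\infty$ and $\mu^{(m)}(\{\norm{y_0}=1\})=0$; the normalizing functions are then asymptotically proportional and $\nu_m:=\mu^{(m)}(\{\norm{y_0}>1\})^{-1}\mu^{(m)}$ is the stated limit. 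Starting from (iv): the restriction of $V(u)^{-1}\Pr[(X_t/u)_{|t|\le m}\in\,\cdot\,]$ to $\{\sup_{|t|\le m}\norm{y_t}>\eps\}$ has limiting mass at most $(2m+1)\eps^{-\alpha}$ (subadditivity, stationarity, $\RV_{-\alpha}$), and weak convergence there follows by inclusion--exclusion from (iv) applied at the thresholds $u\eps$. The $\nu_m$ are consistent, hence glue to a measure $\nu$ on $\Banach^\ZZ$; $\nu$ is homogeneous by $\RV_{-\alpha}$, shift-invariant because $(X_t)_t$ is stationary, and $\nu(\{\norm{y_0}>1\})=1$.

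\emph{From $\nu$ to (ii), (iii), (iv).} Polar-decomposing the probability measure $\nu|_{\{\norm{y_0}>1\}}$ along $(y_t)_t\mapsto(\norm{y_0},(y_t/\norm{y_0})_t)$ yields, by homogeneity, a $\Pareto(\alpha)$ radial part independent of an angular part supported on $\{\norm{y_0}=1\}$; let $(\Theta_t)_{t\in\ZZ}$ be distributed as the angular part, so $\norm{\Theta_0}=1$ a.s. Since $\{\norm{X_0}>u\}=\{\norm{X_0/u}>1\}$, one has $\law((X_t/u)_{|t|\le m}\mid\norm{X_0}>u)=V(u)^{-1}\Pr[(X_t/u)_{|t|\le m}\in\,\cdot\,,\ \norm{X_0}>u]\dto\nu_m|_{\{\norm{y_0}>1\}}$, and the continuous mapping theorem applied to $(y_t)_t\mapsto(y_t/\norm{y_0})_t$ gives (ii). Then (ii)$\Rightarrow$(iii) repeats the computation from the proof of Proposition~\ref{P:RV}(ii)$\Rightarrow$(iii) (using $V\in\RV_{-\alpha}$ and Lemma~\ref{L:product}), and (iii)$\Rightarrow$(iv) is the continuous mapping theorem for $(r,(\theta_t)_t)\mapsto(r\theta_t)_t$. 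The process $(\Theta_t)_t$ is the same across (ii)--(iv), being carried unchanged along these steps and agreeing with the one built from (i) by uniqueness of limits.

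\emph{The measures $\mu_k$, and closure to (i).} Put $\mu_k:=\nu\circ\pi_{\{1,\dots,k\}}^{-1}$ on $\Banach^k\setminus\{0\}$: it lies in $\Mzero(\Banach^k)$ (mass $\le(2k+1)\eps^{-\alpha}$ on $\{\norm{y}>\eps\}$), is homogeneous and nonzero, and satisfies $V(u)^{-1}\Pr[(X_1/u,\dots,X_k/u)\in\,\cdot\,]\to\mu_k$ --- which, as $V\in\RV_{-\alpha}$, is precisely regular variation of $(X_1,\dots,X_k)$, so (iv)$\Rightarrow$(i). For the explicit formula, record first that $\nu|_{\{\norm{y_0}>1\}}=\law((Y\Theta_t)_{t\in\ZZ})$ (read off the limit in (iv)), hence by homogeneity $\nu|_{\{\norm{y_0}\ne0\}}=\int_0^\infty\Pr[(r\Theta_t)_{t\in\ZZ}\in\,\cdot\,]\,d(-r^{-\alpha})$; in particular $\E[\norm{\Theta_l}^\alpha]$ is the $\nu$-mass of $\{\norm{y_0}\ne0,\ \norm{y_l}>1\}$, hence at most $\nu(\{\norm{y_l}>1\})=\nu(\{\norm{y_0}>1\})=1$ by shift-invariance, so the right-hand side of \eqref{E:Theta2muk} is finite. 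Now for $f\in\Czero(\Banach^k)$, split $\int f(y_1,\dots,y_k)\,d\nu$ according to the first index $j\in\{1,\dots,k\}$ with $y_j\ne0$ (well defined, as $f$ vanishes near the origin of $\Banach^k$); shift the $j$-th summand by $-j$ using shift-invariance of $\nu$, so that this coordinate sits at time $0$ and those at times $-j+1,\dots,-1$ vanish; and evaluate it against $\nu|_{\{\norm{y_0}\ne0\}}=\int_0^\infty\Pr[(r\Theta_t)\in\,\cdot\,]\,d(-r^{-\alpha})$ --- the outcome is exactly the $j$-th term of \eqref{E:Theta2muk}. The delicate steps are the two structural properties of $\nu$ on which everything rests: its shift-invariance, and the bridge identity $\nu|_{\{\norm{y_0}\ne0\}}=\int_0^\infty\Pr[(r\Theta_t)_{t\in\ZZ}\in\,\cdot\,]\,d(-r^{-\alpha})$ linking the exponent-measure and spectral-process viewpoints; with those in hand, \eqref{E:Theta2muk} comes out in the stated form precisely because the correct slicing of $\nu$ is by the first \emph{nonzero} coordinate, not the first large one. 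A secondary, easily-missed point is extracting $\RV_{-\alpha}$ of $u\mapsto\Pr(\norm{X_0}>u)$ from hypothesis (iv) alone.
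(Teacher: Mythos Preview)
Your approach is correct but genuinely different from the paper's. The paper never constructs a tail measure on $\Banach^\ZZ$; it proves the cycle $(i)\Rightarrow(ii)\Rightarrow(iii)\Rightarrow(i)$ directly at the finite-stretch level. For $(i)\Rightarrow(ii)$ it simply restricts the exponent measure $\mu_k$ of $(X_{-s},\ldots,X_t)$ to $\{\norm{x_0}>1\}$ and reads off the law of $(\Theta_{-s},\ldots,\Theta_t)$; Kolmogorov extension is applied only to these probability laws, not to a $\sigma$-finite $\nu$. For $(iii)\Rightarrow(i)$ and the formula \eqref{E:Theta2muk}, the paper decomposes the support of $f\in\Czero(\Banach^k)$ according to the \emph{first coordinate exceeding a threshold} $z$, shifts by stationarity of $(X_t)$, applies (iii), and only then lets $z\downarrow 0$ to turn the indicator $\1(\max_{-j+1\le i\le -1}\norm{r\Theta_i}<z)$ into $\1(\max_{-j+1\le i\le -1}\norm{\Theta_i}=0)$ by dominated convergence. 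Your route---building a shift-invariant homogeneous $\nu$, identifying $\nu|_{\{y_0\ne 0\}}=\int_0^\infty\Pr[(r\Theta_t)\in\cdot]\,d(-r^{-\alpha})$, and decomposing $\mu_k=\nu\circ\pi^{-1}$ by the \emph{first nonzero coordinate}---is cleaner conceptually (no auxiliary threshold, and the shift-invariance of $\nu$ does the work that stationarity of $(X_t)$ does in the paper) and aligns with the tail-measure framework that became standard later. The paper's route is more elementary: it avoids gluing $\sigma$-finite measures on a projective system, which in your sketch needs a word of justification (restrict to $\{\norm{y_0}>1\}$, glue the resulting probability measures, then extend by homogeneity). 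Your inclusion--exclusion extraction of $\nu$ from (iv) is workable but, once unpacked, is doing essentially the same decomposition-and-shift computation as the paper's $(iii)\Rightarrow(i)$; the apparent economy is partly notational. Your observation that $\norm{\Theta_0}=1$ and $V\in\RV_{-\alpha}$ must be argued from (iv) alone is well taken; the paper handles $(iv)\Rightarrow(ii)$ in one line via the continuous mapping theorem and leaves this point implicit.
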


The limit process $(\Theta_t)_{t \in \ZZ}$ is called the \defn{spectral process} of $(X_t)_{t \in \ZZ}$. The spectral process provides a unifying concept for a great variety of tail-related objects, see the examples of its use in Section~\ref{S:uses}.

Note that $\Theta_0$ is an $\sphere$-valued random element with law equal to the spectral measure of the common distribution of the random elements $X_t$. The process
\begin{equation}
\label{E:TailProcess}
  (Y_t)_{t \in \ZZ} = (Y \Theta_t)_{t \in \ZZ}
\end{equation}
in item~(iv) is called the \defn{tail process} of $(X_t)_{t \in \ZZ}$. Since $\norm{\Theta_0} = 1$, we have $Y = \norm{Y_0}$ and $\Theta_t = Y_t / \norm{Y_0}$ for all $t \in \ZZ$.

\begin{proof}[Proof of Theorem~\ref{T:spectral}]
\textit{(i) implies (ii).} By stationarity of $(X_t)_{t \in \ZZ}$ and regular variation of $(X_1, \ldots, X_k)$ in $\Banach^k$, it is not difficult to see that the limit
\[
  \lim_{u \to \infty} \frac{\Pr(\norm{X_0} > u)}{\Pr[ \max( \norm{X_1}, \ldots, \norm{X_k} ) ]}
\]
exists and is in $[1/k, 1]$ for every integer $k \ge 1$. As a consequence, a valid choice for the auxiliary function in the definition of regular variation of $(X_1, \ldots, X_k)$ is just $u \mapsto \Pr(\norm{X_0} > u)$, independently of $k$, yielding
\begin{equation}
\label{E:muk}
  \frac{1}{\Pr(\norm{X_0} > u)} \, \Pr[ (X_1/u, \ldots, X_k/u) \in \, \cdot \,] \to \mu_k \qquad (u \to \infty)
\end{equation}
in $\Mzero(\Banach^k)$. With this normalization, the limit measure $\mu_k$ satisfies
\[
  \mu_k( \{ (x_1, \ldots, x_k) \in \Banach^k : \norm{x_j} > 1 \} ) = 1, \qquad j \in \{1, \ldots, k\}.
\]

Let $s, t$ be nonnegative integers and write $k = t+s+1$. Put 
\begin{equation}
\label{E:sphere:st}
  \sphere_{s, t} = \{ (\theta_{-s}, \ldots, \theta_t) \in \Banach^k : \norm{\theta_0} = 1 \}
\end{equation}
and define a probability measure $\lambda_{s,t}$ on $\sphere_{s,t}$ by
\begin{multline}
\label{E:must2lambdast}
  \lambda_{s, t}(B) = \mu_k( \{ (x_{-s}, \ldots, x_t) \in \Banach^k : \\
  \norm{x_0} > 1, \, (x_{-s}/\norm{x_0}, \ldots, x_t/\norm{x_0}) \in B \} )
\end{multline}
for Borel subsets $B \subset \sphere_{s,t}$. Let $g : \sphere_{s, t} \to \RR$ be bounded and continuous and define $f : \Banach^k \to \RR$ by 
\[
  f(x_{-s}, \ldots, x_t) = g(x_{-s} / \norm{x_0}, \ldots, x_t / \norm{x_0}) \, \1 (\norm{x_0} > 1) 
\]
to be interpreted as $0$ if $x_0 = 0$. The function $f$ is bounded, vanishes on the unit ball in $\Banach^k$, and is continuous everywhere except perhaps on $\sphere_{s,t}$, which is a $\mu_k$-null set. As a consequence, by Lemma~\ref{L:CMT},
\begin{multline*}
  \E [ g(X_{-s} / \norm{X_0}, \ldots, X_t / \norm{X_0}) \mid \norm{X_0} > u ] \\
  = \frac{1}{\Pr(\norm{X_0} > u)} \E [ f(X_{-s}/u, \ldots, X_t/u) ] \\
  \to \int_{\Banach^k} f \, d\mu_k = \int_{\sphere_{s,t}} g \, d\lambda_{s,t} \qquad (u \to \infty).
\end{multline*}
That is, if $(\Theta_{-s}, \ldots, \Theta_t)$ is a random element of $\sphere_{s,t}$ with distribution $\lambda_{s,t}$, then
\[
  \law( X_{-s} / \norm{X_0}, \ldots, X_t / \norm{X_0} \mid \norm{X_0} > u )
  \dto (\Theta_{-s}, \ldots, \Theta_t), \qquad (u \to \infty).
\]
By the Daniell--Kolmogorov extension theorem \citep[Chapter~4, Theorem~53]{POL02}, there exists a random element $(\Theta_t)_{t \in \ZZ}$ in $\Banach^\ZZ$ such that the distribution of $(\Theta_{-s}, \ldots, \Theta_t)$ is $\lambda_{s,t}$ for all nonnegative integers $s$ and $t$. Weak convergence of finite stretches characterizing weak convergence in the product space $\Banach^\ZZ$ \citep[Theorem~1.4.8]{VDV96}, statement~(ii) follows.

\textit{(ii) implies (iii).} Let $s$ and $t$ be nonnegative integers, let $y \ge 1$ and let $g : \sphere_{s,t} \to \RR$ be continuous and bounded, with $\sphere_{s,t}$ as in \eqref{E:sphere:st}. We have
\begin{multline*}
  \E [ \1 ( \norm{X_0} / u > y ) \, g(X_{-s} / \norm{X_0}, \ldots, X_t / \norm{X_0}) \mid \norm{X_0} > u ] \\
  = \frac{\Pr(\norm{X_0} > uy)}{\Pr(\norm{X_0} > u)} \,
  \E [ g(X_{-s} / \norm{X_0}, \ldots, X_t / \norm{X_0}) \mid \norm{X_0} > uy ] \\
  \to y^{-\alpha} \, \E [ g(\Theta_{-s}, \ldots, \Theta_t) ] \qquad (u \to \infty).
\end{multline*}
In view of Lemma~\ref{L:product}, we find, as $u \to \infty$,
\[
  \law \bigl( \norm{X_0} / u, X_{-s} / \norm{X_0}, \ldots, X_t / \norm{X_0} \mid \norm{X_0} > u \bigr)
  \dto \bigl(Y, \Theta_{-s}, \ldots, \Theta_t),
\]
with $Y$ a $\Pareto(\alpha)$ random variable independent of $(\Theta_{-s}, \ldots, \Theta_t)$. Statement (iii) follows.

\textit{(iii) implies (i) and \eqref{E:Theta2muk}.} Let $k$ be a positive integer, and let $f \in \Czero(\Banach^k)$. There exists $z > 0$ such that $f$ vanishes on the ball $B_{0,z}$ in $\Banach^k$, that is, $f(x_1, \ldots, x_k) = 0$ whenever $\norm{x_j} < z$ for all $j \in \{1, \ldots, k\}$. Put $V(u) = \Pr(\norm{X_0} > u)$, a function which by (iii) is in $\RV_{-\alpha}$. Decomposing the event $\{ \max_{1 \le j \le k} \norm{X_j} > uz \}$ according to the smallest $j$ such that $\norm{X_j} > uz$, we find
\begin{align*}
  \lefteqn{
  \frac{1}{V(u)} \E [ f(X_1/u, \ldots, X_k/u) ] 
  } \\
  &= \frac{1}{V(u)} \sum_{j=1}^k \E \biggl[ f(X_1/u, \ldots, X_k/u) \, \1 \biggl( \norm{X_j} > uz \ge \max_{1 \le i \le j-1} \norm{X_i} \biggr) \biggr] \\
  &= \frac{V(uz)}{V(u)} \sum_{j=1}^k 
    \E \biggl[ f(X_1/u, \ldots, X_k/u) \, \1 \biggl( \max_{1 \le i \le j-1} \norm{X_i} < uz \biggr) \, \bigg| \, \norm{X_j} > uz \biggr].
\end{align*}
By stationarity, the terms in the final sum in the above display are equal to
\begin{align*}
  \E \biggl[ f(X_{1-j}/u, \ldots, X_{k-j}/u) \, \1 \biggl( \max_{-j+1 \le i \le -1} \norm{X_i} < uz \biggr) \, \bigg| \, \norm{X_0} > uz \biggr]
\end{align*}
for $j \in \{1, \ldots, k\}$. Writing 
\[
  \frac{X_i}{u} = z \, \frac{\norm{X_0}}{uz} \, \frac{X_i}{\norm{X_0}},
\]
we find, by (iii) and by continuity of the $\Pareto(\alpha)$ distribution,
\begin{multline*}
  \lim_{u \to \infty} \frac{1}{V(u)} \E [ f(X_1/u, \ldots, X_k/u) ] = \\
  z^{-\alpha} \sum_{j=1}^k \int_1^\infty \E \biggl[ f(zy\Theta_{1-j}, \ldots, zy\Theta_{k-j}) \1 \biggl( \max_{-j+1 \le i \le -1} \norm{y \Theta_i} < 1 \biggr) \biggr]
  d(-y^{-\alpha}).
\end{multline*}
The substitution $r = zy$ and the fact that $f$ vanishes on the ball $B_{0,z}$ in $\Banach^k$ yield
\begin{align*}
  \lefteqn{
  \lim_{u \to \infty} \frac{1}{V(u)} \E [ f(X_1/u, \ldots, X_k/u) ] 
  } \\
  &= \sum_{j=1}^k \int_z^\infty \E \biggl[ f(r\Theta_{1-j}, \ldots, r\Theta_{k-j}) \, \1 \biggl( \max_{-j+1 \le i \le -1} \norm{r \Theta_i} < z \biggr) \biggr] \, d(-r^{-\alpha}) \\
  &= \sum_{j=1}^k \int_0^\infty \E \biggl[ f(r\Theta_{1-j}, \ldots, r\Theta_{k-j}) \, \1 \biggl( \max_{-j+1 \le i \le -1} \norm{r \Theta_i} < z \biggr) \biggr] \, d(-r^{-\alpha}).
\end{align*}
Since this is true for all positive $z$ in a neighbourhood of zero, we obtain, by dominated convergence,
\begin{multline*}
  \lim_{u \to \infty} \frac{1}{V(u)} \E [ f(X_1/u, \ldots, X_k/u) ] = \\
  \sum_{j=1}^k \int_0^\infty \E \biggl[ f(0, \ldots, 0, r\Theta_0, \ldots, r\Theta_{k-j}) \, \1 \biggl( \max_{-j+1 \le i \le -1} \norm{\Theta_i} = 0 \biggr) \biggr] \, d(-r^{-\alpha}).
\end{multline*}
We obtain (iii) as well as \eqref{E:Theta2muk}. 

\textit{(iii) implies (iv)}, and \textit{(iv) implies (ii).} By applications of the continuous mapping theorem.
\end{proof}

\begin{example}[Asymptotic independence]
Assume that the common distribution of the random elements $X_t$ is regularly varying with spectral measure $\lambda$ and that $\norm{X_0}$ and $\norm{X_t}$ are asymptotically independent for each nonzero integer $t$ in the sense that
\[
  \lim_{u \to \infty} \Pr( \norm{X_t} > u \mid \norm{X_0} > u ) = 0.
\]
Then the time series $(X_t)_{t \in \ZZ}$ is regularly varying with spectral process $(\Theta_t)_{t \in \ZZ}$ with $\law(\Theta_0) = \lambda$ and $\Theta_t = 0$ almost surely for every $t \in \ZZ \setminus \{0\}$.
\end{example}

\section{The time-change formula}
\label{S:TimeChange}

In general, the spectral process $(\Theta_t)_{t \in \ZZ}$ of a stationary regularly varying time series $(X_t)_{t \in \ZZ}$ is itself nonstationary. Still, the fact that $(X_t)_{t \in \ZZ}$ is stationary induces a peculiar structure on the distribution of the spectral process. In particular, the distribution of $(\Theta_t)_{t \in \ZZ}$ is determined by the distribution of its restriction to the nonnegative time axis, that is, of the \defn{forward spectral process} $(\Theta_t)_{t \in \ZZ_+}$, with $\ZZ_+ = \{0, 1, 2, \ldots \}$. The same is true for the \defn{backward spectral process} $(\Theta_t)_{t \in \ZZ_-}$, with $\ZZ_- = \{0, -1, -2, \ldots \}$.

\begin{theorem}%[Time-change formula]
\label{T:time}
Statements (ii)--(iv) in Theorem~\ref{T:spectral} are equivalent to the same statements with $\ZZ$ replaced by $\ZZ_+$ or $\ZZ_-$. In that case, 
\begin{equation}
\label{E:time}
  \E [ f(\Theta_{-s}, \ldots, \Theta_t) ]
  = \E \biggl[ f \biggl( \frac{\Theta_0}{\norm{\Theta_s}}, \ldots, \frac{\Theta_{t+s}}{\norm{\Theta_s}} \biggr) \, \norm{\Theta_s}^\alpha \biggr]
\end{equation}
for all nonnegative integer $s$ and $t$ and for all integrable functions $f : \Banach^{t+s+1} \to \RR$ that have the property that $f(\theta_{-s}, \ldots, \theta_t) = 0$ whenever $\theta_{-s} = 0$.
\end{theorem}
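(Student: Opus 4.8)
The plan is to treat the statement in three parts: the routine implication, the derivation of \eqref{E:time} under joint regular variation, and the passage from the forward spectral process back to joint regular variation; the last part carries all the weight. \emph{Part 1 (reduction).} If one of (ii)--(iv) of Theorem~\ref{T:spectral} holds with index set $\ZZ$, then composing the stated convergence with the continuous coordinate projections $\Banach^\ZZ\to\Banach^{\ZZ_+}$ and $\Banach^\ZZ\to\Banach^{\ZZ_-}$ gives the one-sided versions immediately. Conversely, within each of $\ZZ$, $\ZZ_+$, $\ZZ_-$ the statements (ii)--(iv) are mutually equivalent by the same continuous-mapping and product arguments used at the end of the proof of Theorem~\ref{T:spectral}, and (i) is equivalent to (ii) with $\ZZ$ by that theorem; so everything reduces to showing that (ii) with $\ZZ_+$ implies joint regular variation, the $\ZZ_-$ case then following by applying the result to $(X_{-t})_{t\in\ZZ}$.

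\emph{Part 2 (the formula, assuming $(X_t)_{t\in\ZZ}$ jointly regularly varying).} Fix $s,t\ge0$ and $k=s+t+1$. The engine is that, by stationarity, the limit measure in \eqref{E:muk} is the rescaled limiting law of \emph{any} block of $k$ consecutive observations: writing $\mu_{[a,b]}$ for the limit of $\Pr[(X_a/u,\ldots,X_b/u)\in\,\cdot\,]/\Pr(\norm{X_0}>u)$, it depends only on $b-a$, so $\mu_{[-s,t]}$ and $\mu_{[0,s+t]}$ are the same Borel measure on $\Banach^k$ once coordinates are relabelled through the shift $i\mapsto i+s$. Restricted to $\{x:x_0\ne0\}$, the measure $\mu_{[0,m]}$ admits the polar decomposition $\int h\,d\mu_{[0,m]}=\int_0^\infty\E[h(r\Theta_0,\ldots,r\Theta_m)]\,d(-r^{-\alpha})$ for every measurable $h$ vanishing on $\{x_0=0\}$ — this is \eqref{E:lambda2mu:f} applied to the block, with $\lambda_{0,m}=\law(\Theta_0,\ldots,\Theta_m)$ as in \eqref{E:must2lambdast}. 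Taking $h(x_{-s},\ldots,x_t)=f(x_{-s}/\norm{x_0},\ldots,x_t/\norm{x_0})\1(\norm{x_0}>1)$ and using $\norm{\Theta_0}=1$ gives $\int h\,d\mu_{[-s,t]}=\E[f(\Theta_{-s},\ldots,\Theta_t)]$. Now read the same $h$ as a function of coordinates indexed by $\{0,\ldots,s+t\}$: it becomes $f(x_0/\norm{x_s},\ldots,x_{s+t}/\norm{x_s})\1(\norm{x_s}>1)$, which — and this is the one point where the hypothesis on $f$ is used — vanishes on $\{x_0=0\}$ precisely because $f$ vanishes when its first argument is $0$. Applying the decomposition to $\mu_{[0,s+t]}$, cancelling $r$ from the ratios via $\norm{r\Theta_s}=r\norm{\Theta_s}$, and integrating out $r$ by Fubini produces the weight $\norm{\Theta_s}^\alpha$, which is \eqref{E:time} for bounded continuous $f$; a monotone-class argument (approximating $\1(\theta_{-s}\ne0)$ by $\min(n\norm{\theta_{-s}},1)$) then extends it to all integrable $f$ vanishing on $\{\theta_{-s}=0\}$.

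\emph{Part 3 (forward spectral process $\Rightarrow$ joint regular variation).} It suffices to establish statement (ii) of Theorem~\ref{T:spectral} with index set $\ZZ$, since that is equivalent to (i); and as convergence in $\Banach^\ZZ$ is convergence of every finite stretch, it is enough to show $\law((X_{-s}/\norm{X_0},\ldots,X_t/\norm{X_0})\mid\norm{X_0}>u)$ converges for all $s,t\ge0$, to laws that one checks are Daniell--Kolmogorov consistent so that the two-sided $(\Theta_t)_{t\in\ZZ}$ exists. I would induct on $s$, the base case $s=0$ being the given forward convergence; testing against a bounded uniformly continuous $h$, split the conditioning event by whether $\norm{X_{-s}}>\eps u$. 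On $\{\norm{X_{-s}}>\eps u\}$, anchor the forward convergence at time $-s$: jointly $\norm{X_{-s}}/(\eps u)\dto Y$ and $X_{-s+m}/\norm{X_{-s}}\dto\Theta_m$, whence $\norm{X_0}/(\eps u)\dto Y\norm{\Theta_s}$, a law with no atom in $(0,\infty)$; since then $\{\norm{X_0}>u\}=\{\norm{X_0}/(\eps u)>1/\eps\}$ is asymptotically a continuity event, a continuous-mapping argument gives that $\law((X_{-s}/\norm{X_0},\ldots,X_t/\norm{X_0})\mid\norm{X_{-s}}>\eps u,\norm{X_0}>u)$ converges to the law of $(\Theta_0/\norm{\Theta_s},\ldots,\Theta_{s+t}/\norm{\Theta_s})$ conditioned on $Y\norm{\Theta_s}>1/\eps$, with prior probability $\Pr(\norm{X_{-s}}>\eps u\mid\norm{X_0}>u)\to\E[\norm{\Theta_s}^\alpha\wedge\eps^{-\alpha}]$ (using stationarity, $\Pr(\norm{X_0}>u)\in\RV_{-\alpha}$, and the base case); letting $\eps\downto0$ this regime contributes $\E[h(\Theta_{-s},\ldots,\Theta_t)\1(\Theta_{-s}\ne0)]$ to the limit, in agreement with \eqref{E:time}. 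On $\{\norm{X_{-s}}\le\eps u\}$ the first coordinate has norm at most $\eps$, so up to a uniform error $O(\omega_h(\eps))$ (a modulus of continuity of $h$) only the coordinates at times $-(s-1),\ldots,t$ are seen; writing $\1(\norm{X_{-s}}\le\eps u)=1-\1(\norm{X_{-s}}>\eps u)$ reduces this to the induction hypothesis for the stretch from $-(s-1)$ to $t$ minus a further instance of the previous regime, and as $\eps\downto0$ it contributes $\E[h(\Theta_{-s},\ldots,\Theta_t)\1(\Theta_{-s}=0)]$. Summing, the limit is $\E[h(\Theta_{-s},\ldots,\Theta_t)]$ for the two-sided process whose finite-dimensional laws are thereby recursively defined from the forward one, which is the required convergence.

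\emph{Main obstacle.} Parts 1 and 2 are essentially bookkeeping, the only subtle point in Part 2 being the recognition that the hypothesis on $f$ is exactly what licenses the second polar decomposition. The genuine difficulty is Part 3: the joint behaviour of the coordinates at negative times cannot be read off directly from the forward convergence, which forces the truncation-and-induction scheme and, in particular, the careful treatment of the ``double conditioning'' on $\{\norm{X_{-s}}>\eps u\}\cap\{\norm{X_0}>u\}$, where in the limit $\norm{X_0}$ is governed by the product $Y\norm{\Theta_s}$; one must also take care that the limiting functional of $h$ arises from a genuine Borel law on the non-locally-compact space $\Banach^{s+t+1}$, which it does precisely because that law is exhibited explicitly in terms of the forward spectral process.
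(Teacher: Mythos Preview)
Your proof is correct, and Parts~1 and~3 follow essentially the paper's own scheme: the same reduction, and the same induction on $s$ with a size-threshold split on $\norm{X_{-s}}$ (the paper's Claim~3), the only cosmetic difference being that the paper first establishes tightness of the conditional laws via single-coordinate convergence (its Claim~2) and then argues uniqueness, whereas you bypass tightness by explicitly exhibiting the limit as an expectation against a law built from the forward spectral process.

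Where you genuinely diverge from the paper is Part~2. The paper does \emph{not} prove the time-change formula \eqref{E:time} by comparing two polar decompositions of the limit measure; instead it obtains \eqref{E:time} as a byproduct of the limit calculation inside the induction of Part~3 (its equation \eqref{E:TimeChange}, letting the truncation parameter $r\downto 0$). Your argument---observe that stationarity forces $\mu_{[-s,t]}$ and $\mu_{[0,s+t]}$ to be the same measure, then apply the $\norm{x_0}$-based polar decomposition to each, the hypothesis $f(0,\cdot)=0$ being exactly what makes the second decomposition legitimate---is more direct and makes the geometric content of \eqref{E:time} transparent: it is literally the equality of two polar representations of one shift-invariant measure. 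The paper's route has the compensating advantage that it derives \eqref{E:time} using only the forward hypothesis (iii$_+$), since the formula emerges from the very computation that upgrades (iii$_+$) to (iii); your Part~2 assumes full joint regular variation, so logically it must come \emph{after} Part~3, though the right-hand side of \eqref{E:time} involves only forward data and can therefore still serve as the target law inside your induction. One small imprecision: the polar decomposition you invoke is not literally \eqref{E:lambda2mu:f} (which is around the block norm) but its analogue around the single coordinate $\norm{x_0}$, justified by homogeneity of $\mu_k$ and the definition \eqref{E:must2lambdast}; this is routine but worth stating.
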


The proof of Theorem~\ref{T:time} is given below. Note that by considering the time-reversed process $\tilde{X}_t = X_{-t}$, equation~\eqref{E:time} can be reversed in the obvious way.

Some examples of the time-change formula \eqref{E:time} are given in Examples~\ref{ex:JointSurvFunc} and~\ref{ex:TailDependence}. A simple case occurs when $f$ only depends on its first component, that is, when $f(\theta_{-s}, \ldots, \theta_t) = f(\theta_{-s})$ and $f(0) = 0$: equation~\eqref{E:time} then reduces to
\begin{equation}
\label{E:time:s}
  \E [ f(\Theta_{-s}) ] = E [ f ( \Theta_0 / \norm{\Theta_s} ) \, \norm{\Theta_s}^\alpha ], \qquad s \in \ZZ.
\end{equation}
This yields an expression of the distribution of $\Theta_{-s}$ in terms of the joint law of $\Theta_0$ and $\Theta_s$. In particular we find
\[
  \Pr( \Theta_{-s} \ne 0 ) = \E [ \norm{\Theta_s}^\alpha ], \qquad s \in \ZZ.
\]
If the common value in the preceding display is equal to unity, then \eqref{E:time:s} is valid for arbitrary integrable $f$, that is, without the restriction that $f(0) = 0$. 

\begin{proof}[Proof of Theorem~\ref{T:time}]
By symmetry, we only need to consider the forward case, $\ZZ_+ = \{0, 1, 2, \ldots\}$. Consider the statements~(ii) and~(iii) in Theorem~\ref{T:spectral} with $\ZZ$ replaced by $\ZZ_+$:
\begin{itemize}
\item[(ii$_+$)] The function $u \mapsto \Pr(\norm{X_0} > u)$ belongs to $\RV_{-\alpha}$ and in $\Banach^{\ZZ_+}$,
\[
  \law \bigl( (X_t / \norm{X_0})_{t \in \ZZ_+} \, \big| \, \norm{X_0} > u \bigr) \dto (\Theta_t)_{t \in \ZZ_+} \qquad (u \to \infty).
\]
\item[(iii$_+$)] In $(0, \infty) \times \Banach^{\ZZ_+}$, as $u \to \infty$,
\[
  \law \bigl( \norm{X_0} / u, (X_t / \norm{X_0})_{t \in \ZZ_+} \, \big| \, \norm{X_0} > u \bigr) \dto \bigl( Y, (\Theta_t)_{t \in \ZZ_+} \bigr),
\]
where $Y$ is a $\Pareto(\alpha)$ random variable independent from $(\Theta_t)_{t \in \ZZ_+}$.
\end{itemize}
We have to show that the statements (i)--(iv) in Theorem~\ref{T:spectral} are equivalent with each of (ii$_+$) and (iii$_+$). We already know that (i) implies (ii). Trivially, (ii) implies (ii$_+$). To show that (ii$_+$) implies (iii$_+$), just set $s = 0$ in the part of the proof of Theorem~\ref{T:spectral} that (ii) implies (iii). Since (iii) implies (i) by Theorem~\ref{T:spectral}, all that remains to be shown is that (iii$_+$) implies (iii). As before, the version of statement (iv) with $\ZZ$ replaced by $\ZZ_+$ is dealt with via the continuous mapping theorem.

\begin{claim}
\label{CL:moment}
If (iii$_+$), then for every $t \in \ZZ_+$,
\[
  \E [ \norm{\Theta_t}^\alpha ] = \lim_{r \downto 0} \lim_{u \to \infty} \Pr( \norm{X_{-t}} > ru \mid \norm{X_0} > u ).
\]
\end{claim}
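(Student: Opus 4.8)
The plan is to rewrite the conditional probability on the right through the joint law of $(X_0, X_t)$, feed this to (iii$_+$) via the continuous mapping theorem, and then finish with a direct computation involving the independent $\Pareto(\alpha)$ factor $Y$.

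First I would exploit stationarity. Shifting indices by $t$ gives $(X_{-t}, X_0) \eqd (X_0, X_t)$, so that, with $V(u) = \Pr(\norm{X_0} > u)$,
\[
  \Pr(\norm{X_{-t}} > ru \mid \norm{X_0} > u)
  = \frac{\Pr(\norm{X_0} > ru, \, \norm{X_t} > u)}{V(u)}
  = \Pr(\norm{X_t} > u \mid \norm{X_0} > ru) \cdot \frac{V(ru)}{V(u)}.
\]
By (iii$_+$) the function $V$ lies in $\RV_{-\alpha}$ (as in the argument that (iii) implies (i) in Theorem~\ref{T:spectral}), so the last factor tends to $r^{-\alpha}$ as $u \to \infty$ for every fixed $r > 0$.

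Next I would identify the limit of the first factor. Writing $w = ru$ and $\norm{X_t}/w = (\norm{X_0}/w)(\norm{X_t}/\norm{X_0})$, statement~(iii$_+$) together with the continuous mapping theorem applied to the continuous map $(y, \theta) \mapsto y \norm{\theta_t}$ on $(0,\infty) \times \Banach^{\ZZ_+}$ yields $\law(\norm{X_t}/w \mid \norm{X_0} > w) \dto Y\norm{\Theta_t}$ as $w \to \infty$. Since $Y$ is independent of $\Theta_t$ and has a continuous distribution, the law of $Y\norm{\Theta_t}$ carries no atom in $(0, \infty)$, so every positive level is a continuity point; hence for each fixed $r > 0$, $\Pr(\norm{X_t} > u \mid \norm{X_0} > ru) \to \Pr(Y\norm{\Theta_t} > 1/r)$ as $u \to \infty$ (note that $ru \to \infty$). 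Combining the two factors gives
\[
  \lim_{u \to \infty} \Pr(\norm{X_{-t}} > ru \mid \norm{X_0} > u) = r^{-\alpha}\,\Pr(Y\norm{\Theta_t} > 1/r).
\]

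To conclude I would condition on $\Theta_t$ and use $\Pr(Y > y) = y^{-\alpha} \wedge 1$ for $y > 0$, obtaining $\Pr(Y\norm{\Theta_t} > 1/r) = \E[\,(r^\alpha\norm{\Theta_t}^\alpha) \wedge 1\,]$ (the atom of $\Theta_t$ at $0$, if any, contributing nothing), whence $r^{-\alpha}\Pr(Y\norm{\Theta_t} > 1/r) = \E[\,\norm{\Theta_t}^\alpha \wedge r^{-\alpha}\,]$. Letting $r \downto 0$ and invoking monotone convergence delivers $\E[\norm{\Theta_t}^\alpha]$, which is the asserted identity (possibly with both sides equal to $+\infty$). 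The bookkeeping is routine; the one place needing care is the passage to the limit in the first factor — making sure the conditioning level $ru$ genuinely tends to infinity for fixed $r$ and that $1/r$ is a continuity point of the limit law, which is precisely why the absence of atoms of $Y\norm{\Theta_t}$ on $(0,\infty)$ is worth recording.
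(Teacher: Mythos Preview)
Your proof is correct and follows essentially the same route as the paper: the same stationarity shift, the same factorization $\Pr(\norm{X_{-t}}>ru\mid\norm{X_0}>u)=\frac{V(ru)}{V(u)}\Pr(\norm{X_t}>u\mid\norm{X_0}>ru)$, the same appeal to (iii$_+$) and continuity of the law of $Y$, arriving at $\E[\min(\norm{\Theta_t},1/r)^\alpha]$ and then monotone convergence. One small remark: your parenthetical about both sides possibly being $+\infty$ is unnecessary, since the inner limit is a limit of conditional probabilities and hence bounded by $1$ for every $r$, so in fact the argument shows $\E[\norm{\Theta_t}^\alpha]\le 1$.
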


\begin{proof}[\it Proof of Claim~\ref{CL:moment}]
Fix $t \in \ZZ_+$ and $r > 0$. Put $V(u) = \Pr(\norm{X_0} > u)$. By stationarity and (iii+), since $Y$ and $\Theta_t$ are independent and the distribution of $Y$ is continuous,
\begin{multline*}
  \Pr( \norm{X_{-t}} > ru \mid \norm{X_0} > u )
  = \frac{V(ru)}{V(u)} \Pr(\norm{X_t} > u \mid \norm{X_0} > ru) \\
  \to r^{-\alpha} \Pr(rY \norm{\Theta_t} > 1) \qquad (u \to \infty).
\end{multline*}
By independence, this is equal to
\begin{multline}
\label{E:moment:aux}
  \E \biggl[ r^{-\alpha} \int_1^\infty \1(ry \norm{\Theta_t} > 1) \, d(-y^{-\alpha}) \biggr] \\
  = \E \biggl[ \int_r^\infty \1(z \norm{\Theta_t} > 1) \, d(-z^{-\alpha}) \biggr]
  = \E [ \min(\norm{\Theta_t}, 1/r)^\alpha ].
\end{multline}
By monotone convergence, the limit as $r \downto 0$ is $\E [ \norm{\Theta_t}^\alpha ]$, as required.
\end{proof}

\begin{claim}
\label{CL:minust}
If (iii$_+$), then for every $t \in \ZZ_+$,
\[
  \law ( X_{-t} / \norm{X_0} \mid \norm{X_0} > u ) \dto \nu_t \qquad (u \to \infty),
\]
where $\nu_t$ is a probability measure on $\Banach$ given for $\nu_t$-integrable $g : \Banach \to \RR$ by
\[
  \int g \, d\nu_t = g(0) (1 - \E [ \norm{\Theta_t}^\alpha ] ) + \E [ g(\Theta_0 / \norm{\Theta_t}) \, \norm{\Theta_t}^\alpha ].
\]
(The expectation on the right is to interpreted as zero if $\norm{\Theta_t} = 0$.)
\end{claim}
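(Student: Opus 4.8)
The plan is to verify that, for every bounded continuous $g : \Banach \to \RR$,
\[
  \E[ g(X_{-t}/\norm{X_0}) \mid \norm{X_0} > u ] \to \int g \, d\nu_t \qquad (u \to \infty);
\]
this is precisely weak convergence of the conditional laws in $\Banach$, and inserting $g \equiv 1$ shows $\nu_t(\Banach) = (1 - \E[\norm{\Theta_t}^\alpha]) + \E[\norm{\Theta_t}^\alpha] = 1$, where $\E[\norm{\Theta_t}^\alpha] \in [0, 1]$ by Claim~\ref{CL:moment}, so that $\nu_t$ is a genuine probability measure. The obstruction is that hypothesis (iii$_+$) only controls the coordinates $X_0, X_1, X_2, \ldots$ conditionally on $\{ \norm{X_0} > u \}$, whereas $X_{-t}$ lies in the past; the remedy is to peel off the part of the expectation on which $\norm{X_{-t}}$ is itself large and, there, use stationarity to shift the time origin by $t$.

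Fix $\eps > 0$, pick $\delta > 0$ with $|g(x) - g(0)| < \eps$ whenever $\norm{x} < \delta$, and fix $r \in (0, \delta)$. With $V(u) = \Pr(\norm{X_0} > u)$, decompose
\[
  \E[ g(X_{-t}/\norm{X_0}) \mid \norm{X_0} > u ] = A_u^{(r)} + B_u^{(r)},
\]
where $A_u^{(r)}$ and $B_u^{(r)}$ carry the indicators $\1(\norm{X_{-t}} \le ru)$ and $\1(\norm{X_{-t}} > ru)$, respectively. On $\{ \norm{X_{-t}} \le ru \} \cap \{ \norm{X_0} > u \}$ one has $\norm{X_{-t} / \norm{X_0}} < r < \delta$, so $A_u^{(r)}$ differs from $g(0) \, \Pr( \norm{X_{-t}} \le ru \mid \norm{X_0} > u )$ by at most $\eps$. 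By the computation already carried out in the proof of Claim~\ref{CL:moment}, in particular \eqref{E:moment:aux}, $\Pr( \norm{X_{-t}} > ru \mid \norm{X_0} > u ) \to \E[ \min(\norm{\Theta_t}, 1/r)^\alpha ]$ as $u \to \infty$, and this increases to $\E[ \norm{\Theta_t}^\alpha ]$ as $r \downto 0$ by monotone convergence. This accounts for the term $g(0) (1 - \E[\norm{\Theta_t}^\alpha])$.

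For $B_u^{(r)}$, rewrite it as $V(u)^{-1} \E[ g(X_{-t} / \norm{X_0}) \, \1(\norm{X_{-t}} > ru) \, \1(\norm{X_0} > u) ]$, apply stationarity to replace $(X_{-t}, X_0)$ by $(X_0, X_t)$, and regroup:
\[
  B_u^{(r)} = \frac{V(ru)}{V(u)} \, \E[ g( X_0 / \norm{X_t} ) \, \1(\norm{X_t} > u) \mid \norm{X_0} > ru ],
\]
where $V(ru)/V(u) \to r^{-\alpha}$ since $V \in \RV_{-\alpha}$. Using $X_0 / \norm{X_t} = (X_0/\norm{X_0}) / \norm{X_t/\norm{X_0}}$ and $\{ \norm{X_t} > u \} = \{ r \, (\norm{X_0}/ru) \, \norm{X_t/\norm{X_0}} > 1 \}$, the conditional expectation is a functional of the triple $(\norm{X_0}/ru, \, X_0/\norm{X_0}, \, X_t/\norm{X_0})$, which by (iii$_+$), projected to the coordinates $0$ and $t$ and with threshold $ru$, converges in distribution as $u \to \infty$ to $(Y, \Theta_0, \Theta_t)$ with $Y$ a $\Pareto(\alpha)$ variable independent of $(\Theta_0, \Theta_t)$. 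The map $(w, \theta_0, \theta_t) \mapsto g(\theta_0/\norm{\theta_t}) \, \1( r w \norm{\theta_t} > 1 )$ (read as $0$ when $\theta_t = 0$) is bounded and is continuous off a set of limit-law probability zero: near $\theta_t = 0$ the indicator vanishes in a whole neighbourhood, while $\Pr( rY\norm{\Theta_t} = 1 ) = 0$ because $Y$ has a continuous distribution and is independent of $\Theta_t$. Hence the continuous mapping theorem gives $B_u^{(r)} \to r^{-\alpha} \, \E[ g(\Theta_0/\norm{\Theta_t}) \, \1(rY\norm{\Theta_t} > 1) ]$; integrating out $Y$ by Fubini, exactly as in \eqref{E:moment:aux}, this equals $\E[ g(\Theta_0/\norm{\Theta_t}) \, \min(\norm{\Theta_t}, 1/r)^\alpha ]$, which tends to $\E[ g(\Theta_0/\norm{\Theta_t}) \, \norm{\Theta_t}^\alpha ]$ as $r \downto 0$ by dominated convergence, a dominating function being $\norm{g}_\infty \, \norm{\Theta_t}^\alpha$, integrable because $\E[\norm{\Theta_t}^\alpha] \le 1$.

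Finally I would assemble the pieces: given $\eps$, choose $r$ small enough that, besides $r < \delta$, the gap between $\E[\min(\norm{\Theta_t},1/r)^\alpha]$ and $\E[\norm{\Theta_t}^\alpha]$ and the gap between $\E[ g(\Theta_0/\norm{\Theta_t}) \min(\norm{\Theta_t},1/r)^\alpha]$ and $\E[ g(\Theta_0/\norm{\Theta_t}) \norm{\Theta_t}^\alpha]$ are each below $\eps$; then letting $u \to \infty$ shows that $\limsup_u | \E[ g(X_{-t}/\norm{X_0}) \mid \norm{X_0} > u ] - \int g \, d\nu_t |$ is bounded by a fixed multiple of $\eps$, and $\eps \downto 0$ finishes the proof. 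I expect the only genuinely delicate point to be this interchange of the limits $u \to \infty$ and $r \downto 0$, together with the verification that the $\Pareto(\alpha)$ factor really decouples so that the jump set of the indicator is negligible under the limiting law; everything else is bookkeeping on the polar decomposition.
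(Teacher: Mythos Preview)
Your proof is correct and follows essentially the same route as the paper: split according to whether $\norm{X_{-t}} \le ru$ or $\norm{X_{-t}} > ru$, handle the small part via continuity of $g$ at $0$ together with Claim~\ref{CL:moment}, handle the large part by the stationarity shift and (iii$_+$), integrate out the Pareto factor as in \eqref{E:moment:aux}, and then let $r \downto 0$. Your write-up is in fact slightly more explicit than the paper's about why the continuous mapping theorem applies (null discontinuity set) and about the $\eps$--$r$ interchange at the end, but the argument is the same.
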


\begin{proof}[\it Proof of Claim~\ref{CL:minust}]
Let $g : \Banach \to \RR$ be continuous and bounded. Fix $r > 0$ (later on, we will take the limit as $r \downto 0$). We have
\begin{multline*}
  \E [ g(X_{-t} / \norm{X_0}) \mid \norm{X_0} > u ] \\
  \shoveleft{= g(0) \, \Pr( \norm{X_{-t}} \le ru \mid \norm{X_0} > u )} \\
  + \E [ \{ g(X_{-t} / \norm{X_0}) - g(0) \} \, \1( \norm{X_{-t}} \le ru ) \mid \norm{X_0} > u] \\
  + \E [ g(X_{-t} / \norm{X_0}) \, \1( \norm{X_{-t}} > ru ) \mid \norm{X_0} > u].
\end{multline*}
The first term on the right-hand side has been treated in Claim~\ref{CL:moment}. Secondly, if $\norm{X_0} > u$ and $\norm{X_{-t}} \le ru$, then $\norm{X_{-t} / \norm{X_0}} < r$. Since $g$ is continuous,
\begin{multline*}
  \lim_{r \downto 0} \limsup_{u \to \infty} \bigl| \E [ \{ g(X_{-t} / \norm{X_0}) - g(0) \} \, \1( \norm{X_{-t}} \le ru ) \mid \norm{X_0} > u] \bigr| \\
  \leq \lim_{r \downto 0} \sup_{x \in B_{0,r}} | g(x) - g(0) | = 0.
\end{multline*}
Thirdly, writing $V(u) = \Pr( \norm{X_0} > u )$, we have, by stationarity,
\begin{align*}
  \lefteqn{
  \E [ g(X_{-t} / \norm{X_0}) \, \1( \norm{X_{-t}} > ru ) \mid \norm{X_0} > u ] 
  } \\
  &= \frac{V(ru)}{V(u)} \E [ g(X_0 / \norm{X_t}) \, \1 (\norm{X_t} > u) \mid \norm{X_0} > ru ] \\
  &= \frac{V(ru)}{V(u)} \E \biggl[ g \biggl( \frac{X_0 / \norm{X_0}}{\norm{X_t} / \norm{X_0}} \biggr) \, 
    \1 \biggl( r \frac{\norm{X_0}}{ru} \frac{\norm{X_t}}{\norm{X_0}} > 1 \biggr) \, \bigg| \, \norm{X_0} > ru \biggr].
\end{align*}
By (iii$_+$), continuity of the law of $Y$ and independence of $Y$ and $\Theta_t$, this converges as $u \to \infty$ to
\[
  r^{-\alpha} \E [ g(\Theta_0 / \norm{\Theta_t}) \, \1 ( r Y \norm{\Theta_t} > 1 ) ]
\]
By the same argument as in \eqref{E:moment:aux}, this is equal to
\[
  \E [ g(\Theta_0 / \norm{\Theta_t}) \, \min(\norm{\Theta_t}, 1/r)^\alpha ],
\]
which tends to $\E [ g(\Theta_0 / \norm{\Theta_t}) \, \norm{\Theta_t}^\alpha ]$ as $r \downto 0$ (dominated convergence). Claim~\ref{CL:minust} is thereby established.
\end{proof}

Fix nonnegative integer $s$ and $t$. If (iii$_+$), then in view of Claim~\ref{CL:minust}, the converse half of Prohorov's theorem \citep[Theorem~6.2]{BIL68} and Tychonoff's theorem, there exists $u_0 > 0$ such that the collection of probability measures
\begin{equation}
\label{E:laws:u}
  \law \bigl( X_{-s} / \norm{X_0}, \ldots, X_t / \norm{X_0} \mid \norm{X_0} > u \bigr), \qquad u > u_0,
\end{equation}
is \defn{tight}, that is, for every $\eps > 0$ there exists a compact subset $K_\eps$ of $\Banach^{t+s+1}$ so that the probability mass of $K_\eps$ under each of the laws above is at least $1 - \eps$. By the direct half of Prohorov's theorem \citep[Theorem~6.1]{BIL68}, the collection of probability measures above is \defn{relatively compact}: for every sequence $u_n \to \infty$ there exists a subsequence $u_{n_m} \to \infty$ for which the laws have a limit in distribution. To prove convergence in distribution of \eqref{E:laws:u} as $u \to \infty$, it is then sufficient to show uniqueness of the possible sequential limits. As probability distributions are determined by their integrals of bounded, Lipschitz continuous functions \citep[proof of Theorem~1.3]{BIL68}, it is sufficient to show the following claim.

\begin{claim}
\label{CL:uniqueness}
If (iii$_+$), then for every nonnegative integer $s$ and $t$ and for every bounded, Lipschitz continuous function $f : \Banach^{t+s+1} \to \RR$, the following limit exists:
\begin{equation}
\label{E:limit2exist}
  \lim_{u \to \infty} \E [ f( X_{-s}/ \norm{X_0}, \ldots, X_t / \norm{X_0}) \mid \norm{X_0} > u].
\end{equation}
\end{claim}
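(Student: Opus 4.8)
Here is how I would prove Claim~\ref{CL:uniqueness}.

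\textit{Strategy.} My plan is to prove Claim~\ref{CL:uniqueness} by induction on $s$, the number of negatively indexed coordinates, for arbitrary $t$. The obstruction is that (iii$_+$) --- and the weak convergence $\law\bigl((X_i/\norm{X_0})_{i\ge0}\mid\norm{X_0}>u\bigr)\dto(\Theta_i)_{i\ge0}$ it contains --- only describe the \emph{forward} coordinates, whereas the functional in \eqref{E:limit2exist} also involves $X_{-s},\ldots,X_{-1}$. For $s=0$ there is nothing to do: $f(X_0/\norm{X_0},\ldots,X_t/\norm{X_0})$ is a bounded continuous function of finitely many forward coordinates, so by (iii$_+$) the limit in \eqref{E:limit2exist} exists and equals $\E[f(\Theta_0,\ldots,\Theta_t)]$.

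\textit{Inductive step.} I would fix $\eps\in(0,1)$ and split the conditioning event according to whether $\norm{X_{-s}}\le\eps u$ or $\norm{X_{-s}}>\eps u$. On $\{\norm{X_0}>u,\ \norm{X_{-s}}\le\eps u\}$ one has $\norm{X_{-s}/\norm{X_0}}<\eps$, so Lipschitz continuity of $f$ (with constant $L$, say) permits replacing its first argument by $0$ with an error of modulus at most $L\eps$, uniformly in $u$; together with $\1(\norm{X_{-s}}\le\eps u)=1-\1(\norm{X_{-s}}>\eps u)$, this reduces $\E[f\mid\norm{X_0}>u]$, up to an error of modulus $\le L\eps$, to a combination of the single term $\E[f(0,X_{-s+1}/\norm{X_0},\ldots,X_t/\norm{X_0})\mid\norm{X_0}>u]$ and two expectations of the form
\[
  \E\bigl[g(X_{-s}/\norm{X_0},\ldots,X_t/\norm{X_0})\,\1(\norm{X_{-s}}>\eps u)\bigm|\norm{X_0}>u\bigr],
\]
where $g\colon\Banach^{t+s+1}\to\RR$ is bounded and Lipschitz (once $g=f$, once $g(\theta_{-s},\ldots,\theta_t)=f(0,\theta_{-s+1},\ldots,\theta_t)$). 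The first term involves only $X_{-(s-1)},\ldots,X_t$, so its limit exists by the induction hypothesis. For a term of the displayed form, stationarity (shifting the time index by $s$) lets me rewrite it as
\[
  \frac{V(\eps u)}{V(u)}\,\E\bigl[g(X_0/\norm{X_s},\ldots,X_{t+s}/\norm{X_s})\,\1(\norm{X_s}>u)\bigm|\norm{X_0}>\eps u\bigr],
\]
where $V(u)=\Pr(\norm{X_0}>u)$ and every index is now nonnegative. Since $V\in\RV_{-\alpha}$ by (iii$_+$), the prefactor tends to $\eps^{-\alpha}$; expressing $X_i/\norm{X_s}$ through $X_i/\norm{X_0}$ and $\norm{X_s}/\norm{X_0}$, and $\1(\norm{X_s}>u)$ through $\norm{X_s}/\norm{X_0}$ and $\norm{X_0}/(\eps u)$, an application of (iii$_+$) and the continuous mapping theorem makes the conditional expectation converge to $\E[g(\Theta_0/\norm{\Theta_s},\ldots,\Theta_{t+s}/\norm{\Theta_s})\,\1(\norm{\Theta_s}Y>1/\eps)]$; integrating out the $\Pareto(\alpha)$ variable $Y$ as in \eqref{E:moment:aux} turns the product with $\eps^{-\alpha}$ into $\E[g(\Theta_0/\norm{\Theta_s},\ldots,\Theta_{t+s}/\norm{\Theta_s})\,\min(\norm{\Theta_s},1/\eps)^\alpha]$, read as $0$ on $\{\Theta_s=0\}$. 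Thus, for our fixed $\eps$, $\E[f\mid\norm{X_0}>u]$ lies within $L\eps$, uniformly in $u$, of a quantity that converges as $u\to\infty$, so $\limsup$ and $\liminf$ in \eqref{E:limit2exist} differ by at most $2L\eps$; since $\eps$ is arbitrary, the limit exists and the induction closes. Passing $\eps\downto0$ through the limiting expressions as well, and using $\E\norm{\Theta_s}^\alpha<\infty$ from Claim~\ref{CL:moment}, would moreover identify the limit and yield the time-change formula~\eqref{E:time}.

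\textit{Main obstacle.} The hard part is the passage from a ``backward'' to a ``forward'' conditioning event: the stationarity shift accomplishes this but rescales the threshold from $u$ down to $\eps u$, which is why the whole argument has to be organized around the split at level $\eps u$ together with an induction rather than a single appeal to (iii$_+$), and why the resulting $\eps^{-\alpha}$ factor must be balanced against the boundedness of $g$ and the finiteness of $\E\norm{\Theta_s}^\alpha$. A secondary technical point will be the use of the continuous mapping theorem for the map $(y,(\vartheta_i)_i)\mapsto g(\vartheta_0/\norm{\vartheta_s},\ldots,\vartheta_{t+s}/\norm{\vartheta_s})\,\1(\norm{\vartheta_s}y>1/\eps)$: its discontinuity set is contained in $\{\norm{\vartheta_s}y=1/\eps\}$ --- near $\{\vartheta_s=0\}$ the map is identically $0$, hence continuous there --- and this set is null for the law of $(Y,(\Theta_i)_i)$ because $Y$ has a continuous distribution, so $\Pr(\norm{\Theta_s}Y=1/\eps)=0$.
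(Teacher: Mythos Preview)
Your proposal is correct and follows essentially the same route as the paper: induction on $s$, a split of the conditioning event at level $\eps u$ (the paper writes $r$ for your $\eps$), the Lipschitz estimate on the part $\{\norm{X_{-s}}\le\eps u\}$, and the stationarity shift combined with (iii$_+$) on the part $\{\norm{X_{-s}}>\eps u\}$, followed by $\eps\downto 0$. The only cosmetic difference is that the paper first replaces $f$ by $f_0(\theta_{-s},\ldots,\theta_t)=f(\theta_{-s},\ldots,\theta_t)-f(0,\theta_{-s+1},\ldots,\theta_t)$, which vanishes when its first argument is $0$, so only one ``displayed form'' term appears instead of your two; by linearity this is the same computation.
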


\begin{proof}[\it Proof of Claim~\ref{CL:uniqueness}]
We proceed by induction on $s$. If $s = 0$, there is nothing to prove, for (iii$_+$) already states convergence in distribution as $u \to \infty$.

Let $s \ge 1$ be integer and assume the stated convergence holds for $s-1$, all nonnegative integer $t$, and all bounded, Lipschitz continuous functions from $\Banach^{t+s}$ into $\RR$. Let $f : \Banach^{t+s+1} \to \RR$ be bounded and Lipschitz continuous. Define $f_0 : \Banach^{t+s+1} \to \RR$ by
\[
  f_0(\theta_{-s}, \ldots, \theta_t) = f(\theta_{-s}, \ldots, \theta_t) - f(0, \theta_{-s+1}, \ldots, \theta_t).
\]
We have
\begin{multline*}
  \E [ f(X_{-s} / \norm{X_0}, \ldots, X_t / \norm{X_0}) \mid \norm{X_0} > u ] \\
  = \E [ f_0(X_{-s} / \norm{X_0}, \ldots, X_t / \norm{X_0}) \mid \norm{X_0} > u] \\
  + \E [ f(0, X_{-s+1}/ \norm{X_0}, \ldots, X_t / \norm{X_0}) \mid \norm{X_0} > u].
\end{multline*}
By the induction hypothesis, the limit
\[
  \lim_{u \to \infty} \E [ f(0, X_{-s+1}/ \norm{X_0}, \ldots, X_t / \norm{X_0}) \mid \norm{X_0} > u]
\]
exists. It remains to show the existence of the limit in \eqref{E:limit2exist} with $f$ replaced by $f_0$. Besides being bounded and Lipschitz continuous, the function $f_0$ has the additional property that $f_0(0, \theta_{-s+1}, \ldots, \theta_t) = 0$.

Let $r > 0$ (later on, we will take the limit as $r \downto 0$). Write
\begin{multline*}
  \E [ f_0(X_{-s} / \norm{X_0}, \ldots, X_t / \norm{X_0}) \mid \norm{X_0} > u] \\
  = \E [ f_0(X_{-s} / \norm{X_0}, \ldots, X_t / \norm{X_0}) \, \1(\norm{X_{-s}} \le ru) \mid \norm{X_0} > u] \\
  + \E [ f_0(X_{-s} / \norm{X_0}, \ldots, X_t / \norm{X_0}) \, \1(\norm{X_{-s}} > ru) \mid \norm{X_0} > u].
\end{multline*}
Let $L$ be the Lipschitz constant associated to $f_0$. Consider the first expectation on the right-hand side of the previous display. Since the integrand vanishes if $X_{-s} = 0$, this expectation is bounded by $L \, \norm{X_{-s}} / \norm{X_0} < L \, r$, which converges to $0$ as $r \downto 0$. So it remains to show existence of the limit of the last expectation in the previous display as $u \to \infty$ and then as $r \downto 0$.

Let $V(u) = \Pr(\norm{X_0} > u)$. By stationarity,
\begin{multline*}
  \E [ f_0(X_{-s} / \norm{X_0}, \ldots, X_t / \norm{X_0}) \, \1(\norm{X_{-s}} > ru) \mid \norm{X_0} > u] \\
  \shoveleft{= \frac{V(ru)}{V(u)} \E [ f_0(X_0 / \norm{X_s}, \ldots, X_{t+s} / \norm{X_s}) \, \1(\norm{X_s} > u) \mid \norm{X_0} > ru ]} \\
  \shoveleft{= \frac{V(ru)}{V(u)} \E \biggl[ f_0 \biggl( \frac{X_0 / \norm{X_0}}{\norm{X_s} / \norm{X_0}}, \ldots, \frac{X_{t+s} / \norm{X_0}}{\norm{X_s} / \norm{X_0}} \biggr)} \\
  \1 \biggl( r \frac{\norm{X_0}}{ru} \frac{\norm{X_s}}{\norm{X_0}} > 1\biggr) \, \bigg| \, \norm{X_0} > ru \biggr].
\end{multline*}
In view of (iii$_+$), this converges as $u \to \infty$ to
\[
  r^{-\alpha} \E \biggl[ f_0 \biggl( \frac{\Theta_0}{\norm{\Theta_s}}, \ldots, \frac{\Theta_{t+s}}{\norm{\Theta_s}} \biggr) \, \1 (r Y \norm{\Theta_s} > 1) \biggr].
\]
By a similar argument as the one at the end of the proof of Claim~\ref{CL:moment}, this is equal to the left-hand side in
\begin{multline}
\label{E:TimeChange}
  \E \biggl[ f_0 \biggl( \frac{\Theta_0}{\norm{\Theta_s}}, \ldots, \frac{\Theta_{t+s}}{\norm{\Theta_s}} \biggr) \, \min ( \norm{\Theta_s}, 1/r)^\alpha \biggr] \\
  \to \E \biggl[ f_0 \biggl( \frac{\Theta_0}{\norm{\Theta_s}}, \ldots, \frac{\Theta_{t+s}}{\norm{\Theta_s}} \biggr) \, \norm{\Theta_s}^\alpha \biggr] \qquad (r \downto 0),
\end{multline}
the convergence being justified by dominated convergence. This finishes the proof of Claim~\ref{CL:uniqueness}.
\end{proof}

By Claim~\ref{CL:uniqueness} and the tightness argument preceding it, if (iii$_+$) then the limit in distribution
\[
  \law \bigl( X_{-s} / \norm{X_0}, \ldots, X_t / \norm{X_0} \mid \norm{X_0} > u \bigr) \qquad (u \to \infty),
\]
exists for all nonnegative integer $s$ and $t$. By the Daniell--Kolmogorov extension theorem \citep[Chapter~4, Theorem~53]{POL02}, these limits in distributions are the `finite-dimensional' distributions of a random element $(\Theta_t)_{t \in \ZZ}$ in the product space $\Banach^\ZZ$. Statement (iii) concerning weak convergence in $\Banach^\ZZ$ then follows from the convergence in the previous display for all $s$ and $t$ together with Theorem~1.4.8 in \citet{VDV96}.

It remains to show equation~\eqref{E:time}. By \eqref{E:TimeChange}, equation~\eqref{E:time} holds if $f$ is bounded and Lipschitz continuous and vanishes on the set $\{ (\theta_{-s}, \ldots, \theta_t) \in \Banach^{t+s+1} : \theta_{-s} = 0 \}$. For a general bounded and Lipschitz continuous function $g : \Banach^{t+s+1} \to \RR$, write
\begin{align*}
  g(\Theta_{-s}, \ldots, \Theta_t) 
  &= g(\Theta_{-s}, \ldots, \Theta_t) - g(0, \Theta_{-s+1}, \ldots, \Theta_t) \\
  &\quad \mbox{} + g(0, \Theta_{-s+1}, \ldots, \Theta_t) - g(0, 0, \Theta_{-s+2}, \ldots, \Theta_t) \\
  &\quad \mbox{} + \ldots \\
  &\quad \mbox{} + g(0, \ldots, 0, \Theta_{-1}, \ldots, \Theta_t) - g(0, \ldots, 0, \Theta_0, \ldots, \Theta_t) \\
  &\quad \mbox{} + g(0, \ldots, 0, \Theta_0, \ldots, \Theta_t).
\end{align*}
Take expectations on both sides and apply \eqref{E:TimeChange} to the first $s$ lines of the right-hand side of the previous display at $s$ replaced by $s, s-1, \ldots, 1$, respectively, to obtain
{\small
\begin{align*}
  \lefteqn{\E [ g(\Theta_{-s}, \ldots, \Theta_t) ]} \\
  &= \E \biggl[ \biggl\{ g \biggl( \frac{\Theta_0}{\norm{\Theta_s}}, \ldots, \frac{\Theta_{t+s}}{\norm{\Theta_s}} \biggr) 
    - g \biggl( 0, \frac{\Theta_1}{\norm{\Theta_s}}, \ldots, \frac{\Theta_{t+s}}{\norm{\Theta_s}} \biggr) \biggr\} \, \norm{\Theta_s}^\alpha \biggr] \\
  &\quad \mbox{} + \E \biggl[ \biggl\{ g \biggl( 0, \frac{\Theta_0}{\norm{\Theta_{s-1}}}, \ldots, \frac{\Theta_{t+s-1}}{\norm{\Theta_{s-1}}} \biggr) 
    - g \biggl( 0, 0, \frac{\Theta_1}{\norm{\Theta_{s-1}}}, \ldots, \frac{\Theta_{t+s-1}}{\norm{\Theta_{s-1}}} \biggr) \biggr\} \, \norm{\Theta_{s-1}}^\alpha \biggr] \\
  &\quad \mbox{} + \ldots \\
  &\quad \mbox{} + \E \biggl[ \biggl\{ g \biggl( 0, \ldots, 0, \frac{\Theta_0}{\norm{\Theta_{1}}}, \ldots, \frac{\Theta_{t+1}}{\norm{\Theta_{1}}} \biggr) 
    - g \biggl( 0, \ldots, 0, \frac{\Theta_1}{\norm{\Theta_{1}}}, \ldots, \frac{\Theta_{t+1}}{\norm{\Theta_{1}}} \biggr) \biggr\} \, \norm{\Theta_{1}}^\alpha \biggr] \\
  &\quad \mbox{} + \E [ g(0, \ldots, 0, \Theta_0, \ldots, \Theta_t) ].
\end{align*}}
The equality in the preceding display being true for all bounded and Lipschitz continuous functions $g : \Banach^{t+s+1} \to \RR$, it must hold whenever $g$ is the indicator function of a closed set \citep[proof of Theorem~1.3]{BIL68} and then by a standard argument for all measurable functions $\Banach^{t+s+1} \to \RR$ that are integrable with respect to the law of $(\Theta_{-s}, \ldots, \Theta_t)$. For such functions that vanish whenever their first argument is equal to zero, the formula in the preceding display simplifies to~\eqref{E:time}.

This concludes the proof of Theorem~\ref{T:time}.
\end{proof}

\section{Use of the spectral process}
\label{S:uses}

Various aspects of the extremal dynamics of a regularly varying stationary time series can be expressed in terms of its spectral process. Here we give examples involving joint survival functions, tail dependence coefficients, the extremogram, extremal indices, and point processes. In \cite{BKS10}, the spectral process is also used to prove a functional limit theorem for partial sums of real-valued stationary time series with infinite variance. 

Let $(X_t)_{t \in \ZZ}$ be a stationary time series in a separable Banach space $\Banach$ with dual space $\dual$, that is, the linear space of bounded linear functionals $b^* : \Banach \to \RR$ equipped with the norm $\norm{b^*} = \sup \{ |b^* x| : x \in \sphere \}$. Assume $(X_t)_{t \in \ZZ}$ is regularly varying with index $\alpha$ and spectral process $(\Theta_t)_{t \in \ZZ}$. Let $(Y_t)_{t \in \ZZ}$ be the tail process of $(X_t)_{t \in \ZZ}$, see~\eqref{E:TailProcess}.

\begin{example}[Joint survival functions]
\label{ex:JointSurvFunc}
Fix an integer $t \ge 0$, an index set $I \subset \{0, \ldots, t\}$ such that $0 \in I$, and nonzero bounded linear functionals $b_i^* \in \dual$ ($i \in I$). We are interested in the joint tail behavior of the random variables $b_i^* X_i$ for $i \in I$. By conditioning on the event $\norm{X_0} > u / \norm{b_0^*}$, we find
\begin{align}
\label{E:JointSurvFunc:dual}
  \lim_{u \to \infty} \frac{\Pr( \forall i \in I : b_i^* X_i > u )}{\Pr( \norm{X_0} > u )}
  &= \norm{b_0^*}^\alpha \, \Pr ( \forall i \in I : b_i^* Y_i > \norm{b_0^*} ) \\
  &= \int_{1/\norm{b_0^*}}^\infty \Pr( \forall i \in I : r \, b_i^* \Theta_i > 1 ) \, d(-r^{-\alpha}) \nonumber \\
  &= \E [ \min \{ (b_i^* \Theta_i)_+^\alpha : i \in I \} ]. \nonumber
\end{align}
%The case $I = \{0\}$ is equation~\eqref{E:dual:tail}. 
If we had started to calculate the limit on the left-hand side of \eqref{E:JointSurvFunc:dual} by conditioning on the event $\norm{X_t} > u / \norm{b_t^*}$ instead, we would have found
\begin{equation}
\label{E:JointSurvFunc:dual:bis}
  \lim_{u \to \infty} \frac{\Pr( \forall i \in I : b_i^* X_i > u )}{\Pr( \norm{X_0} > u )}
  = \E [ \min \{ (b_i^* \Theta_{i-t})_+^\alpha : i \in I \} ].
\end{equation}
The equality of the two expectations in equations \eqref{E:JointSurvFunc:dual} and \eqref{E:JointSurvFunc:dual:bis} is a special instance of the time-change formula \eqref{E:time}.

In the same way, one can prove that for $t$ and $I$ as above and for positive constants $b_i$ ($i \in I$),
\begin{align}
\label{E:JointSurvFunc:norm}
  \lim_{u \to \infty} \frac{\Pr( \forall i \in I : b_i \norm{X_i} > u )}{\Pr( \norm{X_0} > u )}
  &= \E [ \min \{ b_i^\alpha \norm{\Theta_i}^\alpha : i \in I \} ] \\
  &= \E [ \min \{ b_i^\alpha \norm{\Theta_{i-t}}^\alpha : i \in I \} ]. \nonumber
\end{align}
\end{example}

\begin{example}[Tail dependence coefficients]
\label{ex:TailDependence}
Let $b^* \in \Banach^*$. If $\Pr(b^* \Theta_0 > 0) > 0$, then the \defn{coefficient of upper tail dependence} between $b^* X_0$ and $b^* X_h$ is given by
\begin{align}
\label{E:TailDependence:dual}
  \lim_{u \to \infty} \Pr( b^* X_h > u \mid b^* X_0 > u )
  &= \frac{\E [ \min \{ (b^* \Theta_0)_+^\alpha, \, (b^* \Theta_{h})_+^\alpha \} ]}{\E [ (b^* \Theta_0)_+^\alpha ] } \\
  &= \frac{\E [ \min \{ (b^* \Theta_0)_+^\alpha, \, (b^* \Theta_{-h})_+^\alpha \} ]}{\E [ (b^* \Theta_0)_+^\alpha ] }. \nonumber
\end{align}
This is an immediate consequence of equations \eqref{E:JointSurvFunc:dual} and \eqref{E:JointSurvFunc:dual:bis}. Likewise, by \eqref{E:JointSurvFunc:norm}, the coefficient of tail dependence between $\norm{X_0}$ and $\norm{X_h}$ is given by
\[
  \lim_{u \to \infty} \Pr( \norm{X_h} > u \mid \norm{X_0} > u ) 
  = \E [ \min(\norm{\Theta_{h}}^\alpha, 1)]
  = \E [ \min(\norm{\Theta_{-h}}^\alpha, 1)].
\]
\end{example}

\begin{example}[Extremogram]
\label{ex:Extremogram}
In \cite{DM09}, the \defn{extremogram} was introduced as an extreme-value analogue of the correllogram through
\[
  \rho_{A, B}(h) = \lim_{n \to \infty} n \, \Pr (X_0/a_n \in A, \, X_h/a_n \in B),
\]
for integer $h$ and for regions $A, B$ at least one of which stays away from the origin and where $a_n$ is a positive sequence satisfying $n \Pr( \norm{X_0} > a_n ) \to 1$ as $n \to \infty$. For instance, if $A = \{ x \in \Banach : a^* x > 1 \}$ and $B = \{ x \in \Banach : b^* x > 1 \}$ for some $a^*, b^* \in \dual$, then by \eqref{E:JointSurvFunc:dual},
\[
  \rho_{A, B}(h) 
  = \lim_{n \to \infty} n \, \Pr ( a^* X_0 > a_n, \, b^* X_h > a_n )
  = \E [ \min \{ (a^* \Theta_0)_+^\alpha, \, (b^* \Theta_h)_+^\alpha \} ].
\]
More generally, if $A$ and $B$ are continuity sets of the distributions of $Y_0$ and $Y_h$ respectively and if $A \subset \{x \in \Banach : \norm{x} > 1 \}$, then
\begin{align*}
  \rho_{A, B}(h) 
  &= \lim_{n \to \infty} \Pr (X_0/a_n \in A, \, X_h/a_n \in B \mid \norm{X_0} > a_n) \\
  &= \Pr(Y_0 \in A, \, Y_h \in B).
\end{align*}
\end{example}

\begin{example}[Extremal indices]
\label{ex:ExtremalIndex}
Let $b^* \in \dual$ be such that $\Pr(b^* \Theta_0 > 0) > 0$. Under assumptions similar to Conditions~4.1 and~4.4 in \cite{BS09}, the \defn{extremal index} \citep{LB83} of the univariate sequence $(b^* X_t)_{t \in \ZZ}$ is given by
\begin{align}
\label{E:ExtremalIndex:dual}
  \theta(b^*)
  &= \lim_{m \to \infty} \lim_{u \to \infty} \Pr \biggl( \max_{t=1, \ldots, m} b^* X_t \le u \, \bigg| \, b^* X_0 > u \biggr) \\
  &= 1 - \frac{\E [ \min \{ (b^* \Theta_0)_+^\alpha, \sup_{t \ge 1} (b^* \Theta_t)_+^\alpha \} ]}{\E [ (b^* \Theta_0)_+^\alpha ]} \nonumber \\
  &= \frac{\E [ \sup_{t \ge 0} (b^* \Theta_t)_+^\alpha - \sup_{t \ge 1} (b^* \Theta_t)_+^\alpha ]}{\E [ (b^* \Theta_0)_+^\alpha ]}. \nonumber
\end{align}
In the previous display, the second equality is justified by an argument similar to the one in \eqref{E:JointSurvFunc:dual}, whereas the last equality is a consequence of the identity $x - \min(x, y) = \max(x, y) - y$ for $x, y \in \RR$. Similarly, the extremal index of the univariate sequence $(\norm{X_t})_{t \in \ZZ}$ is given by
\begin{align}
\label{E:ExtremalIndex:norm}
  \theta 
  &= \lim_{m \to \infty} \lim_{u \to \infty} \Pr \biggl( \max_{t=1, \ldots, m} \norm{X_t} \le u \, \bigg| \, \norm{X_0} > u \biggr) \\
  &= \Pr \biggl( \sup_{t \ge 1} \norm{Y_t} \le 1 \biggr) 
  = \E \biggl[ \sup_{t \ge 0} \norm{\Theta}^\alpha - \sup_{t \ge 1} \norm{\Theta}^\alpha \biggr], \nonumber
\end{align}
see \citet[Remark~4.7]{BS09}.
\end{example}

\begin{example}[Point processes]
\label{ex:PointProcesses}
Consider the sequence of point processes
\[
  N_n = \sum_{i=1}^n \delta_{X_i/a_n},
\]
with $a_n$ a positive sequence such that $n \Pr( \norm{X_0} > a_n ) \to 1$ as $n \to \infty$. We view $N_n$ as a random point measure in $\Mzero$. For Euclidean state spaces, the limit distribution of $N_n$ has been studied in, among others, \cite{DH95}, \cite{DM98}, and \cite{BS09}. Under conditions similar to the ones of Theorem~4.5 in \cite{BS09}, it can be shown that $N_n$ converges weakly to a certain compound Poisson process $N$. The clusters in $N$ are rescaled independent copies of the \defn{cluster process} whose law is equal to
\[
  \law \biggl( \sum_t \delta_{Y_t} \, \bigg| \, \sup_{t \le -1} \norm{Y_t} \le 1 \biggr).
\]
Up to minor changes, the proof is the same as in \cite{BS09} and is omitted for brevity.
\end{example}

\section{Bounded linear operators}
\label{S:linear}

Let $\Banach_1$ and $\Banach_2$ be real, separable Banach spaces. We consider the effect of a bounded linear operator $A : \Banach_1 \to \Banach_2$ on the regular variation properties of a $\Banach_1$-valued random element $X$ (Proposition~\ref{P:linear:rv}) and a stationary time series $(X_t)_{t \in \ZZ}$ (Proposition~\ref{P:linear:series}). The case of bounded linear functionals arises when $\Banach_2 = \RR$. We do not pursue generalizations of Breiman's lemma involving operators that are themselves random; see e.g.\ \citet[Proposition~A.1]{B02b} for the finite-dimensional case.

Let $\sphere_j$ denote the unit sphere in $\Banach_j$ ($j = 1, 2$). The norm in both Banach spaces will be denoted by the symbol $\norm{\,\cdot\,}$.

\begin{proposition}[Linear transformation of random elements]
\label{P:linear:rv}
Let $X$ be a regularly varying random element in $\Banach_1$ with index $\alpha > 0$ and spectral measure $\lambda$ and let $A : \Banach_1 \to \Banach_2$ be a bounded linear operator. We have
\begin{equation}
\label{E:linear:tail}
  \frac{\Pr( \norm{AX} > u )}{\Pr(\norm{X} > u)} \to \int_{\sphere_1} \norm{A\theta}^\alpha \, \lambda(d\theta) \qquad (u \to \infty).
\end{equation}
If $\lambda( \{ \theta \in \sphere_1 : A\theta \neq 0 \} ) > 0$, this limit is positive and $AX$ is regularly varying in $\Banach_2$ with the same index $\alpha$ and with spectral measure $\lambda_A$ given by
\begin{equation}
\label{E:lambda:A}
  \int_{\sphere_2} g(\theta) \, \lambda_A(d\theta)
  = \frac{1}{\int_{\sphere_1} \norm{A\theta}^\alpha \, \lambda(d\theta)} \int_{\sphere_1} 
  g \biggl( \frac{A\theta}{\norm{A\theta}} \biggr) \, \norm{A\theta}^\alpha \, \lambda(d\theta).
\end{equation}
for $\lambda_A$-integrable $g : \sphere_2 \to \RR$. (For $\theta$ such that $\norm{A\theta} = 0$, the integrand on the right is to be interpreted as zero.)
\end{proposition}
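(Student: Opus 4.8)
The plan is to obtain regular variation of $AX$ by pushing forward the limit measure of $X$ through the bounded linear map $A$; the one genuinely delicate point is that $A$ need not map $\Bzero$ into $\Banach_2 \setminus \{0\}$ when $A$ has a nontrivial kernel. We may assume $\norm{A} > 0$, the case $A = 0$ being trivial. Choosing the admissible auxiliary function $V(u) := \Pr(\norm{X} > u) \in \RV_{-\alpha}$, regular variation of $X$ reads $V(u)^{-1} \, \Pr(u^{-1}X \in \,\cdot\,) \to \mu$ in $\Mzero(\Banach_1)$, where by \eqref{E:lambda2mu:f} (with $c = 1$) $\int f \, d\mu = \int_{\sphere_1} \int_0^\infty f(r\theta) \, d(-r^{-\alpha}) \, \lambda(d\theta)$ for $\mu$-integrable $f$.

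The key observation is that $f \circ A \in \Czero(\Banach_1)$ whenever $f \in \Czero(\Banach_2)$: it is bounded and continuous, and if $f$ vanishes on the ball $\ball{r}$ in $\Banach_2$, then $f \circ A$ vanishes on $\ball{r/\norm{A}}$ in $\Banach_1$ because $\norm{Ax} \le \norm{A}\,\norm{x}$. Hence, for every $f \in \Czero(\Banach_2)$,
\[
  \frac{1}{V(u)} \, \E[ f(AX/u) ] = \frac{1}{V(u)} \, \E[ (f \circ A)(X/u) ] \longrightarrow \int_{\Bzero} f(Ax) \, \mu(dx) \qquad (u \to \infty).
\]
Since $f$ vanishes near the origin, the integrand is zero on $\{ x : Ax = 0 \}$, so the limit equals $\int f \, d\mu_A$, where $\mu_A$ is the image under $A$ of the restriction of $\mu$ to $\{ x \in \Banach_1 : Ax \ne 0 \}$. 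Substituting $r \mapsto r/\norm{A\theta}$ along each ray in the polar formula for $\mu$ gives
\[
  \int f \, d\mu_A = \int_{\sphere_1} \norm{A\theta}^\alpha \int_0^\infty f\Bigl( r\, \tfrac{A\theta}{\norm{A\theta}} \Bigr) \, d(-r^{-\alpha}) \, \lambda(d\theta),
\]
with the convention that the $\theta$-integrand is $0$ when $A\theta = 0$. Since $\mu_A(\{\norm{x} \ge u\}) = \mu(\{x : \norm{Ax} \ge u\})$, the polar formula for $\mu$ yields $\mu_A(\Banach_2 \setminus \ball{u}) = u^{-\alpha} \int_{\sphere_1} \norm{A\theta}^\alpha \, \lambda(d\theta) < \infty$, so $\mu_A \in \Mzero(\Banach_2)$, and we conclude $V(u)^{-1} \, \Pr(u^{-1}AX \in \,\cdot\,) \to \mu_A$ in $\Mzero(\Banach_2)$. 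Moreover $\mu_A \ne 0$ exactly when $\int_{\sphere_1}\norm{A\theta}^\alpha \, \lambda(d\theta) > 0$, i.e.\ when $\lambda(\{\theta : A\theta \ne 0\}) > 0$; in that case $AX$ is regularly varying in $\Banach_2$ with index $\alpha$.

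To finish, homogeneity of $\mu_A$, inherited from $\mu$ via linearity of $A$, gives $\mu_A(\sphere_2) = 0$ as in the paragraph after \eqref{E:RV}. Applying Lemma~\ref{L:CMT} to the indicator of $\{\norm{x} > 1\}$, whose discontinuity set lies in the $\mu_A$-null sphere $\sphere_2$, we obtain $\Pr(\norm{AX} > u)/V(u) \to \mu_A(\{\norm{x} > 1\}) = \int_{\sphere_1}\norm{A\theta}^\alpha\,\lambda(d\theta)$, which is \eqref{E:linear:tail} (when this limit vanishes the same conclusion follows by sandwiching the indicator between functions of $\Czero$). For the spectral measure, put $c_A = \int_{\sphere_1}\norm{A\theta}^\alpha\,\lambda(d\theta) > 0$ and apply Lemma~\ref{L:CMT} with $x \mapsto g(x/\norm{x})\,\1(\norm{x} > 1)$; the displayed formula for $\int f \, d\mu_A$ then gives $c_A \int_{\sphere_2} g \, d\lambda_A = \int_{\sphere_1} g(A\theta/\norm{A\theta})\,\norm{A\theta}^\alpha\,\lambda(d\theta)$, which is \eqref{E:lambda:A}. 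The main obstacle is the bookkeeping around $\ker A$; it dissolves because composing a function that vanishes near the origin with a bounded operator again vanishes near the origin, so the mass $\mu$ places on directions annihilated by $A$ is invisible to the test functions of $\Czero$ --- and this is exactly what makes regular variation pass through $A$.
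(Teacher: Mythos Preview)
Your proof is correct and follows essentially the same route as the paper: both rely on the observation that $f\circ A\in\Czero(\Banach_1)$ whenever $f\in\Czero(\Banach_2)$, push $\mu$ forward through $A$, and read off the tail limit and the spectral measure via the polar decomposition~\eqref{E:lambda2mu:f}. The only cosmetic difference is the order of steps---the paper first derives~\eqref{E:linear:tail} directly from the fact that $\{x:\norm{Ax}=1\}$ is $\mu$-null and then establishes $\Mzero$-convergence, whereas you establish the $\Mzero$-convergence first and then deduce~\eqref{E:linear:tail} from it via Lemma~\ref{L:CMT}; your extra care about $\ker A$ (restricting $\mu$ to $\{Ax\neq0\}$ before pushing forward) makes explicit what the paper leaves implicit.
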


\begin{proof}%[Proof of Proposition~\ref{P:linear:rv}]
Let $\mu$ be the limit measure of $X$ when $V(u) = \Pr(\norm{X} > u)$; see \eqref{E:lambda2mu:f}. As $\mu$ is homogeneous, the set $\{ x \in \Banach_1 : \norm{Ax} = 1 \}$ is a $\mu$-null set. As a consequence, as $u \to \infty$,
\begin{multline*}
  \frac{\Pr(\norm{AX} > u)}{\Pr(\norm{X} > u)} 
  \to \mu( \{ x \in \Banach_1 : \norm{Ax} > 1 \} ) \\
  = \int_{\sphere_1} \int_0^\infty \1(r \norm{A\theta} > 1) \, d(-r^{-\alpha}) \, \lambda(d\theta)
  = \int_{\sphere_1} \norm{A\theta}^\alpha \, \lambda(d\theta).
\end{multline*}
If $\lambda( \{ \theta : A\theta \neq 0 \} ) > 0$, the integral on the right is positive. Let $f \in \Czero(\Banach_2)$ and let $z > 0$ be such that $f$ vanishes on the ball $B_{0,z}$ in $\Banach_2$. Since $A$ is bounded, the function $\Banach_1 \to \RR : x \mapsto f(Ax)$ belongs to $\Czero(\Banach_1)$. By regular variation of $X$ in $\Banach_1$,
\[
  \frac{1}{\Pr(\norm{X} > u)} \E [ f(AX/u) ] \to \int_{\Banach_1} f(Ax) \, \mu(dx) \qquad (u \to \infty),
\]
and $AX$ is regularly varying in $\Banach_2$ with limit measure $\mu_A = \mu \circ A^{-1}$. For measurable $g : \sphere_2 \to [0, \infty)$, by \eqref{E:lambda2mu:f},
\begin{align*}
  \lefteqn{
  \int_{\Banach_2} g(y/\norm{y}) \, \1(\norm{y} > 1) \, \mu_A(dy) 
  } \\
  &= \int_{\Banach_2} g(Ax / \norm{Ax}) \1(\norm{Ax} > 1) \, \mu(dx) \\
  &= \int_{\sphere_1} \int_0^\infty g(A\theta / \norm{A\theta}) \1(r \norm{A\theta} > 1) \, d(-r^{-\alpha}) \, \lambda(d\theta) \\ 
  &= \int_{\sphere_1} g(A\theta / \norm{A\theta}) \norm{A\theta}^\alpha \, \lambda(d\theta),
\end{align*}
yielding~\eqref{E:lambda:A}.
\end{proof}

The expression for $\lambda_A$ in \eqref{E:lambda:A} has the following probabilistic meaning: if $\Theta$ is a random element in $\sphere_1$ with distribution $\lambda$ and if $U$ is a $\text{Uniform}(0,1)$ random variable independent of $\Theta$, then
\begin{equation}
\label{E:lambda:A:prob}
  \lambda_A = \law \biggl( \frac{A \Theta}{\norm{A \Theta}} \, \bigg| \, U \le \frac{\norm{A \Theta}^\alpha}{\norm{A}^\alpha} \biggr).
\end{equation}
To show \eqref{E:lambda:A:prob}, it suffices to check that $\E [ g( A \Theta / \norm{A \Theta} ) \mid U \le \norm{A \Theta}^\alpha / \norm{A}^\alpha ]$ is equal to the right-hand side of \eqref{E:lambda:A}. Equation~\eqref{E:lambda:A:prob} justifies the following rejection algorithm to generate a random draw $\Theta_A$ from $\lambda_A$:
\begin{equation}
\label{Algorithm:linear}
\text{
\begin{minipage}[t]{0.87\textwidth}
\begin{list}{}{\setlength{\leftmargin}{3ex}}
\item[1.] Draw $\Theta \sim \lambda$ and $U \sim \text{Uniform}(0,1)$ independently.
\item[2.] If $U \le \norm{A \Theta}^\alpha / \norm{A}^\alpha$, then return $\Theta_A = A \Theta / \norm{A \Theta}$ and stop.
\item[3.] Otherwise, go back to step 1.
\end{list}
\end{minipage}
}
\end{equation}
In \eqref{E:lambda:A:prob} and \eqref{Algorithm:linear}, the operator norm $\norm{A}$ may be replaced by the possibly smaller essential supremum of $\norm{A \Theta}$, that is, by the left-hand side in
\begin{equation}
\label{E:EssSup}
  \inf \{ y > 0 : \Pr(\norm{A \Theta} > y) = 0 \} \le \norm{A}.
\end{equation}

\begin{example}[Isometries]
\label{ex:Isometry}
If the linear operator $A$ satisfies $\norm{Ax} = \norm{A} \norm{x}$ for all $x \in \Banach_1$, that is, if the linear operator $A / \norm{A}$ is an isometry, then the limit in \eqref{E:linear:tail} is just $\norm{A}^\alpha$, and \eqref{E:lambda:A} simplifies to
\[
\label{E:lambda:A:Isometry}
  \int_{\sphere_2} g(\theta) \, \lambda_A(d\theta) = \int_{\sphere_1} g(A\theta / \norm{A}) \, \lambda(d\theta),
\]
that is, $\lambda_A$ is the distribution of $A \Theta / \norm{A}$, with $\Theta$ a random element in $\sphere_1$ having law $\lambda$.
\end{example}

\begin{proposition}[Linear transformations of time series]
\label{P:linear:series}
Let $(X_t)_{t \in \ZZ}$ be a stationary time series in $\Banach_1$ and let $A : \Banach_1 \to \Banach_2$ be a bounded linear operator. If $(X_t)_{t \in \ZZ}$ is regularly varying with index $\alpha > 0$ and spectral process $(\Theta_t)_{t \in \ZZ}$ and if $A \Theta_0$ is not degenerate at zero, then $(AX_t)_{t \in \ZZ}$ is regularly varying with index $\alpha > 0$ too and the law of its spectral process $(\Theta_t^A)_{t \in \ZZ}$ is given by
\begin{multline}
\label{E:ThetaA}
  \E [ f(\Theta_{-s}^A, \ldots, \Theta_t^A) ] \\
  = \frac{1}{\E [ \norm{A\Theta_0}^\alpha ]} \,
  \E \biggl[ f \biggl( \frac{A \Theta_{-s}}{\norm{A \Theta_0}}, \ldots, \frac{A \Theta_t}{\norm{A \Theta_0}} \biggr) \, \norm{A \Theta_0}^\alpha \biggr]
\end{multline}
for all nonnegative integer $s$ and $t$ and all measurable functions $f : \Banach_2^{t+s+1} \to \RR$ for which at least one of the two expectations is defined. (If $\norm{A \Theta_0} = 0$, the integrand on the right is to be interpreted as zero.)
\end{proposition}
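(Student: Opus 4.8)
The plan is to derive joint regular variation of $(AX_t)_{t\in\ZZ}$ by a pushforward argument, then let Theorem~\ref{T:spectral} hand back the spectral process and read off its law from the polar decomposition of the relevant limit measure. Set $c_A := \E[\norm{A\Theta_0}^\alpha]$. Since $\law(\Theta_0)$ is the spectral measure $\lambda$ of the common law of the $X_t$, the non-degeneracy hypothesis on $A\Theta_0$ reads $\lambda(\{\theta\in\sphere_1 : A\theta\neq 0\})>0$, so $c_A\in(0,\infty)$ and $A\neq 0$. Proposition~\ref{P:linear:rv} applied to $X_0$ then gives $\Pr(\norm{AX_0}>u)/\Pr(\norm{X_0}>u)\to c_A$, in particular $u\mapsto\Pr(\norm{AX_0}>u)\in\RV_{-\alpha}$.

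Next I would fix nonnegative integers $s,t$, put $k=s+t+1$, and let $\mu_{s,t}\in\Mzero(\Banach_1^k)$ be the limit measure of $(X_{-s},\ldots,X_t)$ obtained with auxiliary function $\Pr(\norm{X_0}>u)$, so that $\mu_{s,t}(\{\norm{x_0}>1\})=1$ and $(\Theta_{-s},\ldots,\Theta_t)$ has the law $\lambda_{s,t}$ of \eqref{E:must2lambdast}. Let $A^{(k)}\colon\Banach_1^k\to\Banach_2^k$ be the bounded operator $(x_{-s},\ldots,x_t)\mapsto(Ax_{-s},\ldots,Ax_t)$, of norm $\norm{A}$. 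For $f\in\Czero(\Banach_2^k)$ one has $f\circ A^{(k)}\in\Czero(\Banach_1^k)$, so testing the defining convergence for $\mu_{s,t}$ against $f\circ A^{(k)}$ shows that $\frac{1}{\Pr(\norm{X_0}>u)}\Pr[(AX_{-s}/u,\ldots,AX_t/u)\in\,\cdot\,]\to\mu^A_{s,t}:=\mu_{s,t}\circ(A^{(k)})^{-1}$ in $\Mzero(\Banach_2^k)$. Since $\mu^A_{s,t}(\{\norm{y_0}>1\})=\mu_{s,t}(\{\norm{Ax_0}>1\})=c_A>0$, the measure $\mu^A_{s,t}$ is nonzero, so $(AX_{-s},\ldots,AX_t)$ is regularly varying in $\Banach_2^k$; letting $s,t$ vary and using stationarity, $(AX_t)_{t\in\ZZ}$ is jointly regularly varying with index $\alpha$. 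By Theorem~\ref{T:spectral} it thus admits a spectral process $(\Theta^A_t)_{t\in\ZZ}$, and since using $\Pr(\norm{AX_0}>u)$ instead of $\Pr(\norm{X_0}>u)$ as auxiliary function multiplies the limit by $c_A^{-1}$, formula \eqref{E:must2lambdast} applied to $(AX_t)_{t\in\ZZ}$ gives, for bounded continuous $g:\Banach_2^k\to\RR$,
\[
  \E[g(\Theta^A_{-s},\ldots,\Theta^A_t)]=\frac{1}{c_A}\int_{\{\norm{y_0}>1\}}g(y_{-s}/\norm{y_0},\ldots,y_t/\norm{y_0})\,\mu^A_{s,t}(dy).
\]

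Then I would evaluate the right-hand side. Unfolding the pushforward, it equals $c_A^{-1}\int_{\{\norm{Ax_0}>1\}}g(Ax_{-s}/\norm{Ax_0},\ldots,Ax_t/\norm{Ax_0})\,\mu_{s,t}(dx)$, whose integrand vanishes on $\{x_0=0\}$, so only $\mu_{s,t}$ restricted to $\{x_0\neq0\}$ contributes. On that set $\mu_{s,t}$ has a clean polar form: by \eqref{E:must2lambdast} together with the homogeneity of $\mu_{s,t}$, the image of $\mu_{s,t}|_{\{x_0\neq0\}}$ under $x\mapsto(\norm{x_0},x/\norm{x_0})$ is $\nu_\alpha\otimes\lambda_{s,t}$, whence $\int h\,d\mu_{s,t}=\E\int_0^\infty h(r\Theta_{-s},\ldots,r\Theta_t)\,d(-r^{-\alpha})$ for every $\mu_{s,t}$-integrable $h$ vanishing on $\{x_0=0\}$. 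Applying this with $h(x)=g(Ax_{-s}/\norm{Ax_0},\ldots,Ax_t/\norm{Ax_0})\,\1(\norm{Ax_0}>1)$, observing that on $\{\norm{A\Theta_0}>0\}$ the argument of $g$ is free of $r$ and that $\int_{1/\norm{A\Theta_0}}^\infty d(-r^{-\alpha})=\norm{A\Theta_0}^\alpha$, yields
\[
  \E[g(\Theta^A_{-s},\ldots,\Theta^A_t)]=\frac{1}{c_A}\,\E\bigl[g(A\Theta_{-s}/\norm{A\Theta_0},\ldots,A\Theta_t/\norm{A\Theta_0})\,\norm{A\Theta_0}^\alpha\bigr],
\]
with the integrand read as zero when $\norm{A\Theta_0}=0$. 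This is \eqref{E:ThetaA} for bounded continuous $f$; since its right-hand side is $\int f\,d\rho$ for a Borel measure $\rho$ on $\Banach_2^k$ of total mass $c_A^{-1}\E[\norm{A\Theta_0}^\alpha]=1$, the laws of $(\Theta^A_{-s},\ldots,\Theta^A_t)$ and $\rho$ coincide, and \eqref{E:ThetaA} extends to every measurable $f$ for which one side is defined.

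The step I expect to be the main obstacle is the bookkeeping for $\mu_{s,t}$: its mass on the hyperplane $\{x_0=0\}$ can be infinite — this is exactly what the sum over $j$ in \eqref{E:Theta2muk} records — so trying to compute $\int h\,d\mu_{s,t}$ through that formula would be awkward. The observation that keeps the proof short is that every integrand one meets here vanishes on $\{Ax_0=0\}\supseteq\{x_0=0\}$, so only the well-behaved part of $\mu_{s,t}$ on $\{x_0\neq0\}$ is seen, and there $\mu_{s,t}$ factorises in polar coordinates relative to the $0$-th component with angular part $\law(\Theta_{-s},\ldots,\Theta_t)$. Everything else — the pushforward, the change of variables, and the collapse of the $r$-integral — is routine.
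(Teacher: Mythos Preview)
Your proof is correct but takes a different route from the paper. The paper works probabilistically: it writes
\[
  \E\bigl[f(AX_{-s}/\norm{AX_0},\ldots,AX_t/\norm{AX_0})\mid\norm{AX_0}>u\bigr]
\]
and, using that $\norm{AX_0}>u$ implies $\norm{X_0}>u/\norm{A}$, conditions instead on $\{\norm{X_0}>u/\norm{A}\}$, carries an indicator $\1(\norm{AX_0}>u)$, and passes to the limit via Theorem~\ref{T:spectral}(iii). In the limit the indicator becomes $\1(Y\norm{A\Theta_0}>\norm{A})$, and since $Y$ is $\Pareto(\alpha)$ independent of $(\Theta_t)$, it is replaced by the conditional probability $\norm{A\Theta_0}^\alpha/\norm{A}^\alpha$, which cancels the prefactor and yields~\eqref{E:ThetaA}. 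Your approach is measure-theoretic: you push $\mu_{s,t}$ forward through $A^{(k)}$ and then exploit the polar factorisation $\mu_{s,t}|_{\{x_0\neq 0\}}\cong\nu_\alpha\otimes\lambda_{s,t}$ relative to the $0$-th coordinate, so the radial integral collapses to $\norm{A\Theta_0}^\alpha$ in one line. What your approach buys is that it never touches the conditioning trick or the auxiliary scale $u/\norm{A}$; what the paper's approach buys is that it stays entirely at the level of the probabilistic characterisation (iii) and does not need to isolate and justify the polar decomposition of $\mu_{s,t}$ on $\{x_0\neq 0\}$ as a separate ingredient. Both arrive at~\eqref{E:ThetaA} for bounded continuous $f$ and extend identically.
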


\begin{proof}%[Proof of Proposition~\ref{P:linear:series}]
By Proposition~\ref{P:linear:rv},
\begin{equation}
\label{E:tailA}
  \frac{\Pr( \norm{A X_0} > u )}{\Pr( \norm{X_0} > u )}
  \to \E [ \norm{A \Theta_0}^\alpha ] \qquad (u \to \infty),
\end{equation}
a limit which is strictly positive by assumption. For positive integer $k$, let $\mu_k$ be the limit measure in~\eqref{E:muk}. Then by the previous display and by regular variation of $(X_t)_{t \in \ZZ}$,
\begin{multline*}
  \frac{1}{\Pr( \norm{A X_0} > u )} \, \Pr[(A X_1 / u, \ldots, A X_k / u) \in \, \cdot \,] \\
  \to \frac{1}{\E [ \norm{A \Theta_0}^\alpha ]} \, \mu_k \circ A^{-1} =: \mu_k^A \qquad (u \to \infty) \quad \text{in $\Mzero(\Banach_2^k)$}.
\end{multline*}
It follows that the series $(A X_t)_{t \in \ZZ}$ is regularly varying with index $\alpha > 0$. Let $(\Theta_t^A)_{t \in \ZZ}$ be its spectral process.

Let $f : \Banach_2^k \to \RR$ be bounded and continuous. Since $\norm{AX_0} > u$ implies $\norm{X_0} > u/\norm{A}$, we have
\begin{multline*}
  \E \biggl[ f \biggl( \frac{AX_{-s}}{\norm{AX_0}}, \ldots, \frac{AX_t}{\norm{AX_0}} \biggr) \, \bigg| \, \norm{AX_0} > u \biggr] \\
  \shoveleft = \frac{\Pr(\norm{X_0} > u/\norm{A})}{\Pr( \norm{AX_0} > u)} \\
  \E \biggl[ f \biggl( \frac{AX_{-s}}{\norm{AX_0}}, \ldots, \frac{AX_t}{\norm{AX_0}} \biggr) \, \1(\norm{AX_0} > u) \, \bigg| \, \norm{X_0} > u/\norm{A} \biggr].
\end{multline*}
By \eqref{E:tailA}, regular variation of the function $u \mapsto \Pr(\norm{X_0} > u)$, and Theorem~\ref{T:spectral}(iii), the expression in the preceding display converges as $u \to \infty$ to
\[
  \frac{\norm{A}^\alpha}{\E[ \norm{A\Theta_0}^\alpha ]}
  \E \biggl[ f \biggl( \frac{A\Theta_{-s}}{\norm{A\Theta_0}}, \ldots, \frac{A\Theta_t}{\norm{A\Theta_0}} \biggr) \, \1(Y \norm{A\Theta_0} > \norm{A}) \biggr].
\]
Since $Y$ is a $\Pareto(\alpha)$ random variable independent of the spectral process, we may replace the indicator variable on the right-hand side by the conditional probability $\Pr(Y \norm{A \Theta_0} > \norm{A} \mid \Theta_0) = \norm{A \Theta_0}^\alpha / \norm{A}^\alpha$ (observe that $\norm{A \Theta_0} \le \norm{A}$). We arrive at the right-hand side of~\eqref{E:ThetaA}. 

We have now shown the identity \eqref{E:ThetaA} for functions $f : \Banach_2^k \to \RR$ that are bounded and continuous. As a consequence, the two probability distributions defined by both sides of~\eqref{E:ThetaA} are equal. It follows that~\eqref{E:ThetaA} is true for all functions $f$ for which at least one of the two expectations is defined.
\end{proof}

As in \eqref{E:lambda:A:prob}, we find that the spectral process $(\Theta_t^A)_{t \in \ZZ}$ of $(AX_t)_{t \in \ZZ}$ is given by
\begin{equation}
\label{E:ThetaA:prob}
  \law \bigl( (\Theta_t^A)_{t \in \ZZ} \bigr)
  = \law \Biggl( \biggl(\frac{A \Theta_t}{\norm{A \Theta_0}} \biggr)_{t \in \ZZ} \, \Bigg| \,  U \le \frac{\norm{A \Theta_0}^\alpha}{\norm{A}^\alpha} \Biggr),
\end{equation}
with $U$ a $\text{Uniform}(0, 1)$ random variable independent of $(\Theta_t)_{t \in \ZZ}$. The rejection algorithm in \eqref{Algorithm:linear} can be adapted in an obvious way to produce random draws from the distribution of $(\Theta_t^A)_{t \in \ZZ}$.

\section{Infinite random sums}
\label{S:series}

Let $\Banach_1$ and $\Banach_2$ be real, separable Banach spaces. We are interested in the tail behavior of the $\Banach_2$-valued infinite random sum
\begin{equation}
\label{E:series}
  X = \som_n \, T_n Z_n
\end{equation}
where $(Z_n)_{n \in \ZZ}$ is a sequence of independent and identically distributed random elements in $\Banach_1$ and the $T_n : \Banach_1 \to \Banach_2$ are bounded linear operators. In Section~\ref{S:LinProc}, we will extend this study to linear processes. In Euclidean space, the case of random linear operators (i.e.\ random matrices) has been studied in \citet{HS08}.

We first discuss convergence of the random sum in \eqref{E:series}. Assume that the function $V$ defined by $V(x) = \Pr(\norm{Z_n} > x)$ for $x > 0$ is regularly varying with index $- \alpha$ for some $\alpha > 0$ and that there exists $\delta$ with $0 < \delta < \min(\alpha, 1)$ such that
\begin{equation}
\label{E:Resnick}
  \som_n \norm{T_n}^\delta < \infty.
\end{equation}
As $\E [ \norm{Z_n}^\delta ] = \int_0^\infty V(x^{1/\delta}) \, dx < \infty$, we have
\begin{equation}
\label{E:SeriesConvergence}
  \E [ (\som_n \, \norm{T_n Z_n})^\delta ] \le \som_n \, \norm{T_n}^\delta \, \E [ \norm{Z_n}^\delta ] < \infty,
\end{equation}
so that the series $X$ converges almost surely. Moreover, by \citet[Lemma~4.24]{RES87}, the tail of $\norm{X}$ is of the same order as the one of $\norm{Z_n}$:
\begin{equation}
\label{E:TailBound}
  \frac{\Pr(\norm{X} > x)}{V(x)} \le \frac{\Pr( \som_n \, \norm{T_n} \norm{Z_n} > x )}{V(x)} \to \som_n \, \norm{T_n}^\alpha < \infty
\end{equation}
as $x \to \infty$. If the common law of the random elements $Z_n$ is concentrated on a closed linear subspace $\Banach_{1,0}$ of $\Banach_1$, then \eqref{E:Resnick} may be replaced by the weaker condition
\begin{equation}
\label{E:Resnick:0}
  \som_n \, \norm{T_n}_0^\delta < \infty,
\end{equation}
with $\delta$ as before and with $\norm{T}_0 = \sup \{ \norm{T x} : x \in \Banach_{1,0}, \norm{x} = 1 \}$ the operator norm of the restriction of the linear operator $T : \Banach_1 \to \Banach_2$ to $\Banach_{1,0}$; note that $\norm{T}_0 \le \norm{T}$. Likewise, in \eqref{E:SeriesConvergence} and \eqref{E:TailBound}, $\norm{T_n}$ may be replaced by $\norm{T_n}_0$. Further note that if $\alpha > 1$ and if the innovations have expectation zero ($\Banach_1 = \Banach_2 = \RR$), condition~\eqref{E:Resnick} is not necessary for convergence of the series $X$, although in that case the convergence does not need to hold absolutely; see for instance \citet[Lemma~A.3]{MS00}. 

%Condition~\ref{E:Resnick} is used in \citet[Lemma~5.2]{DM08} in the context of a random series with heavy-tailed innovations in Skorohod space.

Now assume that the common distribution of the random elements $Z_n$ is regularly varying with index $\alpha$ and spectral measure $\lambda$. By Proposition~\ref{P:linear:rv}, we have
\begin{equation}
\label{E:series:c_n}
  \lim_{x \to \infty} \frac{\Pr( \norm{T_n Z_n} > x )}{V(x)} = \int_{\sphere_1} \norm{T_n \theta}^\alpha \, \lambda(d\theta) =: c_n.
\end{equation}
Moreover, if $c_n > 0$, then $T_n Z_n$ is regularly varying in $\Banach_2$ with index $\alpha$ and with spectral measure $\lambda_n$ given by
\begin{equation}
\label{E:series:lambda_n}
  \int_{\sphere_2} f(\theta) \, \lambda_n(d\theta) = \frac{1}{c_n} \int_{\sphere_1} f(T_n \theta / \norm{T_n \theta}) \, \norm{T_n \theta}^\alpha \, \lambda(d\theta)
\end{equation}
for $\lambda_n$-integrable functions $f : \sphere_2 \to \RR$. If at least one of the $c_n$ is positive, then according to the next proposition, the random series $X$ is regularly varying with index $\alpha$ as well, its spectral measure $\lambda_X$ being a mixture of the spectral measures $\lambda_n$.

\begin{proposition}
\label{P:series}
If the common distribution of the independent random elements $Z_n$ ($n \in \ZZ$) is regularly varying with index $\alpha$ and spectral measure $\lambda$ and if the linear operators $T_n$ satisfy \eqref{E:Resnick}, then, with $c_n$ as in \eqref{E:series:c_n}, we have
\begin{equation}
\label{E:series:tail}
  \lim_{x \to \infty} \frac{\Pr( \norm{\som_n T_n Z_n} > x )}{V(x)} 
  = \lim_{x \to \infty} \frac{\Pr( \som_n \norm{T_n Z_n} > x)}{V(x)} 
  = \som_n c_n < \infty.
\end{equation}
If $\som_n c_n > 0$, then the random series $X = \som_n T_n Z_n$ is regularly varying with index $\alpha$ too, its spectral measure $\lambda_X$ being given by
\begin{equation}
\label{E:series:lambda}
  \lambda_X = \som_n p_n \lambda_n,
\end{equation}
with $\lambda_n$ as in \eqref{E:series:lambda_n} and with
\begin{equation}
\label{E:series:p_n}
  p_n 
  = \frac{c_n}{\sum_k c_k} 
  = \lim_{x \to \infty} \Pr ( \norm{T_n Z_n} > x \mid \norm{\som_k T_k Z_k} > x ).
\end{equation}
\end{proposition}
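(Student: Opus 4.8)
The plan is to prove the statement first for the finite sums $S_N=\som_{|n|\le N}T_nZ_n$ and then to pass to the full series $X$ by a truncation (``converging together'') argument; the formula \eqref{E:series:lambda} for $\lambda_X$ and the probabilistic meaning \eqref{E:series:p_n} of $p_n$ will then follow by inspection. For the finite sum: by Proposition~\ref{P:linear:rv} each summand $T_nZ_n$ is regularly varying in $\Banach_2$ with index $\alpha$, normalizing function $V$, limit measure (call it) $\mu_n$ satisfying $\mu_n(\{\norm{x}>1\})=c_n$, and induced spectral measure $\lambda_n$. Since the $Z_n$ are independent, $\Pr(\norm{T_nZ_n}>x,\norm{T_mZ_m}>x)=O(V(x)^2)=o(V(x))$ for $n\ne m$, so the random vector $(T_nZ_n)_{|n|\le N}$ is regularly varying in $\Banach_2^{2N+1}$ with limit measure concentrated on the coordinate axes; applying Proposition~\ref{P:linear:rv} to the bounded linear summation operator $\Banach_2^{2N+1}\to\Banach_2$ yields $V(x)^{-1}\Pr(S_N/x\in\,\cdot\,)\to\som_{|n|\le N}\mu_n$ in $\Mzero(\Banach_2)$ (a zero measure, and $\Pr(\norm{S_N}>x)=o(V(x))$ by a union bound, in the degenerate case where all these $c_n$ vanish). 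This ``sum of independent regularly varying elements'' fact is standard and could be placed in the Appendix.

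Next I would write $X=S_N+R_N$ with $R_N=\som_{|n|>N}T_nZ_n$. From \eqref{E:Resnick} and $\norm{T_n}\to0$ one gets $\som_n\norm{T_n}^\alpha<\infty$, so $\som_{|n|>N}\norm{T_n}^\alpha\to0$ as $N\to\infty$; moreover \eqref{E:TailBound}, applied to the tail series, gives $\limsup_{x\to\infty}V(x)^{-1}\Pr(\norm{R_N}>\eta x)\le\eta^{-\alpha}\som_{|n|>N}\norm{T_n}^\alpha$ for every $\eta>0$. Now fix $f\in\Czero(\Banach_2)$, vanishing on $\ball{z}$, and, since bounded Lipschitz functions in $\Czero(\Banach_2)$ are enough to determine $\Mzero$-convergence, Lipschitz with some constant $L$. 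Splitting according to $\{\norm{R_N}\le\eta x\}$ and noting that on that event $f(X/x)$ and $f(S_N/x)$ can differ only if $\norm{S_N}>(z-\eta)x$, one gets, for $0<\eta<z$,
\[
  \bigl|\E f(X/x)-\E f(S_N/x)\bigr|
  \le 2\norm{f}_\infty\,\Pr(\norm{R_N}>\eta x)+L\eta\,\Pr\bigl(\norm{S_N}>(z-\eta)x\bigr).
\]
Dividing by $V(x)$, taking $\limsup_{x\to\infty}$ (the right side is then at most $2\norm{f}_\infty\eta^{-\alpha}\som_{|n|>N}\norm{T_n}^\alpha+L\eta(z-\eta)^{-\alpha}\som_{|n|\le N}c_n$ by the first step), then $N\to\infty$ (using that $\som_n\int f\,d\mu_n$ converges absolutely, since $|\int f\,d\mu_n|\le\norm{f}_\infty z^{-\alpha}c_n$), and finally $\eta\downto0$, we obtain $V(x)^{-1}\E f(X/x)\to\int f\,d\mu$ with $\mu:=\som_n\mu_n$, which lies in $\Mzero(\Banach_2)$ because $\mu(\{\norm{x}>u\})=u^{-\alpha}\som_n c_n<\infty$. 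Thus $X$ is regularly varying with index $\alpha$ and limit measure $\mu$, nonzero when $\som_n c_n>0$. Reading off the mass of the unit-ball complement gives $\Pr(\norm{X}>x)/V(x)\to\som_n c_n$, which combined with \eqref{E:TailBound} (and the same truncation argument applied to the non-negative series $\som_n\norm{T_nZ_n}$) proves \eqref{E:series:tail}; and $\lambda_X(A)=\mu(\{\norm{x}>1,\,x/\norm{x}\in A\})/\som_n c_n=\som_n(c_n/\som_kc_k)\lambda_n(A)$, which is \eqref{E:series:lambda}.

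For the probabilistic form of $p_n$ in \eqref{E:series:p_n}, I would use a squeeze. With $R^{(n)}:=X-T_nZ_n=\som_{k\ne n}T_kZ_k$, which is independent of $T_nZ_n$ and satisfies $\Pr(\norm{R^{(n)}}\le\eta x)\to1$, one has on the one hand $\Pr(\norm{T_nZ_n}>x,\norm{X}>x)\le\Pr(\norm{T_nZ_n}>x)\sim c_nV(x)$ and, since $\{\norm{T_nZ_n}>(1+\eta)x,\norm{R^{(n)}}\le\eta x\}\subset\{\norm{X}>x\}$, on the other hand $\Pr(\norm{T_nZ_n}>x,\norm{X}>x)\ge\Pr(\norm{T_nZ_n}>(1+\eta)x)\,\Pr(\norm{R^{(n)}}\le\eta x)\sim(1+\eta)^{-\alpha}c_nV(x)$. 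Letting $\eta\downto0$ gives $\Pr(\norm{T_nZ_n}>x,\norm{X}>x)\sim c_nV(x)$, and dividing by $\Pr(\norm{X}>x)\sim(\som_kc_k)V(x)$ yields $p_n=c_n/\som_kc_k$.

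The main obstacle is the interchange of the limits $x\to\infty$ and $N\to\infty$ in the second step. It goes through only because test functions in $\Czero$ vanish near the origin: this is what makes the perturbation error incurred by replacing $X$ with $S_N$ on the event $\{\norm{R_N}\le\eta x\}$ of order $V(x)$ rather than of order one, the required uniform control of the tail of $R_N$ coming from \eqref{E:TailBound} together with $\som_{|n|>N}\norm{T_n}^\alpha\to0$. Setting up the finite-sum fact of the first step (asymptotic independence of independent regularly varying elements) cleanly in the $\Mzero$ framework is the other point deserving care, though it is by now routine.
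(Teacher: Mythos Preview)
Your argument is correct but organizes the work differently from the paper. The paper routes everything through an appendix result (Proposition~\ref{P:sum:inf}) establishing the indicator-level approximation
\[
  \E\bigl|\1(\norm{X}>x)-\som_n\1(\norm{T_nZ_n}>x)\bigr|=o(V(x)),
\]
itself proved by a two-summand lemma, induction to finite sums, and a truncation very close in spirit to yours. With that in hand, the paper's proof of Proposition~\ref{P:series} is short: \eqref{E:series:tail} is immediate, and for \eqref{E:series:lambda} one writes $\E[f(X/x)\1(\norm{X}>x)]=\som_n\E[f(X/x)\mid\norm{T_nZ_n}>x]\,\Pr(\norm{T_nZ_n}>x)+o(V(x))$ and evaluates each conditional term via independence and regular variation of $T_nZ_n$. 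You instead upgrade the finite-sum step to full $\Mzero$-convergence of $S_N$ (slightly stronger than the paper's Lemma~\ref{L:sum:n}, though equally routine) and run the truncation directly on $\Czero$-test functions, obtaining the limit measure $\mu=\som_n\mu_n$ in one stroke and then reading off $\lambda_X$. Your approach is more self-contained for this proposition and sidesteps the indicator decomposition; the paper's approach isolates a reusable ``single big jump'' principle that also delivers the probabilistic interpretation of $p_n$ in \eqref{E:series:p_n} without a separate squeeze. Neither is materially shorter once the supporting lemmas are tallied. One small point worth making explicit in your write-up is why bounded Lipschitz $f\in\Czero$ suffice for $\Mzero$-convergence (via the Portmanteau characterization on complements of balls).
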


\begin{proof}%[Proof of Proposition~\ref{P:series}]
Equation~\eqref{E:series:tail} follows from equation~\eqref{E:series:c_n} and Proposition~\ref{P:sum:inf}, equation~\eqref{E:sum:inf:tail}.

Let $f : \Banach_2 \to [0, \infty)$ be bounded and continuous. By Proposition~\ref{P:sum:inf}, equation~\eqref{E:sum:inf},
\begin{multline}
\label{E:series:f}
  \E [ f(\som_k T_k Z_k / x) \, \1( \norm{\som_n T_n Z_n} > x ) ] \\
  = \som_n \E [ f(\som_k T_k Z_k / x) \mid \norm{T_n Z_n} > x ] \, \Pr( \norm{T_n Z_n} > x ) + o \bigl( V(x) \bigr)
\end{multline}
as $x \to \infty$. Assume $\sum_n c_n > 0$. Let $n \in \ZZ$ be such that $c_n > 0$. Regular variation of $T_n Z_n$ with index $\alpha$ and spectral measure $\lambda_n$ entails
\[
  \lim_{x \to \infty} \E [ f(T_n Z_n / x) \mid \norm{T_n Z_n} > x ] 
  = \int_1^\infty \int_{\sphere_2} f(r\theta) \, \lambda_n(d\theta) \, d(-r^{-\alpha}).
\]
By independence, we have
\[
  \law \bigl( \som_{k \neq n} T_k Z_k / x \, \big| \, \norm{T_n Z_n} > x \bigr)
  = \law \bigl( \som_{k \neq n} T_k Z_k / x \bigr)
  \dto \delta_0 \qquad (x \to \infty).
\]
Combine the two previous displays to see that
\[
  \lim_{x \to \infty} \E [ f(\som_k T_k Z_k / x) \mid \norm{T_n Z_n} > x ] 
  = \int_1^\infty \int_{\sphere_2} f(r\theta) \, \lambda_n(d\theta) \, d(-r^{-\alpha}).
\]
The latter convergence in combination with \eqref{E:series:tail} and \eqref{E:series:f} implies
\begin{multline*}
  \lim_{x \to \infty} \frac{1}{V(x)} \E [ f(\som_k T_k Z_k / x) \, \1 (\norm{\som_n T_n Z_n} > x ) ] \\
  = \sum_n c_n \int_1^\infty \int_{\sphere_2} f(r\theta) \, \lambda_n(d\theta) \, d(-r^{-\alpha}).
\end{multline*}
We obtain
\begin{multline*}
  \lim_{x \to \infty} \frac{1}{V(x)} \E [ f(\som_k T_k Z_k / x) \mid \norm{\som_n T_n Z_n} > x  ] \\
  = \sum_n p_n \int_1^\infty \int_{\sphere_2} f(r\theta) \, \lambda_n(d\theta) \, d(-r^{-\alpha}).
\end{multline*}
Equation~\eqref{E:series:lambda} follows with $p_n = c_n / \sum_k c_k$. Equation~\eqref{E:series:p_n} now follows from Proposition~\ref{P:sum:inf}.
\end{proof}

% Equation~\eqref{E:series:lambda} is a mixture representation of spectral measure $\lambda_X$ in terms of the spectral measures $\lambda_n$. The following variant of the rejection algorithm in \eqref{Algorithm:linear} produces a random draw $\Theta^X$ from $\lambda_X$:
% \begin{equation}
% \label{Algorithm:series}
% \text{
% \begin{minipage}[t]{0.87\textwidth}
% \begin{list}{}{\setlength{\leftmargin}{3ex}}
% \item[1.] Draw a random integer $N$ from $(p_n)_{n \in \ZZ}$.
% \item[2.] Independently from $N$ and from each other, draw $\Theta^Z \sim \lambda$ and $U \sim \text{Uniform}(0, 1)$.
% \item[3.] If $U \le \norm{T_N \Theta^Z}^\alpha / \norm{T_N}^\alpha$, then return $\Theta^X = T_N \Theta^Z / \norm{T_N \Theta^Z}$ and stop.
% \item[4.] Otherwise, go back to step 2.
% \end{list}
% \end{minipage}
% }
% \end{equation}

\begin{example}[Isometries]
\label{ex:series:Isometry}
Assume that the linear operators $T_i$ satisfy $\norm{T_i z} = \norm{T_i} \norm{z}$ for all $i \in \ZZ$ and all $z \in \Banach_1$, that is, the linear operators $T_i / \norm{T_i}$ are isometries, see Example~\ref{ex:Isometry}. The constants $c_n$ and $p_n$ in \eqref{E:series:c_n} and \eqref{E:series:p_n} respectively then simplify to
\begin{align}
\label{E:series:Isometry:constants}
  c_n &= \norm{T_n}^\alpha, & p_n &= \frac{\norm{T_n}^\alpha}{\sum_k \norm{T_k}^\alpha}
\end{align}
for $n \in \ZZ$. The tail of $\norm{X}$ satisfies
\[
  \lim_{u \to \infty} \frac{\Pr( \norm{X} > u )}{\Pr( \norm{Z_0} > u )} = \som_n \, \norm{T_n}^\alpha.
\]
If $T_n$ is nonzero, the spectral measure $\lambda_n$ in \eqref{E:series:lambda_n} is equal to the law of $T_n \Theta^Z / \norm{T_n}$, with $\Theta^Z$ a random element in $\sphere_1$ with distribution $\lambda$. The spectral measure $\lambda_X$ of $X$ is then equal to the law of $T_N \Theta^Z / \norm{T_N}$, with $N$ an integer-valued random variable, independent of $\Theta^Z$ and with distribution given by $\Pr(N = n) = p_n$ for $n \in \ZZ$.
\end{example}

\begin{example}[Linear combinations with random coefficients]
\label{ex:series:LinComb1}
In case $\Banach_1 = \RR$ we can write $\Banach_2 = \Banach$ and the series $X$ is an infinite linear combination of the elements $\psi_i = T_i(1) \in \Banach$ with random coefficients $Z_i$:
\[
  X = \som_i \, Z_i \psi_i.
\]
Since $\norm{T_i z} = |z| \, \norm{\psi_i}$ for all $z \in \RR$ and all $i \in \ZZ$, Example~\ref{ex:series:Isometry} applies with $\norm{T_i} = \norm{\psi_i}$. In particular, the spectral measure of $X$ is equal to the law of $\Theta^Z \psi_N / \norm{\psi_N}$, with $\Theta^Z$ a random variable in $\{-1, +1\}$ and $N$ an integer-valued random variable independent of $\Theta^Z$ and distribution determined by $\Pr(N = n) = p_n = \norm{\psi_n}^\alpha / \sum_k \norm{\psi_k}^\alpha$ for all $n \in \ZZ$.
\end{example}

\begin{example}[Linear combinations of random elements]
\label{ex:series:LinComb2}
In case $\Banach_1 = \Banach_2 = \Banach$ and $T_i = a_i \Id$ with $a_i \in \RR$ and $\Id$ the identity operator on $\Banach$, the series $X$ is an infinite linear combination of the random elements $Z_i$ with coefficients $a_i$:
\[
  X = \som_i \, a_i Z_i.
\]
Since $\norm{T_i z} = |a_i| \, \norm{z}$ for all $z \in \Banach$ and $i \in \ZZ$, Example~\ref{ex:series:Isometry} applies again with $\norm{T_i} = a_i$. The spectral measure of $X$ is equal to the law of $\sign(a_N) \, \Theta^Z$, with $N$ an integer-valued random variable independent of $\Theta^Z$ and distribution equal to $\Pr(N = n) = p_n = |a_n|^\alpha / \sum_k |a_k|^\alpha$ for $n \in \NN$. Note that
\[
  \Pr[\sign(a_N) = \pm 1] = \frac{\sum_n (a_n)_\pm^\alpha}{\sum_n |a_n|^\alpha},
\]
where $(a)_\pm = \max(\pm a, 0)$ for $a \in \RR$.
\end{example}

\section{Linear processes}
\label{S:LinProc}

Consider the same setting as in Section~\ref{S:series}. Rather than a single random series, we now study the linear process
\begin{equation}
\label{E:LinProc}
  X_t = \som_i \, T_i Z_{t-i}, \qquad t \in \ZZ.
\end{equation}
As before, $(Z_n)_{n \in \ZZ}$ is a sequence of independent and identically distributed random elements in $\Banach_1$ and the $T_n : \Banach_1 \to \Banach_2$ are bounded linear operators. If the common distribution of the $Z_n$ is regularly varying with index $\alpha$ and if Resnick's condition in equation~\eqref{E:Resnick} holds, then the random series defining $X_t$ converges absolutely and $(X_t)_{t \in \ZZ}$ is a stationary time series in $\Banach_2$. Banach-space valued linear processes with light-tailed innovations (finite second moments) have been studied extensively in the field of functional data analysis, see for instance the monograph by \cite{BOS00}. Linear processes with regularly varying innovation distribution appear for instance in \cite{DR85}, \cite{DR86}, \cite{EKM97}, \cite{MS00}, and \cite{DM08}.

Recall $c_n = \int_{\sphere_1} \norm{T_n \theta}^\alpha \, \lambda(d\theta)$ in \eqref{E:series:c_n}, with $\lambda$ the spectral measure of the common law of the random elements $Z_t$. If $c_n > 0$, we can define a probability measure $\kappa_n$ on the space $\Banach_2^\ZZ$ of $\Banach_2$-valued sequences endowed with the product topology by
\begin{multline}
\label{E:LinProc:kappa_n}
  \int_{\Banach_2^\ZZ} f (\theta_{-s}, \ldots, \theta_t) \, \kappa_n \bigl(d (\theta_n)_{n \in \ZZ}\bigr) \\
  = \frac{1}{c_n} \int_{\sphere_1} f \biggl( \frac{T_{-s+n} \theta}{\norm{T_n \theta}}, \ldots, \frac{T_{t+n} \theta}{\norm{T_n \theta}} \biggr) \, 
    \norm{T_n \theta}^\alpha \, \lambda(d\theta),
\end{multline}
for nonnegative integer $s, t$ and for bounded and continuous $f : \Banach_1^{t+s+1} \to \RR$. Extending Proposition~\ref{P:series}, we find that if $\sum_n c_n > 0$, the time series $(X_t)_{t \in \ZZ}$ is regularly varying with index $\alpha > 0$, the law of its spectral measure being a mixture of the $\kappa_n$ above.

\begin{proposition}
\label{P:LinProc}
Let the common distribution of the $\Banach_1$-valued, independent random elements $Z_n$ ($n \in \ZZ$) be regularly varying with index $\alpha$ and spectral measure $\lambda$, and let $T_n : \Banach_1 \to \Banach_2$ be bounded linear operators. If \eqref{E:Resnick} holds and if $\sum_n c_n > 0$ with $c_n$ as in \eqref{E:series:c_n}, then $(X_t)_{t \in \ZZ}$ in \eqref{E:LinProc} is a regularly varying stationary time series in $\Banach_2$ with index $\alpha$, its spectral process $(\Theta_t)_{t \in \ZZ}$ having law $\kappa$ equal to
\begin{equation}
\label{E:LinProc:kappa}
  \kappa = \som_n p_n \kappa_n
\end{equation}
with $p_n$ as in equation~\eqref{E:series:p_n} and $\kappa_n$ as in equation~\eqref{E:LinProc:kappa_n}.
\end{proposition}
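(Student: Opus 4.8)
The plan is to reduce Proposition~\ref{P:LinProc} to Proposition~\ref{P:series} by vectorising finite stretches of the process, to obtain the spectral process from Theorem~\ref{T:spectral}, and then to identify its law with $\kappa$ through a computation in polar coordinates. Fix nonnegative integers $s$ and $t$, set $k = t+s+1$, and equip $\Banach_2^k$ with the max-norm. Substituting $n = i-j$ in~\eqref{E:LinProc} gives $X_j = \som_n T_{n+j} Z_{-n}$, so that the stretch in question is again an infinite random sum,
\[
  (X_{-s}, \ldots, X_t) = \som_n R_n Z_{-n}, \qquad R_n z := (T_{n-s}z, \ldots, T_{n+t}z),
\]
with $R_n : \Banach_1 \to \Banach_2^k$ bounded linear and $(Z_{-n})_{n\in\ZZ}$ i.i.d.\ with the regularly varying law of spectral measure $\lambda$. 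Since $\norm{R_n} = \max_{-s\le j\le t}\norm{T_{n+j}}$, condition~\eqref{E:Resnick} yields $\som_n\norm{R_n}^\delta \le (t+s+1)\som_m\norm{T_m}^\delta < \infty$; and since $\int_{\sphere_1}\norm{R_n\theta}^\alpha\,\lambda(d\theta) \ge \int_{\sphere_1}\norm{T_n\theta}^\alpha\,\lambda(d\theta) = c_n$ (taking the coordinate $j=0$), we have $\som_n\int_{\sphere_1}\norm{R_n\theta}^\alpha\,\lambda(d\theta) \ge \som_n c_n > 0$. Hence Proposition~\ref{P:series} applies to $\som_n R_n Z_{-n}$.

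Consequently $(X_{-s},\ldots,X_t)$ is regularly varying in $\Banach_2^k$ with index $\alpha$. Writing $V(x) = \Pr(\norm{Z_0}>x)$ and recalling that, normalised by $V$, the limit measure of $Z_0$ is $(\nu_\alpha\otimes\lambda)\circ T^{-1}$ (with mass $1$ on $\{\norm{z}>1\}$; see~\eqref{E:lambda2mu}) and that the limit measure of $R_nZ_{-n}$ is its push-forward under $R_n$ (Proposition~\ref{P:linear:rv}), combining these with the tail asymptotics from Proposition~\ref{P:series} gives
\[
  \frac{1}{V(x)}\Pr[(X_{-s}/x,\ldots,X_t/x)\in\,\cdot\,]
  \to \som_n \bigl((\nu_\alpha\otimes\lambda)\circ T^{-1}\bigr)\circ R_n^{-1} \qquad (x\to\infty)
\]
in $\Mzero(\Banach_2^k)$. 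Applying Proposition~\ref{P:series} to the single term $X_0 = \som_n T_n Z_{-n}$ shows $\Pr(\norm{X_0}>x)/V(x)\to\som_n c_n > 0$, so $u\mapsto\Pr(\norm{X_0}>u)$ lies in $\RV_{-\alpha}$ and serves as the common auxiliary function; since $\Pr(\norm{X_0}>x)\sim(\som_m c_m)V(x)$, the previous display gives~\eqref{E:muk} for the stretch $(X_{-s},\ldots,X_t)$ with $\mu_k = (\som_m c_m)^{-1}\som_n\bigl((\nu_\alpha\otimes\lambda)\circ T^{-1}\bigr)\circ R_n^{-1}$. As $s$ and $t$ were arbitrary, $(X_t)_{t\in\ZZ}$ is jointly regularly varying with index $\alpha$, and Theorem~\ref{T:spectral} supplies the spectral process $(\Theta_t)_{t\in\ZZ}$.

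It remains to identify the law of $(\Theta_t)_{t\in\ZZ}$, for which I would use the passage from $\mu_k$ to the law $\lambda_{s,t}$ of $(\Theta_{-s},\ldots,\Theta_t)$ in the proof of Theorem~\ref{T:spectral}, namely $\lambda_{s,t}(B) = \mu_k(\{(x_{-s},\ldots,x_t):\norm{x_0}>1,\ (x_{-s}/\norm{x_0},\ldots,x_t/\norm{x_0})\in B\})$, see~\eqref{E:must2lambdast}. Substituting $\mu_k$ and expanding each push-forward over $(r,\theta)\in(0,\infty)\times\sphere_1$: the $0$-th coordinate of $rR_n\theta$ is $rT_n\theta$, so the factor $r$ cancels in every argument $rT_{n+j}\theta/(r\norm{T_n\theta})$, the indicator becomes $\1(r\norm{T_n\theta}>1)$, and $\int_0^\infty\1(r\norm{T_n\theta}>1)\,d(-r^{-\alpha}) = \norm{T_n\theta}^\alpha$. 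Thus, for bounded continuous $g$,
\[
  \E[g(\Theta_{-s},\ldots,\Theta_t)]
  = \frac{1}{\som_m c_m}\som_n \int_{\sphere_1} g\biggl(\frac{T_{n-s}\theta}{\norm{T_n\theta}},\ldots,\frac{T_{n+t}\theta}{\norm{T_n\theta}}\biggr)\norm{T_n\theta}^\alpha\,\lambda(d\theta),
\]
which by the definition~\eqref{E:LinProc:kappa_n} of $\kappa_n$, together with $c_n = \int_{\sphere_1}\norm{T_n\theta}^\alpha\,\lambda(d\theta)$ and $p_n = c_n/\som_m c_m$, equals $\som_n p_n\int g\,d\kappa_n = \int g\,d\kappa$. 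As this holds for all $s,t\ge 0$ and the finite-dimensional distributions determine the law on $\Banach_2^\ZZ$, the spectral process of $(X_t)_{t\in\ZZ}$ has law $\kappa = \som_n p_n\kappa_n$.

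The individual steps are routine; the point demanding the most care is keeping the two normalisations apart, since the vectorised sum is naturally normalised by $V(x) = \Pr(\norm{Z_0}>x)$ whereas the spectral process is read off after conditioning on $\norm{X_0}>u$, the two differing by the positive constant $\som_m c_m$. If one prefers not to invoke the appendix, Proposition~\ref{P:sum:inf} can be avoided by repeating, for the vectorised sum, the single-large-innovation decomposition from the proof of Proposition~\ref{P:series}.
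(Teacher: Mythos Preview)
Your proposal is correct and is precisely the natural way to make the omitted proof ``similar to the one of Proposition~\ref{P:series}'': you package the stretch $(X_{-s},\ldots,X_t)$ as an infinite random sum $\sum_n R_n Z_{-n}$ in $\Banach_2^k$ and then apply Proposition~\ref{P:series} as a black box rather than rerunning its proof. The verification of condition~\eqref{E:Resnick} for the $R_n$, the tail comparison $\Pr(\norm{X_0}>x)\sim(\sum_m c_m)V(x)$, and the polar computation identifying $\lambda_{s,t}$ via~\eqref{E:must2lambdast} are all correct; the only cosmetic point is that Proposition~\ref{P:series} as stated yields the spectral measure of the vectorised sum rather than the full limit measure $\mu_k$, but the latter is recovered immediately from the former and the tail constant through~\eqref{E:lambda2mu}, so no gap arises.
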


The proof of Proposition~\ref{P:LinProc} is similar to the one of Proposition~\ref{P:series} and is omitted. As in \eqref{E:series:lambda}, equation~\eqref{E:LinProc:kappa} constitutes a mixture representation of the distribution $\kappa$ of the spectral process $(\Theta_t)_{t \in \ZZ}$. The algorithm in \eqref{Algorithm:linear} can be adapted in the following way to produce a random draw from $\kappa$:
\begin{equation}
\label{algorithm:kappa}
\text{
\begin{minipage}[t]{0.87\textwidth}
\begin{list}{}{\setlength{\leftmargin}{3ex}}
\item[1.] Draw a random integer $N$ from $(p_n)_{n \in \ZZ}$.
\item[2.] Independently from $N$ and from each other, draw $\Theta^Z \sim \lambda$ and $U \sim \text{Uniform}(0, 1)$.
\item[3.] If $U \le \norm{T_N \Theta^Z}^\alpha / \norm{T_N}^\alpha$, then return $\Theta_t = T_{N+t} \Theta^Z / \norm{T_N \Theta^Z}$ for all $t \in \ZZ$ and stop.
\item[4.] Otherwise, go back to step 2.
\end{list}
\end{minipage}
}
\end{equation}

If $\Theta^Z$ denotes a $\sphere_1$-valued random element whose distribution is equal to the spectral measure $\lambda$ of $Z_n$, then \eqref{E:LinProc:kappa} can be written as
\begin{multline}
\label{E:LinProc:Theta}
  \E [ f(\Theta_{-s}, \ldots, \Theta_t) ] \\
  = \frac{1}{\sum_n \E [ \norm{T_n \Theta^Z}^\alpha ]}
  \sum_n \E \biggl[ f \biggl( \frac{T_{-s+n} \Theta^Z}{\norm{T_n \Theta^Z}}, \ldots, \frac{T_{t+n} \Theta^Z}{\norm{T_n \Theta^Z}} \biggr) \, \norm{T_n \Theta^Z}^\alpha \biggr]
\end{multline}
for nonnegative integer $s, t$ and for integrable $f : \Banach_2^{t+s+1} \to \RR$; as usual, the integrand on the right is to be interpreted as $0$ if $\norm{T_n \Theta^Z} = 0$.

\begin{example}[Joint survival functions]
\label{ex:LinProc:JointSurvfunc}
In Example~\ref{ex:JointSurvFunc}, the asymptotics of joint survival functions of certain random vectors associated to the time series $(X_t)_{t \in \ZZ}$ were calculated in terms of the spectral process. By equations \eqref{E:JointSurvFunc:dual}, \eqref{E:JointSurvFunc:norm} and \eqref{E:LinProc:Theta}, we find
\begin{align}
\label{E:LinProc:Theta:special:dual}
  \lim_{u \to \infty} \frac{\Pr( \forall i \in I : b_i^* X_i > u)}{\Pr(\norm{X_0} > u)}
  &= \frac{\sum_n \E [ \min \{ (b_i^* T_{i+n} \Theta^Z)_+^\alpha : i \in I \} ]}{\sum_n \E [ \norm{T_n \Theta^Z}^\alpha ]},\\
\label{E:LinProc:Theta:special:norm}
  \lim_{u \to \infty} \frac{\Pr( \forall i \in I : b_i \norm{X_i} > u )}{\Pr( \norm{X_0} > u )}
  &= \frac{\sum_n \E [ \min \{ b_i^\alpha \norm{T_{i+n} \Theta^Z}^\alpha : i \in I \} ]}{\sum_n \E [ \norm{T_n \Theta^Z}^\alpha ]},
\end{align}
valid for every finite set $I \subset \ZZ$ with $0 \in I$ and for all linear functionals $b_i^* \in \dual_2$ and positive constants $b_i$ ($i \in I$). If $I = \{0, h\}$ for some nonzero integer $h$, we obtain expressions for certain tail dependence coefficients and extremograms (Examples~\ref{ex:TailDependence} and \ref{ex:Extremogram}).
\end{example}

\begin{example}[Extremal indices]
\label{ex:LinProc:ExtremalIndex}
For $b^* \in \dual_2$ such that $\Pr(b^* T_n \Theta_Z > 0) > 0$ for some $n$, the extremal index $\theta(b^*)$ in \eqref{E:ExtremalIndex:dual} of $(b^* X_t)_{t \in \ZZ}$ is given by
\begin{equation}
\label{E:LinProc:ExtremalIndex:dual}
  \theta(b^*) 
  = 1 - \frac{\E [ \min \{ (b^* \Theta_0)_+^\alpha, \, \sup_{t \ge 1} (b^* \Theta_t)_+^\alpha \} ]}{\E [ (b^* \Theta_0)_+^\alpha ]}
  = \frac{\E [ \sup_n (b^* T_n \Theta^Z)_+^\alpha ]}{\sum_n \E [ (b^* T_n \Theta^Z)_+^\alpha ]}.
\end{equation}
The proof of the second equality is as follows. For the denominator, we have by \eqref{E:LinProc:Theta},
\[
  \E [ (b^* \Theta_0)_+^\alpha ]
  = \frac{\sum_n \E [ (b^* T_n \Theta^Z)_+^\alpha ]}{\sum_n \E [ \norm{T_n \Theta^Z}^\alpha ]}.
\]
For the numerator, we have by monotone convergence and \eqref{E:LinProc:Theta},
\begin{align*}
  \lefteqn{
  \E \biggl[ \min \biggl\{ (b^* \Theta_0)_+^\alpha, \, \sup_{t \ge 1} (b^* \Theta_t)_+^\alpha \biggr\} \biggr] 
  } \\
  &= \lim_{m \to \infty} \E \biggl[ \min \biggl\{ (b^* \Theta_0)_+^\alpha, \, \max_{1 \le t \le m} (b^* \Theta_t)_+^\alpha \biggr\} \biggr] \\
  &= \lim_{m \to \infty} \frac{\sum_n \E [ \min \{ (b^* T_n \Theta^Z)_+^\alpha, \max_{1 \le t \le m} (b^* T_{n+t} \Theta^Z)_+^\alpha \} ] }
    {\sum_n \E [ \norm{T_n \Theta^Z}^\alpha ]} \\
  &= \frac{\sum_n \E [ \min \{ (b^* T_n \Theta^Z)_+^\alpha, \sup_{t \ge 1} (b^* T_{n+t} \Theta^Z)_+^\alpha \} ] }
    {\sum_n \E [ \norm{T_n \Theta^Z}^\alpha ]}.
\end{align*}
For a nonnegative sequence $(a_n)_{n \in \ZZ}$ such that $\sum_n a_n < \infty$, one can show that
\[
  \sum_{n \in \ZZ} \min \biggl( a_n, \, \sup_{t \ge 1} a_{n+t} \biggr) = \sum_{n \in \ZZ} a_n - \sup_{n \in \ZZ} a_n.
\]
Combine the previous displays to arrive at \eqref{E:LinProc:ExtremalIndex:dual}.

By a similar argument, the extremal index in \eqref{E:ExtremalIndex:norm} of $(\norm{X_t})_{t \in \ZZ}$ is given by
\begin{equation}
\label{E:LinProc:ExtremalIndex:norm}
  \theta 
  = 1 - \E \biggl[ \min \biggl\{ 1, \sup_{t \ge 1} \norm{\Theta_t}_+^\alpha \biggr\} \biggr] 
  = \frac{\E [ \sup_n \norm{T_n \Theta^Z}^\alpha ]}{\sum_n \E [ \norm{T_n \Theta^Z}^\alpha ]}.
\end{equation}
\end{example}

\section{Examples of linear processes}
\label{S:Examples}

We conclude the paper with a number of special cases and examples of regularly varying linear processes. We strive to get explicit expressions for the spectral process and derived quantities.

\begin{example}[Isometries]
\label{ex:LinProc:Isometry}
Assume that the linear operators $T_i$ satisfy $\norm{T_i z} = \norm{T_i} \norm{z}$ for all $i \in \ZZ$ and all $z \in \Banach_1$, that is, the linear operators $T_i / \norm{T_i}$ are isometries, see Example~\ref{ex:series:Isometry}. The constants $c_n$ and $p_n$ in \eqref{E:series:c_n} and \eqref{E:series:p_n} respectively then are as given in equation~\eqref{E:series:Isometry:constants}. If $\norm{T_n} > 0$, the distribution $\kappa_n$ in \eqref{E:LinProc:kappa_n} on the sequence space $\Banach^\ZZ$ is the one of the random sequence
\[
  \biggl( \frac{T_{t+n}}{\norm{T_n}} \Theta^Z \biggr)_{t \in \ZZ}.
\]
By Proposition~\ref{P:LinProc}, the spectral process of $(X_t)_{t \in \ZZ}$ is given by
\begin{equation}
\label{E:LinProc:Isometry:spectral}
  (\Theta_t)_{t \in \ZZ} \eqd \biggl( \frac{T_{t+N}}{\norm{T_N}} \Theta^Z \biggr)_{t \in \ZZ}
\end{equation}
with $N$ an integer-valued random variable independent of $\Theta^Z$ and with distribution given by $\Pr(N = n) = p_n$ for $n \in \ZZ$. For instance, the extremal-index formula \eqref{E:LinProc:ExtremalIndex:norm} becomes
\[
%\label{E:LinProc:Isometry:ExtremalIndex}
  \theta = \frac{\sup_n \norm{T_n}^\alpha}{\sum_n \norm{T_n}^\alpha}.
\]
\end{example}

\begin{example}[Moving averages]
\label{ex:LinProc:MA}
Example~\ref{ex:LinProc:Isometry} applies to infinite-order moving averages of the type
\[
  X_t = \som_i \, Z_{t-i} \psi_i, \qquad t \in \ZZ,
\]
for random variables $Z_n$ and deterministic $\psi_i \in \Banach$ and also to moving averages of the type
\[
  X_t = \som_i \, a_i Z_{t-i}, \qquad t \in \ZZ,
\]
for constants $a_i \in \RR$ and $\Banach$-valued random elements $Z_n$; see Examples~\ref{ex:series:LinComb1} and~\ref{ex:series:LinComb2} respectively. In case $\Banach = \RR$, these two cases coincide and we recover the familiar framework of a linear time series in $\RR$ with heavy-tailed innovations: see for instance \cite{DR85}, \citet[Section~4.5]{RES87}, or \citet[Section~5.5.1]{EKM97}. We find that the spectral measure of $X_t$ is determined by
\begin{align*}
  \lim_{u \to \infty} \frac{\Pr(X_t > u)}{\Pr(|X_t| > u)} 
  &= \Pr(\Theta_0 = +1) \\
  &= \Pr[ \sign(a_N) \, \Theta_Z = +1 ] 
  = \frac{p \, \sum_n (a_n)_+^\alpha + q \, \sum_n (a_n)_-^\alpha}{\sum_n |a_n|^\alpha},
\end{align*}
where $p = \Pr(\Theta^Z = +1) = 1-q$ and $(x)_- = \max(-x, 0)$. More generally, the spectral process of $(X_t)_{t \in \ZZ}$ is given by
\begin{equation}
\label{E:LinProc:MA:spectral}
  (\Theta_t)_{t \in \ZZ} \eqd \biggl( \frac{a_{t+N}}{|a_N|} \Theta^Z \biggr)_{t \in \ZZ},
\end{equation}
with $N$ an integer-valued random variable independent of $\Theta^Z$ and with distribution given by $\Pr(N = n) = |a_n|^\alpha / \sum_k |a_k|^\alpha$ for $n \in \ZZ$.

By \eqref{E:TailDependence:dual} and \eqref{E:LinProc:MA:spectral}, the coefficient of tail dependence between $X_0$ and $X_h$ (lag $h \in \NN$) is equal to
\begin{multline*}
  \lim_{u \to \infty} \Pr(X_h > u \mid X_0 > u) \\
  = \frac{p \, \sum_n \min\{ (a_{n+h})_+^\alpha, \, (a_n)_+^\alpha \} + q \, \sum_n \min\{ (a_{n+h})_-^\alpha, \, (a_n)_-^\alpha \}}{p \, \sum_n (a_n)_+^\alpha + q \, \sum_n (a_n)_-^\alpha},
\end{multline*}
provided the denominator is positive. The formula in \eqref{E:LinProc:ExtremalIndex:dual} for the extremal index of the series $(X_t)_{t \in \ZZ}$ specializes to
\[
  \theta(+1) = \frac{p \, \sup_n (a_n)_+^\alpha + q \, \sup_n (a_n)_-^\alpha}{p \, \sum_n (a_n)_+^\alpha + q \, \sum_n (a_n)_-^\alpha},
\]
provided the denominator is positive. This expression coincides with the one in \citet[p.~415]{EKM97}.
\end{example}

\begin{example}[AR(1) processes]
\label{ex:AR}
When $\Banach_1 = \Banach_2 = \Banach$, a useful class of linear processes is given by the one of autoregressive processes of order one, AR(1) in short, defined by
\begin{equation}
\label{E:AR}
  X_t = TX_{t-1} + Z_t, \qquad t \in \ZZ.
\end{equation}
The innovations $Z_t$ are independent and identically distributed in $\Banach$ and $T : \Banach \to \Banach$ is a bounded linear operator. If there exists an integer $m \ge 1$ such that $\norm{T^m} < 1$, then the sequence $\norm{T^n}$ decays at a geometric rate as $n \to \infty$, so that \eqref{E:Resnick} is fulfilled for every $\delta > 0$. Hence, if the common distribution of the $Z_t$ is regularly varying with index $\alpha$, the AR(1) equation has a stationary solution given by
\[
  X_t = \sum_{n \ge 0} T^n Z_{t-n}, \qquad t \in \ZZ,
\]
where $T^0$ denotes the identity operator. Hence we are in the situation of Proposition~\ref{P:LinProc} with $T_n = T^n$ if $n \ge 0$ and $T_n = 0$ if $n < 0$. The tail of $\norm{X_t}$ satisfies
\[
  \lim_{x \to \infty} \frac{\Pr( \norm{X_t} > x )}{\Pr( \norm{Z_0} > x)} = \sum_{n \ge 0} \int_{\sphere} \norm{T^n \theta}^\alpha \, \lambda(d \theta)
\]
where $\lambda$ is the spectral measure of the innovations $Z_t$. In view of the term $n = 0$, the sum on the right-hand side cannot be smaller than unity. As a consequence, $(X_t)_{t \in \ZZ}$ is regularly varying with index $\alpha > 0$ and with spectral process as described in Proposition~\ref{P:LinProc}. Note that $p_n = 0$ for all $n < 0$ and that if $p_{n_0} = 0$ for some integer $n_0 \ge 1$, then necessarily $p_n = 0$ for all $n \ge n_0$.

The algorithm in \eqref{algorithm:kappa} for generating a random draw from the spectral process $(\Theta_t)_{t \in \ZZ}$ can be written as follows:
\begin{equation}
\label{algorithm:AR}
\text{
\begin{minipage}[t]{0.87\textwidth}
\begin{list}{}{\setlength{\leftmargin}{3ex}}
\item[1.] Draw a random nonnegative integer $N$ from $(p_n)_{n \ge 0}$.
\item[2.] Independently from $N$ and from each other, draw $\Theta^Z \sim \lambda$ and $U \sim \text{Uniform}(0, 1)$.
\item[3.] If $U \le \norm{T^N \Theta^Z}^\alpha / \norm{T^N}^\alpha$, then return
  \begin{align}
    \Theta_{-N} &= \frac{\Theta^Z}{\norm{T^N \Theta^Z}}, &
    \Theta_{-N+h} &= 
    \begin{cases}
      T^h \Theta_{-N} & \text{if $h > 0$,} \nonumber \\
      0 & \text{if $h < 0$.}
    \end{cases}
  \end{align}
\item[4.] Otherwise, go back to step 2.
\end{list}
\end{minipage}
}
\end{equation}
In step~3, the operator norm $\norm{T^N}$ may be replaced by the possibly smaller essential supremum of $\norm{T^N \Theta^Z}$, see \eqref{E:EssSup}. Note that the \emph{forward} spectral process satisfies $\Theta_t = T^t \Theta_0$ for $t \ge 0$.

In the special case that the operator $T$ satisfies $\norm{T^n \Theta^Z} = \norm{T^n}$ almost surely for all $n \in \ZZ$, then $p_n = \norm{T^n}^\alpha / \sum_{k \ge 0} \norm{T^k}^\alpha$ and the spectral process simplifies to
\begin{equation}
\label{E:AR:spectral:simple}
  (\Theta_t)_{t \in \ZZ} \eqd \biggl( \frac{T^{N+t}}{\norm{T^N}}  \, \Theta^Z \, \1_{\{t \ge - N\}} \biggr)_{t \in \ZZ}
\end{equation}
where $\Theta^Z \sim \lambda$ and $N \sim (p_n)_{n \ge 0}$ are independent.
\end{example}

\begin{example}[Sequence spaces]
\label{ex:Sequence}
Let $\Banach$ be equal to the real sequence space $\{ (x_n)_{n \ge 0} : \sum_n w_n |x_n| < \infty \}$, where $(w_n)_{n \ge 0}$ is a positive sequence satisfying $\sum_n w_n^\delta < \infty$ for some $\delta \in (0, 1)$; without loss of generality, assume $w_0 = 1$. Let $(\zeta_t)_{t \in \ZZ}$ be an i.i.d.\ sequence of random variables whose common distribution is regularly varying with index $\alpha$, where $\alpha > \delta$. For $n \in \ZZ$, let $I_n : \RR \to \Banach$ be the embedding $I_n z = z \, e_n$, with $e_n = (\delta_{n,k})_{k \ge 0}$ the $n$th unit vector in $\Banach$. Put
\begin{equation}
\label{E:Sequence}
  X_t = (\zeta_t, \zeta_{t-1}, \zeta_{t-2}, \ldots) = \sum_{n \ge 0} I_n \zeta_{t-n}, \qquad t \in \ZZ.
\end{equation}
Example~\ref{ex:LinProc:Isometry} applies with $\norm{I_n} = \norm{e_n} = w_n$. Alternatively, let $S : \Banach \to \Banach$ be the shift operator defined by
\begin{equation*}
\label{E:shift}
  S(x_0, x_1, x_2, \ldots) = (0, x_1, x_2, \ldots).
\end{equation*}
Since $I_n = S^n \circ I_0$, we also have the AR(1) representation
\begin{equation*}
\label{E:Sequence:AR}
  X_t = S X_{t-1} + Z_t = \som_{n \ge 0} \, S^n \, Z_{t-n}, \qquad t \in \ZZ,
\end{equation*}
with $Z_t = I_0 \zeta_t = (\zeta_t, 0, 0, \ldots)$. By an application of Proposition~\ref{P:linear:rv}, the random elements $Z_t$ are regularly varying in $\Banach$ with index $\alpha$ and spectral measure $\lambda_Z = \law(\Theta^\zeta, 0, 0, \ldots)$, where $\Theta^\zeta$ is a random variable in $\{-1, +1\}$ with distribution equal to the spectral measure of $\zeta_t$; see also Example~\ref{ex:Isometry}. However, if $w_{n+m} / w_n \to 1$ as $n \to \infty$ for every positive integer $m$, then $\norm{S^m} = 1$ for every such $m$ and condition~\eqref{E:Resnick} does not apply. Still, as the distribution of $Z_t$ is concentrated on the closed linear subspace $\Banach_0 = \operatorname{Im} I_0 = \{(x, 0, 0, \ldots) : x \in \RR\}$, condition~\eqref{E:Resnick:0} is verified with $\norm{S^m}_0 = w_m$ (recall $w_0 = 1$). We find that $(X_t)_{t \in \RR}$ is regularly varying with index $\alpha$, and by \eqref{E:AR:spectral:simple} its spectral process is given by
\begin{equation}
\label{E:Sequence:spectral}
  (\Theta_t)_{t \in \ZZ} \eqd \biggl( \frac{I_{N+t} \Theta^\zeta}{w_N} \, \1_{\{t \ge -N\}} \biggr)_{t \in \ZZ},
\end{equation}
with $\Theta^\zeta$ as above, independent from the positive-integer valued random variable $N$ with distribution $\Pr(N = n) = p_n = w_n^\alpha / \sum_{k \ge 0} w_k^\alpha$.

If $(a_n)_{n \ge 0}$ is a real sequence such that $\sup_n |a_n| / w_n < \infty$, then we can define a bounded linear functional $A$ on $\Banach$ by $Ax = \sum_n a_n x_n$. Applying this functional to the sequence $X_t$ in \eqref{E:Sequence:AR} yields
\[
  A X_t = \sum_{i \ge 0} a_i \zeta_{t-i},
\]
which, up to a change in notation, is exactly the real-valued linear process in Example~\ref{ex:LinProc:MA}. The results in that example can now also be recovered through Proposition~\ref{P:linear:series}. We leave the details for the reader.
\end{example}

%\section{Conclusion}
%\label{S:conclusion}

%To be written.

%%%%%%%%%%%%%%%%%%%%%%%%%%%%%%%%%
%%%%%%%%%%%%%%%%%%%%%%%%%%%%%%%%%

\appendix

\section{Convergence of measures}
\label{A:aux}

\begin{lemma}
\label{L:CMT}
Assume $\mu_n \to \mu$ in $\Mzero(\Banach)$ as $n \to \infty$ and let $f : \Bzero \to \RR$ be bounded, measurable, and vanish on $B_{0,u}$ for some $u > 0$. Let $D$ be the discontinuity set of $f$. If $\mu(D) = 0$, then $\int f \, d\mu_n \to \int f \, d\mu$ as $n \to \infty$.
\end{lemma}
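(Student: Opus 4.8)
The plan is to deduce the claim from the ordinary mapping theorem for weak convergence of finite measures, by restricting all the measures involved to a region $\{x \in \Banach : \norm{x} > r\}$ for a suitably chosen radius $0 < r < u$.

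First I would invoke the characterization of $\Mzero$-convergence recalled in Section~\ref{S:RV} (see \citet[Section~2]{HL06}): if $\mu_n \to \mu$ in $\Mzero$, then for all but at most countably many $r > 0$ the restricted finite measures $\mu_n(\,\cdot \cap \{\norm{\cdot} > r\})$ converge weakly to $\mu(\,\cdot \cap \{\norm{\cdot} > r\})$. Since only countably many radii are excluded, I can choose such an $r$ that moreover satisfies $r < u$. Write $S = \{x \in \Banach : \norm{x} > r\}$, an open set, and let $\mu_n^{(r)}$ and $\mu^{(r)}$ denote the restrictions of $\mu_n$ and $\mu$ to $S$; these are finite measures by the definition of $\Mzero$, and $\mu_n^{(r)} \to \mu^{(r)}$ weakly by the choice of $r$.

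Since $f$ vanishes on $B_{0,u}$ and $\{\norm{\cdot} \le r\} \subset B_{0,u}$, the function $f$ is supported in $S$, so that $\int f \, d\mu_n = \int f \, d\mu_n^{(r)}$ and $\int f \, d\mu = \int f \, d\mu^{(r)}$. Viewing $S$ as a metric space in its own right, $f$ restricted to $S$ is bounded and measurable, and, $S$ being open, its set of discontinuity points relative to $S$ equals $D \cap S$, which is a $\mu^{(r)}$-null set by hypothesis. The continuous mapping theorem for weak convergence of finite measures (see \citealt[Section~2]{HL06}, or \citealt{BIL68}), together with the boundedness of $f$, then yields $\int f \, d\mu_n^{(r)} \to \int f \, d\mu^{(r)}$, and combining this with the two identities above completes the proof.

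The argument is essentially routine once the $\Mzero$-to-weak-convergence passage is available; the only point requiring any attention is the selection of the auxiliary radius $r$, which must be taken both strictly below $u$ (so that restricting to $S$ discards no mass of $f$) and outside the countable set of radii at which weak convergence of the restrictions may fail.
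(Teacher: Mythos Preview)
Your proposal is correct and follows essentially the same route as the paper: pick $r \in (0,u)$ outside the exceptional countable set (equivalently, with $\mu(\partial B_{0,r}) = 0$), restrict all measures to the complement of the $r$-ball so that $\Mzero$-convergence becomes ordinary weak convergence of finite measures, and then apply the continuous mapping theorem using $\mu(D)=0$. The only cosmetic difference is that the paper restricts to the closed set $\Banach \setminus B_{0,r}$ while you restrict to the open set $\{\norm{\cdot} > r\}$; since the sphere $\{\norm{\cdot} = r\}$ is $\mu$-null for your choice of $r$ and $f$ vanishes there anyway, this is immaterial.
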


\begin{proof}
Let $r \in (0, u)$ be such that $\mu(\boundary B_{0,r}) = 0$. Let $\mu_n^{(r)}$ and $\mu_n^{(r)}$ denote the restrictions of $\mu_n$ and $\mu$ to $\Banach \setminus B_{0,r}$, respectively. By (the proof of) Theorem~2.2 in \cite{HL06}, we have $\mu_n^{(r)} \to \mu^{(r)}$ weakly. By the continuous mapping theorem for weak convergence of finite measures,
\[
  \int_\Bzero f \, d\mu_n 
  = \int_{\Banach \setminus B_{0,r}} f \, d\mu_n^{(r)}
  \to \int_{\Banach \setminus B_{0,r}} f \, d\mu^{(r)} 
  = \int_\Bzero f \, d\mu \qquad (n \to \infty). \qedhere
\]
\end{proof}

%%%%%%%%%%%%%%%%%%%%%%%%%%%%%%%%%

\begin{lemma}
\label{L:product}
Let $(S,d)$ be a separable metric space. Let $(X_n, Y_n)$ and $(X, Y)$ be random elements in $\RR \times S$. Then $(X_n, Y_n) \dto (X, Y)$ if and only if
\begin{equation}
\label{E:product}
  \E [ \1(X_n \le x) \, g(Y_n) ] \to \E [ \1(X \le x) \, g(Y) ] \qquad (n \to \infty)
\end{equation}
for every continuity point $x \in \RR$ of $X$ and every bounded and continuous function $g : S \to \RR$.
\end{lemma}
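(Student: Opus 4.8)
The plan is to treat the two implications separately. The ``only if'' direction I would get straight from the continuous mapping theorem for weak convergence: given a continuity point $x$ of $X$ and a $g \in C_b(S)$, the function $h(s,t) = \1(s \le x)\,g(t)$ on $\RR \times S$ is bounded, and any point at which $h$ is discontinuous must have first coordinate $x$ (since $s \mapsto \1(s \le x)$ is continuous at every $s \ne x$ and $g$ is continuous throughout), so the discontinuity set of $h$ is contained in $\{x\} \times S$, which carries mass $\Pr(X = x) = 0$ under the law of $(X,Y)$. Hence $(X_n,Y_n) \dto (X,Y)$ forces $\E[h(X_n,Y_n)] \to \E[h(X,Y)]$, which is exactly \eqref{E:product}.

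For the ``if'' direction I would first note that taking $g \equiv 1$ in \eqref{E:product} gives $\Pr(X_n \le x) \to \Pr(X \le x)$ at every continuity point $x$ of $X$, that is, $X_n \dto X$ in $\RR$. Writing $C$ for the (co-countable) set of continuity points of $X$, it follows that for $a \in C$ with $-a \in C$ one has $\Pr(X_n \notin (-a,a]) \to \Pr(X \notin (-a,a])$, and the latter tends to $0$ as $a \to \infty$ through $C$. Since weak convergence on the separable metric space $\RR \times S$ is implied by convergence of expectations of all bounded Lipschitz functions (the Portmanteau theorem, \citealp{BIL68}), it suffices to show $\E[h(X_n,Y_n)] \to \E[h(X,Y)]$ for an arbitrary bounded Lipschitz $h$, say $\norm{h}_\infty \le M$ with Lipschitz constant $L$. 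The device is to truncate the first coordinate to a compact interval and then replace $h$ by a function that is a step function in that coordinate: given $\eps, \delta > 0$, pick $a \in C$ with $-a \in C$ and $\Pr(X \notin (-a,a]) \le \eps$, and a partition $-a = x_0 < x_1 < \cdots < x_m = a$ with all $x_j \in C$ and mesh below $\delta$. Because $h$ is $L$-Lipschitz in its first argument uniformly in the second,
\[
  \Bigl| h(s,t)\,\1\bigl(s \in (-a,a]\bigr) - \sum_{j=1}^m \1\bigl(s \in (x_{j-1},x_j]\bigr) h(x_j,t) \Bigr| \le L\delta
\]
for all $(s,t) \in \RR \times S$.

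Next I would pass to the limit termwise: applying \eqref{E:product} at $x_j$ and at $x_{j-1}$ with $g = h(x_j,\cdot) \in C_b(S)$ yields $\E[\1(X_n \in (x_{j-1},x_j])\,h(x_j,Y_n)] \to \E[\1(X \in (x_{j-1},x_j])\,h(x_j,Y)]$ for each $j$. Summing the finitely many indices $j$, and combining the displayed inequality on both the $(X_n,Y_n)$ side and the $(X,Y)$ side with the bounds $|\E h(X_n,Y_n) - \E[h(X_n,Y_n)\1(X_n \in (-a,a])]| \le M\,\Pr(X_n \notin (-a,a])$ and its analogue for $(X,Y)$, one arrives at $\limsup_n |\E h(X_n,Y_n) - \E h(X,Y)| \le 2M\eps + 2L\delta$. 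Letting $\delta \downto 0$ and then $\eps \downto 0$ finishes the proof.

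The main obstacle is that neither $\RR$ nor $S$ is compact, so $h$ cannot be approximated uniformly by a finite combination of products $\1(s \le x)g(t)$. The non-compactness of $\RR$ is handled by the truncation to $(-a,a]$, which is harmless precisely because $X_n \dto X$ is already in hand; the non-compactness of $S$ is handled by using that $h$ is Lipschitz, which makes the step-function approximation in the first coordinate uniform over all $t \in S$. One also has to check the harmless technicality that the partition points, and in particular $\pm a$, can be taken in the co-countable set $C$, so that every indicator that appears genuinely has a limit — this is the only place where \eqref{E:product} being assumed only at continuity points is actually felt.
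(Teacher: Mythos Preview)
Your proof is correct. The ``only if'' half matches the paper's one-line appeal to the continuous mapping theorem. For the ``if'' half the two arguments diverge in structure, though they share the core device of replacing the first coordinate by step functions with jumps at continuity points of $X$.

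The paper first shows tightness of $(X_n,Y_n)$ by establishing the marginal convergences $X_n \dto X$ (from $g\equiv 1$) and $Y_n \dto Y$ (by letting $x \to \infty$), and then argues uniqueness of subsequential limits: any limit $(X',Y')$ must satisfy $\E[f(X')g(Y')]=\E[f(X)g(Y)]$ for all bounded Lipschitz $f,g$, invoking the product-measure criterion (Lemma~1.4.2 in \citealp{VDV96}) that such expectations determine the joint law. Your argument is more direct: you never invoke tightness or Prohorov, and you work with a general bounded Lipschitz $h$ on $\RR \times S$ rather than a product $f(s)g(t)$, using Lipschitz continuity in the first coordinate to make the step-function approximation uniform over $t \in S$. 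This buys a self-contained proof with fewer external lemmas; the paper's route is terser on the page but leans on the tightness/identification split and the cited product criterion.
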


\begin{proof}
The `only if' part is a special case of the continuous mapping theorem. So assume \eqref{E:product} holds. Taking $g \equiv 1$ yields $X_n \dto X$. Taking $x$ arbitrarily large so that $\Pr(X > x)$ is arbitrarily small yields $Y_n \dto Y$. As a consequence, the sequence $(X_n, Y_n)$ is tight. It remains to show that the joint distribution of $(X, Y)$ is determined by expectations as in the right-hand side \eqref{E:product}. By Lemma~1.4.2 in \citet{VDV96}, the joint distribution of $(X, Y)$ is determined by expectations of the form $\E [ f(X) \, g(Y) ]$ with $f : \RR \to \RR$ and $g : S \to \RR$ nonnegative, Lipschitz continuous, and bounded. It then suffices to write $f$ as the limit of an increasing sequence of step functions whose jump locations are continuity points of $X$.
\end{proof}

%%%%%%%%%%%%%%%%%%%%%%%%%%%%%%%%%

\section{Tails of random series}
\label{A:tails}

The following result extends Lemma~4.24 in \cite{RES87}.

\begin{proposition}
\label{P:sum:inf}
Let $(Z_i)_{i \in \Z}$ be a sequence of independent and identically distributed random variables taking values in a Banach space $\B_1$. Let $\B_2$ be another Banach space and let $T_i : \B_1 \to \B_2$, $i \in \Z$, be bounded linear operators. Put $V(x) = \Pr(\norm{Z_i} > x)$. If 
\begin{itemize}
\item[(i)] $V \in \RV_{-\alpha}$ for some $\alpha > 0$,
\item[(ii)] $\lim_{x \to \infty} \Pr( \norm{T_i Z_i} > x ) / V(x) = a_i \in [0, \infty)$ exists for every $i \in \Z$,
\item[(iii)] $\som_i \norm{T_i}^\delta < \infty$ for some $0 < \delta < \min(\alpha, 1)$,
\end{itemize}
then the series $\sum_i T_i Z_i$ is almost surely absolutely convergent and
\begin{multline}
\label{E:sum:inf}
  \lim_{x \to \infty} \frac{1}{V(x)} \E \bigl| \1 ( \norm{\som_i T_i Z_i} > x ) - \som_i \1 ( \norm{T_i Z_i} > x ) \bigr| \\
  = \lim_{x \to \infty} \frac{1}{V(x)} \E \bigl| \1 ( \som_i \norm{T_i Z_i} > x ) - \som_i \1 ( \norm{T_i Z_i} > x ) \bigr|
  = 0.
\end{multline}
As a consequence,
\begin{multline}
\label{E:sum:inf:tail}
  \lim_{x \to \infty} \frac{\Pr ( \norm{\som_i T_i Z_i} > x )}{V(x)} 
  = \lim_{x \to \infty} \frac{\Pr ( \som_i \norm{T_i Z_i} > x )}{V(x)} \\
  = \lim_{x \to \infty} \frac{\som_i \Pr ( \norm{T_i Z_i} > x )}{V(x)} = \som_i a_i < \infty.
\end{multline}
\end{proposition}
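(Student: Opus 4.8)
The plan is to follow the proof of Lemma~4.24 in \citet{RES87} — the single-big-jump principle — taking care of the infinitely many summands. First I would settle convergence and the elementary part of \eqref{E:sum:inf:tail}. Since $V \in \RV_{-\alpha}$ with $\alpha > \delta$, we have $\E [\norm{Z_0}^\delta] = \int_0^\infty V(x^{1/\delta}) \, dx < \infty$, so by subadditivity of $t \mapsto t^\delta$ on $[0,\infty)$,
\[
  \E \bigl[ \bigl( \som_i \norm{T_i Z_i} \bigr)^\delta \bigr] \le \som_i \norm{T_i}^\delta \, \E [\norm{Z_0}^\delta] < \infty ,
\]
and $\som_i T_i Z_i$ converges absolutely almost surely. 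Potter's bounds applied to $\Pr( \norm{T_i Z_i} > x ) \le V( x / \norm{T_i} )$, together with $\sup_i \norm{T_i} < \infty$, give, for $x$ past some threshold and uniformly in $i$, $\Pr( \norm{T_i Z_i} > x ) / V(x) \le C \norm{T_i}^{\alpha - \eta}$ for any fixed $\eta \in (0, \alpha - \delta)$; as $\som_i \norm{T_i}^{\alpha - \eta} < \infty$, assumption~(ii) and dominated convergence yield $\som_i \Pr( \norm{T_i Z_i} > x ) / V(x) \to \som_i a_i < \infty$, the last equality in \eqref{E:sum:inf:tail}. This domination is the workhorse for making index tails negligible below.

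Next, for a \emph{finite} index set $F$ I would establish the single-big-jump estimate
\[
  \frac{1}{V(x)} \E \Bigl\lvert \1\bigl( \norm{\som_{i \in F} T_i Z_i} > x \bigr) - \som_{i \in F} \1\bigl( \norm{T_i Z_i} > x \bigr) \Bigr\rvert \to 0 \qquad (x \to \infty) ,
\]
and its analogue with $\norm{\som_{i \in F} T_i Z_i}$ replaced by $\som_{i \in F} \norm{T_i Z_i}$. The key input is that a double exceedance is negligible: by independence and~(ii), $\Pr( \norm{T_i Z_i} > x, \, \norm{T_j Z_j} > x ) = \Pr( \norm{T_i Z_i} > x )\,\Pr( \norm{T_j Z_j} > x ) = o(V(x))$ for $i \ne j$. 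With this, the claim reduces, by induction on $\lvert F \rvert$ from the two-summand case, to \citet[Lemma~4.24]{RES87}; in particular $\Pr( \norm{\som_{i \in F} T_i Z_i} > x ) / V(x) \to \som_{i \in F} a_i$.

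The heart of the matter is the uniform negligibility of the remainder over $F^c$: for every $\eps > 0$,
\[
  \lim_{F \uparrow \Z} \, \limsup_{x \to \infty} \, \frac{1}{V(x)} \Pr\bigl( \som_{i \notin F} \norm{T_i Z_i} > \eps x \bigr) = 0 ,
\]
the corresponding statement for $\som_{i \notin F} \Pr( \norm{T_i Z_i} > \eps x )$ being immediate from the Potter domination since $\som_{i \notin F} \norm{T_i}^{\alpha - \eta} \to 0$. Writing $W_i := \norm{T_i Z_i}$ and truncating at level $\gamma x$,
\[
  \Pr\bigl( \som_{i \notin F} W_i > \eps x \bigr) \le \Pr( \exists\, i \notin F : W_i > \gamma x ) + \Pr\bigl( \som_{i \notin F} W_i \1( W_i \le \gamma x ) > \eps x \bigr) .
\]
The first term is bounded by $\som_{i \notin F} V( \gamma x / \norm{T_i} )$, which is $O(V(x))$ times a quantity vanishing as $F \uparrow \Z$ (Potter); for the second, whose summands are independent and bounded by $\gamma x$, I would subtract the mean — which is $o(x)$ because $\E[ W_i \1( W_i \le \gamma x ) ] \le (\gamma x)^{1-\delta}\, \E[W_i^\delta]$ — and apply Rosenthal's inequality with an exponent $p > 2\alpha / \delta$, estimating the variance term by $\E[ W_i^2 \1( W_i \le \gamma x ) ] \le (\gamma x)^{2-\delta}\, \E[W_i^\delta]$ and, via Karamata's theorem, the $p$-th moment term by $\E[ W_i^p \1( W_i \le \gamma x ) ] \le C_p (\gamma x)^p\, V( \gamma x / \norm{T_i} )$. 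After dividing by $V(x)$, the variance part is of order $x^{\alpha - \delta p/2} / L(x) \to 0$ (with $V(x) = x^{-\alpha} L(x)$) and the $p$-th moment part of order $\gamma^{p-\alpha}$ times a tail of $\som_i \norm{T_i}^{\alpha - \eta}$; sending $x \to \infty$ and then $F \uparrow \Z$ finishes this step. This truncation is where I expect the main difficulty, for a structural reason: the only moment of $Z_0$ available has order $\delta < \alpha$, so a bare Markov bound with exponent $\delta$ decays only like $x^{-\delta}$, hopelessly slow against $V(x) \asymp x^{-\alpha}$; it is precisely truncation at $\gamma x$ together with Karamata's theorem for the truncated high moments that reconciles the wish for a large Markov exponent with the absence of high moments of $Z_0$.

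With the three ingredients in hand, assembly is routine. From $\norm{\som_{i \in F} T_i Z_i} - \som_{i \notin F} \norm{T_i Z_i} \le \norm{\som_i T_i Z_i} \le \som_i \norm{T_i Z_i} \le \som_{i \in F} \norm{T_i Z_i} + \som_{i \notin F} \norm{T_i Z_i}$, one sandwiches $\Pr( \norm{\som_i T_i Z_i} > x )$ and $\Pr( \som_i \norm{T_i Z_i} > x )$, for fixed $h \in (0,1)$, between $\Pr( \norm{\som_{i \in F} T_i Z_i} > (1+h)x ) - \Pr( \som_{i \notin F} \norm{T_i Z_i} > hx )$ and $\Pr( \som_{i \in F} \norm{T_i Z_i} > (1-h)x ) + \Pr( \som_{i \notin F} \norm{T_i Z_i} > hx )$; dividing by $V(x)$, using $V \in \RV_{-\alpha}$ and the finite-$F$ limits of the second paragraph, then letting $x \to \infty$, $F \uparrow \Z$, $h \downto 0$ in that order, identifies both ratios with $\som_i a_i$, which gives \eqref{E:sum:inf:tail}. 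Finally \eqref{E:sum:inf} follows from the same decompositions together with the identity $\lvert \1(A) - \som_i \1(B_i) \rvert = \som_i \1(B_i) - \1(\bigcup_i B_i) + \1(A \cap \bigcap_i B_i^c)$ with $A = \{ \norm{\som_i T_i Z_i} > x \}$ and $B_i = \{ \norm{T_i Z_i} > x \}$: the first two terms have combined expectation at most $\som_{i<j} \Pr(B_i)\Pr(B_j) = o(V(x))$, and the last — the total being large while no single term is — is shown to be $o(V(x))$ by passing to $F$, invoking the finite-$F$ estimate at level $(1-h)x$, controlling the coordinates in $(\,(1-h)x,\,x\,]$ by Potter, handling $F^c$ by the crux, and letting $x \to \infty$, $F \uparrow \Z$, $h \downto 0$. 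The case of $\som_i \norm{T_i Z_i}$ is identical and, being monotone in $F$, simpler.
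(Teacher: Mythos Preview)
Your architecture---reduce to a finite index set $F$, control the remainder over $F^c$, then assemble---is exactly the paper's. Two remarks, one on economy and one on a genuine slip.

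\textbf{On the remainder.} You work hard with truncation, Rosenthal's inequality, and Karamata to show $\limsup_x \Pr(\som_{i\notin F}\norm{T_iZ_i}>\eps x)/V(x)\to 0$ as $F\uparrow\Z$. The paper does this in one line: since $\som_{i\notin F}\norm{T_iZ_i}\le\som_{i\notin F}\norm{T_i}\,\norm{Z_i}$ is a \emph{scalar}-weighted sum of i.i.d.\ nonnegative \emph{real} random variables with regularly varying tail, Resnick's Lemma~4.24 applies verbatim and gives $\limsup_{y}\Pr(\som_{|i|>m}\norm{T_i}\,\norm{Z_i}>y)/V(y)\le\som_{|i|>m}\norm{T_i}^\alpha$, which vanishes as $m\to\infty$. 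Your Rosenthal route is correct but unnecessary; nothing in the Banach-space setting obstructs the scalar lemma at this point, because the remainder has already been majorised by a real series.

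\textbf{On the identity.} Your formula
\[
  \bigl|\1(A)-\som_i\1(B_i)\bigr|=\som_i\1(B_i)-\1\bigl({\textstyle\bigcup_i}B_i\bigr)+\1\bigl(A\cap{\textstyle\bigcap_i}B_i^c\bigr)
\]
is false: it drops the term $\1(A^c\cap\bigcup_iB_i)$, the event that some $\norm{T_iZ_i}>x$ while $\norm{\som_iT_iZ_i}\le x$ through cancellation in $\B_2$. (Check the case where exactly one $B_j$ holds and $A$ does not: your right-hand side is $0$, the left is $1$.) This term is not automatically $o(V(x))$ and must be argued separately; indeed the paper's two-summand lemma treats both $(\,\cdot\,)_+$ and $(\,\cdot\,)_-$ parts precisely to capture this direction. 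The paper sidesteps the issue altogether by splitting via the triangle inequality,
\[
  \bigl|\1(\norm{\som_iX_i}>x)-\som_i\1(\norm{X_i}>x)\bigr|
  \le \bigl|\1(\norm{\som_iX_i}>x)-\1(\norm{\som_{|i|\le m}X_i}>x)\bigr|
  + \text{(finite-$m$ term)} + \som_{|i|>m}\1(\norm{X_i}>x),
\]
and controlling each piece directly. Your identity \emph{is} correct for the $\som_i\norm{T_iZ_i}$ version (there $B_j\subset A$), which is presumably why the slip went unnoticed; for $\norm{\som_iT_iZ_i}$ you need either the missing term or the paper's cleaner decomposition.
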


The meaning of Proposition~\ref{P:sum:inf} is that asymptotically, the series $\sum_i T_i Z_i$ is large in norm if and only if one of the summands $T_i Z_i$ is large in norm, in which case $\norm{\sum_i T_i Z_i}$ and $\sum_i \norm{T_i X_i}$ are asymptotically equivalent---the value of the series is determined by a single large `shock'. The proof of Proposition~\ref{P:sum:inf} is based on a number of lemmas.

\begin{lemma}
\label{L:sum:2}
Let $X$ and $Y$ be independent random elements of a Banach space $\B$. Suppose that there exists $V \in \RV_{-\alpha}$ with $\alpha > 0$ and nonnegative numbers $a, b$ such that
\begin{align}
\label{E:ab}
	\lim_{x \to \infty} \frac{\Pr( \norm{X} > x )}{V(x)} &= a, &
	\lim_{x \to \infty} \frac{\Pr( \norm{Y} > x )}{V(x)} &= b.
\end{align}
Then as $x \to \infty$,
\begin{multline}
\label{E:sum:2}
  \lim_{x \to \infty} \frac{1}{V(x)} \E \bigl| \1( \norm{X + Y} > x ) - \1( \norm{X} > x ) - \1( \norm{Y} > x ) \bigr| \\
  = \lim_{x \to \infty} \frac{1}{V(x)} \E \bigl| \1( \norm{X} + \norm{Y} > x ) - \1( \norm{X} > x ) - \1( \norm{Y} > x ) \bigr|
  = 0.
\end{multline}
\end{lemma}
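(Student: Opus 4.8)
The plan is to prove both equalities by a pointwise analysis of the integrand, which reduces everything to one estimate: the event ``the sum exceeds $x$ in norm but no individual summand does'' has probability $o(V(x))$.

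First I would fix $x$ and set $A = \{\norm{X+Y} > x\}$, $B = \{\norm X > x\}$, $C = \{\norm Y > x\}$, then run through the eight sign patterns of $\bigl(\1(A),\1(B),\1(C)\bigr)$. The integrand $\bigl|\1(A)-\1(B)-\1(C)\bigr|$ vanishes except when exactly one indicator equals $1$ or when $\1(B)=\1(C)=1$. Since $B\cap C$ has probability $\Pr(\norm X>x)\Pr(\norm Y>x)\sim ab\,V(x)^2 = o(V(x))$ by independence and $V(x)\to 0$, the patterns with $\1(B)=\1(C)=1$ contribute $o(V(x))$, whence
\[
  \E\bigl|\1(A)-\1(B)-\1(C)\bigr| = \Pr(E_0)+\Pr(E_1)+\Pr(E_2) + o(V(x)),
\]
where $E_0 = \{\norm{X+Y}>x,\,\norm X\le x,\,\norm Y\le x\}$, $E_1=\{\norm{X+Y}\le x,\,\norm X>x,\,\norm Y\le x\}$, and $E_2$ is $E_1$ with $X$ and $Y$ interchanged. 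For the second equality I would repeat the analysis with $A$ replaced by $A'=\{\norm X+\norm Y>x\}$; since $\norm X,\norm Y\ge 0$ one has $B\cup C\subseteq A'$, so only the pattern $(1,0,0)$ — whose event is $R_x:=\{\norm X+\norm Y>x,\,\norm X\le x,\,\norm Y\le x\}$ — and $(1,1,1)\subseteq B\cap C$ survive. Thus it suffices to show $\Pr(R_x)$, $\Pr(E_0)$, $\Pr(E_1)$, $\Pr(E_2)$ are each $o(V(x))$.

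The core step is $\Pr(R_x)=o(V(x))$, which via $E_0\subseteq R_x$ (triangle inequality) also disposes of $E_0$. Fixing $\delta\in(0,1/2)$, if $\norm X+\norm Y>x$ while $\norm X\le(1-\delta)x$ and $\norm Y\le(1-\delta)x$ then necessarily $\norm X>\delta x$ and $\norm Y>\delta x$, so
\[
  R_x \subseteq \{(1-\delta)x<\norm X\le x\}\cup\{(1-\delta)x<\norm Y\le x\}\cup\bigl(\{\norm X>\delta x\}\cap\{\norm Y>\delta x\}\bigr).
\]
Dividing by $V(x)$, using $\Pr(\norm X>x)/V(x)\to a$ and $\Pr(\norm Y>x)/V(x)\to b$, the regular variation relation $V(cx)/V(x)\to c^{-\alpha}$ for the shells, and independence together with $\Pr(\norm X>\delta x)\to 0$ for the product term, one gets $\limsup_{x\to\infty}\Pr(R_x)/V(x)\le(a+b)\bigl((1-\delta)^{-\alpha}-1\bigr)$; letting $\delta\downarrow 0$ makes this $0$. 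For $E_1$ (and symmetrically $E_2$): on $E_1$ one has $\norm Y\ge\norm X-\norm{X+Y}>\norm X-x$, hence $\norm X>(1+\delta)x$ forces $\norm Y>\delta x$, so $E_1\subseteq\{x<\norm X\le(1+\delta)x\}\cup\bigl(\{\norm X>(1+\delta)x\}\cap\{\norm Y>\delta x\}\bigr)$, and dividing by $V(x)$ and letting $\delta\downarrow 0$ gives $\Pr(E_1)=o(V(x))$.

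I expect the $R_x$ estimate to be the main obstacle. The crude inclusion $R_x\subseteq\{\norm X>(1-\delta)x\}\cup\{\norm Y>(1-\delta)x\}$ yields only $\limsup\Pr(R_x)/V(x)\le(a+b)(1-\delta)^{-\alpha}$, which fails to vanish as $\delta\downarrow 0$; the point is to exploit that on $R_x$ \emph{both} norms are simultaneously at most $x$, so the relevant mass sits in the thin shell $(1-\delta)x<\norm X\le x$, whose $V(x)$-normalized probability is $\Pr(\norm X>(1-\delta)x)/V(x)-\Pr(\norm X>x)/V(x)\to a\bigl((1-\delta)^{-\alpha}-1\bigr)\to 0$. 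Everything else is bookkeeping with Potter-type bounds, independence, and the elementary inclusions above.
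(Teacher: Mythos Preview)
Your proof is correct and follows essentially the same approach as the paper: the paper organizes the computation via the decomposition $|r|=(r)_+ + (-r)_+$ rather than your eight-case analysis, but the key estimates---shell probabilities $\Pr((1-\delta)x<\norm{X}\le x)$ and $\Pr(x<\norm{X}\le(1+\delta)x)$ controlled via regular variation, plus the independence bound on $\{\norm{X}>\delta x\}\cap\{\norm{Y}>\delta x\}$---are identical. Your explicit handling of $B\cap C$ at the start corresponds to the paper's product term $\1(\norm{X}>\eps x)\1(\norm{Y}>\eps x)$, and your $R_x$ and $E_1,E_2$ bounds match the paper's positive-part and negative-part inequalities respectively.
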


\begin{proof}
Use the formula $|r| = (r)_+ + (-r)_+$ for real $r$ to decompose the absolute values in \eqref{E:sum:2}.

Fix $0 < \eps < 1$. For positive numbers $r, s, x$, if $r + s > x$, then necessarily $r \vee s > (1 - \eps) x$ or $r \wedge s > \eps x$. It follows that
\begin{align*}
	\1 ( \norm{X + Y} > x)
	&\le \1( \norm{X} + \norm{Y} > x ) \\
	&\le \1 \{ \norm{X} \vee \norm{Y} > (1-\eps) x \} 
	+ \1 ( \norm{X} \wedge \norm{Y} > \eps x ) \\
	&\le \1 \{ \norm{X} > (1 - \eps) x \}
	+ \1 \{ \norm{Y} > (1 - \eps) x \} \\
	& \qquad \mbox{} + \1 ( \norm{X} > \eps x ) \1 ( \norm{Y} > \eps x ).
\end{align*}
We find that
\begin{multline*}
  \1( \norm{X + Y} > x ) - \1( \norm{X} > x ) - \1( \norm{Y} > x ) \\
  \shoveleft \le \1( \norm{X} + \norm{Y} > x ) - \1( \norm{X} > x ) - \1( \norm{Y} > x ) \\
  \shoveleft \le \1 \{ (1-\eps) x < \norm{X} \le x \} + \1 \{ (1-\eps) x < \norm{Y} \le x \} \\
  + \1 ( \norm{X} > \eps x ) \1 ( \norm{Y} > \eps x ).
\end{multline*}
By regular variation of $V$ and by \eqref{E:ab},
\begin{multline}
\label{E:sum:2:+}
  \limsup_{x \to \infty} \frac{1}{V(x)} \E \bigl[ \bigl( \1( \norm{X + Y} > x ) - \1( \norm{X} > x ) - \1( \norm{Y} > x ) \bigr)_+ \bigr] \\
  \le \limsup_{x \to \infty} \frac{1}{V(x)} \E \bigl[ \bigl( \1( \norm{X} + {Y} > x ) - \1( \norm{X} > x ) - \1( \norm{Y} > x ) \bigr)_+ \bigr] \\
  \le (a + b) \bigl( (1 - \eps)^{-\alpha} - 1 \bigr).
\end{multline}
Since $\eps > 0$ was arbitrary, both limits superior in \eqref{E:sum:2:+} are actually zero.

Using the inequality $\norm{v + w} \ge |\norm{v} - \norm{w}|$ for $v, w \in \B$, we have
\begin{align*}
	\1( \norm{X + Y} > x)
	&\ge \1 (|\norm{X} - \norm{Y}| > x) \\
	&= \1 ( \norm{X} > x + \norm{Y} ) + \1 ( \norm{Y} > x + \norm{X} ).
\end{align*}
It follows that
\begin{multline*}
  \1( \norm{X} > x ) + \1( \norm{Y} > y ) - \1( \norm{X} + \norm{Y} > x ) \\
  \le \1( \norm{X} > x ) + \1( \norm{Y} > y ) - \1( \norm{X + Y} > x ) \\
  \le \1 ( x < \norm{X} \le x + \norm{Y} ) + \1 ( x < \norm{Y} \le x + \norm{X} ).
\end{multline*}
Fix $\eps > 0$. We have
\begin{multline*}
	\1 ( x < \norm{X} \le x + \norm{Y} ) \\
	\le \1 \{ x < \norm{X} \le (1+\eps) x \}
	+ \1 ( \norm{X} > x ) \1 ( \norm{Y} > \eps x ).
\end{multline*}
By regular variation of $V$ and \eqref{E:ab},
\[
	\limsup_{x \to \infty}
	\frac{\Pr( x < \norm{X} \le x + \norm{Y} )}{V(x)}
	\le a \bigl( 1 - (1+\eps)^{-\alpha} \bigr).
\]
By symmetry, we obtain
\begin{multline}
\label{E:sum:2:-}
  \limsup_{x \to \infty} \frac{1}{V(x)} \E \bigl[ \bigl( \1( \norm{X} > x ) + \1( \norm{Y} > y ) - \1( \norm{X} + \norm{Y} > x ) \bigr)_+ \bigr] \\
  \le \limsup_{x \to \infty} \frac{1}{V(x)} \E \bigl[ \bigl( \1( \norm{X} > x ) + \1( \norm{Y} > y ) - \1( \norm{X + Y} > x ) \bigr)_+ \bigr] \\
  \le (a + b) \bigl( 1 - (1+\eps)^{-\alpha} \bigr).
\end{multline}
Since $\eps > 0$ was arbitrary, both limits superior in \eqref{E:sum:2:-} are actually zero. Finally, combine \eqref{E:sum:2:+} and \eqref{E:sum:2:-} to arrive at \eqref{E:sum:2}.
\end{proof}

\begin{lemma}
\label{L:sum:n}
Let $X_1, \ldots, X_n$ be independent random elements of a Banach space $\B$. Suppose that there exists $V \in \RV_{-\alpha}$ with $\alpha > 0$ and nonnegative numbers $a_1, \ldots, a_n$ such that
\begin{equation}
\label{E:sum:n:ai}
	\lim_{x \to \infty} \frac{\Pr( \norm{X_i} > x )}{V(x)} = a_i, \qquad i \in \{1, \ldots, n\}.
\end{equation}
Then
\begin{multline}
\label{E:sum:n}
  \lim_{x \to \infty} \frac{1}{V(x)} \E \bigl| \1 \bigl( \norm{ \som_{i=1}^n X_i } > x \bigr) - \som_{i=1}^n \1 ( \norm{X_i} > x ) \bigr| \\
  = \lim_{x \to \infty} \frac{1}{V(x)} \E \bigl| \1 \bigl( \som_{i=1}^n \norm{X_i} > x \bigr) - \som_{i=1}^n \1 ( \norm{X_i} > x ) \bigr| = 0
\end{multline}
In particular,
\begin{equation}
\label{E:sum:n:tail}
  \lim_{x \to \infty} \frac{\Pr( \norm{ \som_{i=1}^n X_i } > x )}{V(x)}
  = \lim_{x \to \infty} \frac{\Pr( \som_{i=1}^n \norm{X_i} > x )}{V(x)}
  = \sum_{i=1}^n a_i.
\end{equation}
\end{lemma}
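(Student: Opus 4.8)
The plan is to prove the two identities in \eqref{E:sum:n} simultaneously by induction on $n$, using Lemma~\ref{L:sum:2} as the engine of the inductive step; the tail statement \eqref{E:sum:n:tail} then drops out at the end. The base case $n = 1$ is trivial: both differences inside the expectations in \eqref{E:sum:n} vanish identically, and \eqref{E:sum:n:tail} is just hypothesis \eqref{E:sum:n:ai}.

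For the inductive step, fix $n \ge 2$ and suppose the full conclusion—both halves of \eqref{E:sum:n} and \eqref{E:sum:n:tail}—holds for $n-1$ summands. Write $S = \som_{i=1}^{n-1} X_i$ and $W = \som_{i=1}^{n-1} \norm{X_i}$; each is a function of $X_1, \ldots, X_{n-1}$ and hence independent of $X_n$. By the induction hypothesis in the form \eqref{E:sum:n:tail}, both $\Pr(\norm{S} > x)/V(x)$ and $\Pr(W > x)/V(x)$ tend to $b := \som_{i=1}^{n-1} a_i$ as $x \to \infty$. Consequently Lemma~\ref{L:sum:2} applies to the independent pair $(S, X_n)$ with constants $b$ and $a_n$, and—taking $\B = \RR$ and the nonnegative variables $W$ and $\norm{X_n}$—also to the pair $(W, \norm{X_n})$, yielding
\[
  \frac{1}{V(x)} \E \bigl| \1(\norm{S + X_n} > x) - \1(\norm{S} > x) - \1(\norm{X_n} > x) \bigr| \to 0
\]
together with the analogous statement in which $\norm{S + X_n}$ is replaced by $W + \norm{X_n}$ and $\norm{S}$ by $W$.

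Next I would use the pointwise identity, obtained by inserting $\pm\,\1(\norm{S} > x)$ and splitting off the last summand in $\som_{i=1}^n \1(\norm{X_i} > x)$,
\[
  \1\bigl(\norm{\som_{i=1}^n X_i} > x\bigr) - \som_{i=1}^n \1(\norm{X_i} > x)
  = \Bigl[\1(\norm{S + X_n} > x) - \1(\norm{S} > x) - \1(\norm{X_n} > x)\Bigr] + \Bigl[\1(\norm{S} > x) - \som_{i=1}^{n-1}\1(\norm{X_i} > x)\Bigr].
\]
Taking absolute values, expectations, and dividing by $V(x)$, the first bracket contributes $o(1)$ by the display above, and the second contributes $o(1)$ by the induction hypothesis \eqref{E:sum:n} at level $n-1$; the same manipulation with $S$ replaced by $W$ disposes of the $\som_{i=1}^n \norm{X_i}$ half of \eqref{E:sum:n}. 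This closes the induction. Finally, \eqref{E:sum:n:tail} follows because $\bigl| \Pr(\norm{\som_{i=1}^n X_i} > x) - \som_{i=1}^n \Pr(\norm{X_i} > x) \bigr|$ is bounded by the left-hand expectation in \eqref{E:sum:n} and hence is $o(V(x))$, while $\som_{i=1}^n \Pr(\norm{X_i} > x)/V(x) \to \som_{i=1}^n a_i$ by \eqref{E:sum:n:ai}; the version for $\som_{i=1}^n \norm{X_i}$ is identical.

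I do not expect a genuine obstacle; the only point requiring care is bookkeeping. At each stage one must invoke the inductive hypothesis in two distinct forms at once—the tail asymptotics \eqref{E:sum:n:tail} for the partial sums $S$ and $W$, needed to verify the hypotheses of Lemma~\ref{L:sum:2}, and the sharper $L^1$-estimate \eqref{E:sum:n} for $S$ and $W$, needed to control the second bracket—which is precisely why the induction must carry both identities of \eqref{E:sum:n} jointly rather than establishing them one after the other.
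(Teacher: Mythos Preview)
Your proposal is correct and follows exactly the approach the paper indicates: induction on $n$ with Lemma~\ref{L:sum:2} driving the inductive step. The paper's own proof is the single sentence ``The proof is by induction on $n$, using Lemma~\ref{L:sum:2}''; you have supplied precisely the bookkeeping that sentence suppresses, including the observation that the inductive hypothesis must carry both \eqref{E:sum:n} and \eqref{E:sum:n:tail} simultaneously.
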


\begin{proof}
The proof is by induction on $n$, using Lemma~\ref{L:sum:2}.
\end{proof}

\begin{proof}[Proof of Proposition~\ref{P:sum:inf}]
Since $\sum_i \norm{T_i Z_i} \le \sum_i \norm{T_i} \norm{Z_i}$, conditions (i) and (iii) together with the argument on p.~225 in \cite{RES87} imply absolute convergence of the series $\sum_i T_i Z_i$ almost surely.

Write $X_i = T_i Z_i$. In order to prove \eqref{E:sum:inf}, let $m$ be a positive integer. Write
\begin{multline}
\label{E:sum:inf:decomp}
	\bigl| \1 ( \norm{\som_i X_i} > x ) - \som_i \1 ( \norm{X_i} > x ) \bigr| \\
	\le \bigl| \1 (\norm{\som_i X_i} > x) - \1 (\norm{\som_{|i| \le m} X_i} > x) \bigr| \\
	\shoveright{+ \bigl| \1 (\norm{\som_{|i| \le m} X_i} > x) 
	- \som_{|i| \le m} \1 ( \norm{X_i} > x) \bigr|} \\
	+ \som_{|i| > m} \1 (\norm{X_i} > x).
\end{multline}
The three terms on the right-hand side are treated separately.

Consider the first term on the right-hand side of equation~\eqref{E:sum:inf:decomp}. Let $X = \som_{|i| \le m} X_i$ and $Y = \som_{|i| > m} X_i$. Using the identity $|\1_A - \1_B| = \1_{A \symdif B}$, it is not difficult to see that for $0 < \eps < 1$,
\begin{multline*}
	\bigl| \1( \norm{X + Y} > x ) - \1( \norm{X} > x ) \bigr| \\
	\le \1 (\norm{Y} > \eps x) + \1\{(1-\eps)x < \norm{X} \le (1+\eps)x \}.
\end{multline*}
By Lemma~4.24 in \cite{RES87}, since $\norm{Y} \le \sum_{|i| > m} \norm{T_i} \norm{Z_i}$, 
\[
	\limsup_{y \to \infty} \frac{\Pr( \norm{Y} > y )}{V(y)}
	%\le \limsup_{y \to \infty} \frac{\Pr( \som_{|i| > m} \norm{T_i} \norm{Z_i} > y)}{V(y)}
	\le \sum_{|i| > m} \norm{T_i}^\alpha.
\]
By (ii) and \eqref{E:sum:n:tail},
\[
	\lim_{y \to \infty} \frac{\Pr( \norm{X} > y )}{V(y)}
	= \sum_{|i| \le m} a_i.
\]
By regular variation of $V$ and from $\norm{T_i Z_i} \le \norm{T_i} \norm{V_i}$, it follows easily that $a_i \le \norm{T_i}^\alpha$. In view of the three preceding displays,
we have
\begin{multline}
\label{E:sum:inf:decomp:A}
	\limsup_{x \to \infty} \frac{1}{V(x)} 
	\E \bigl| \1( \norm{X + Y} > x ) - \1( \norm{X} > x ) \bigr| \\
	\le \eps^{-\alpha} \sum_{|i| > m} \norm{T_i}^\alpha
	+ \bigl( (1-\eps)^{-\alpha} - (1+\eps)^{-\alpha} \bigr) \sum_{|i| \le m} a_i \\
	\le \eps^{-\alpha} \sum_{|i| > m} \norm{T_i}^\alpha
	+ \bigl( (1-\eps)^{-\alpha} - (1+\eps)^{-\alpha} \bigr) \sum_{i \in \Z} \norm{T_i}^\alpha.
\end{multline}

Next, for the second term on the right-hand side of \eqref{E:sum:inf:decomp}, Lemma~\ref{L:sum:n} implies immediately that, as $x \to \infty$,
\begin{equation}
\label{E:sum:inf:decomp:B}
	\E \bigl| \1 (\norm{\som_{|i| \le m} X_i} > x) - \som_{|i| \le m} \1 ( \norm{X_i} > x) \bigr| = o\bigl( V(x) \bigr).
\end{equation}

Finally, consider the third term on the right-hand side of \eqref{E:sum:inf:decomp}. Let $m$ be sufficiently large such that $\norm{T_i} \le 1$ whenever $|i| > m$. By Potter's theorem, there exists $x_0 > 0$ such that $V(c x) / V(x) \le 2 c^{-\delta}$ for every $x \ge x_0$ and $c \ge 1$. Hence
\begin{equation}
\label{E:Potter:TiZi}
	\frac{\Pr( \norm{X_i} > x )}{V(x)}
	\le \frac{\Pr( \norm{T_i} \norm{Z_i} > x )}{V(x)} 
	\le 2 \norm{T_i}^\delta, \qquad x \ge x_0, \, |i| > m.
\end{equation}
As a consequence,
\begin{equation}
\label{E:sum:inf:decomp:C}
	\limsup_{x \to \infty} \frac{1}{V(x)}
	\sum_{|i| > m} \Pr( \norm{X_i} > x ) 
	\le 2 \sum_{|i| > m} \norm{T_i}^\delta.
\end{equation}

Taking expectations in \eqref{E:sum:inf:decomp} and inserting the upper bounds in \eqref{E:sum:inf:decomp:A}, \eqref{E:sum:inf:decomp:B} and \eqref{E:sum:inf:decomp:C} yields the following inequality, valid for all $0 < \eps < 1$ and for all sufficiently large integer $m$:
\begin{multline*}
	\limsup_{x \to \infty} \frac{1}{V(x)}
	\E\bigl| \1 ( \norm{\som_i X_i} > x ) - \som_i \1 ( \norm{X_i} > x ) \bigr| \\
	\le \eps^{-\alpha} \sum_{|i| > m} \norm{T_i}^\alpha 
	+ \bigl( (1-\eps)^{-\alpha} - (1+\eps)^{-\alpha} \bigr) \sum_{i \in \Z} \norm{T_i}^\alpha %\\
	+ 2 \sum_{|i| > m} \norm{T_i}^\delta.
\end{multline*}
First let $m \to \infty$ and then let $\eps \downarrow 0$ to arrive at \eqref{E:sum:inf} for $\norm{\sum_i T_i Z_i}$. The proof for $\sum_i \norm{T_i Z_i}$ is completely similar.
	
Equation~\eqref{E:sum:inf:tail} follows from \eqref{E:sum:inf} and the dominated convergence theorem, the use of which is justified by (iii) and \eqref{E:Potter:TiZi}.
\end{proof}

%\bibliographystyle{imsart-nameyear}
%\bibliography{B-RV}

\begin{thebibliography}{26}

\bibitem[\protect\citeauthoryear{Basrak, Davis and Mikosch}{2002}]{B02b}
\begin{barticle}[author]
\bauthor{\bsnm{Basrak},~\bfnm{Bojan}\binits{B.}},
  \bauthor{\bsnm{Davis},~\bfnm{Richard~A.}\binits{R.~A.}} \AND
  \bauthor{\bsnm{Mikosch},~\bfnm{Thomas}\binits{T.}}
(\byear{2002}).
\btitle{Regular variation of {GARCH} processes}.
\bjournal{Stochastic Process. Appl.}
\bvolume{99}
\bpages{95--115}.
\bmrnumber{MR1894253 (2003b:62155)}
\end{barticle}
\endbibitem

\bibitem[\protect\citeauthoryear{Basrak, Krizmani\'c and Segers}{2010}]{BKS10}
\begin{bunpublished}[author]
\bauthor{\bsnm{Basrak},~\bfnm{Bojan}\binits{B.}},
  \bauthor{\bsnm{Krizmani\'c},~\bfnm{Danijel}\binits{D.}} \AND
  \bauthor{\bsnm{Segers},~\bfnm{Johan}\binits{J.}}
(\byear{2010}).
\btitle{A functional limit theorem for partial sums of dependent random
  variables with infinite variance}.
\bnote{Universit\'e catholique de Louvain, Institut de statistique, DP1001.
  arXiv:1001.1345v1 [math.PR]}.
\end{bunpublished}
\endbibitem

\bibitem[\protect\citeauthoryear{Basrak and Segers}{2009}]{BS09}
\begin{barticle}[author]
\bauthor{\bsnm{Basrak},~\bfnm{Bojan}\binits{B.}} \AND
  \bauthor{\bsnm{Segers},~\bfnm{Johan}\binits{J.}}
(\byear{2009}).
\btitle{Regularly varying multivariate time series}.
\bjournal{Stochastic Process. Appl.}
\bvolume{119}
\bpages{1055--1080}.
\end{barticle}
\endbibitem

\bibitem[\protect\citeauthoryear{Billingsley}{1999}]{BIL68}
\begin{bbook}[author]
\bauthor{\bsnm{Billingsley},~\bfnm{Patrick}\binits{P.}}
(\byear{1999}).
\btitle{Convergence of probability measures},
\bedition{Second} ed.
\bseries{Wiley Series in Probability and Statistics: Probability and
  Statistics}.
\bpublisher{John Wiley \& Sons Inc.}, \baddress{New York}.
\bnote{A Wiley-Interscience Publication}.
\bmrnumber{MR1700749 (2000e:60008)}
\end{bbook}
\endbibitem

\bibitem[\protect\citeauthoryear{Bosq}{2000}]{BOS00}
\begin{bbook}[author]
\bauthor{\bsnm{Bosq},~\bfnm{Denis}\binits{D.}}
(\byear{2000}).
\btitle{Linear processes in function spaces}.
\bseries{Lecture Notes in Statistics}
\bvolume{149}.
\bpublisher{Springer-Verlag}, \baddress{New York}.
\bnote{Theory and applications}.
\bmrnumber{MR1783138 (2001k:60054)}
\end{bbook}
\endbibitem

\bibitem[\protect\citeauthoryear{Cooley, Nychka and Naveau}{2007}]{COO07}
\begin{barticle}[author]
\bauthor{\bsnm{Cooley},~\bfnm{Daniel}\binits{D.}},
  \bauthor{\bsnm{Nychka},~\bfnm{Douglas}\binits{D.}} \AND
  \bauthor{\bsnm{Naveau},~\bfnm{Philippe}\binits{P.}}
(\byear{2007}).
\btitle{Bayesian spatial modeling of extreme precipitation return levels}.
\bjournal{J. Amer. Statist. Assoc.}
\bvolume{102}
\bpages{824--840}.
\bmrnumber{MR2411647}
\end{barticle}
\endbibitem

\bibitem[\protect\citeauthoryear{Daley and Vere-Jones}{1988}]{DVJ88}
\begin{bbook}[author]
\bauthor{\bsnm{Daley},~\bfnm{D.~J.}\binits{D.~J.}} \AND
  \bauthor{\bsnm{Vere-Jones},~\bfnm{D.}\binits{D.}}
(\byear{1988}).
\btitle{An Introduction to the Theory of Point Processes}.
\bseries{Springer Series in Statistics}.
\bpublisher{Springer-Verlag}, \baddress{New York}.
\bmrnumber{MR950166 (90e:60060)}
\end{bbook}
\endbibitem

\bibitem[\protect\citeauthoryear{Davis and Hsing}{1995}]{DH95}
\begin{barticle}[author]
\bauthor{\bsnm{Davis},~\bfnm{Richard~A.}\binits{R.~A.}} \AND
  \bauthor{\bsnm{Hsing},~\bfnm{Tailen}\binits{T.}}
(\byear{1995}).
\btitle{Point process and partial sum convergence for weakly dependent random
  variables with infinite variance}.
\bjournal{Ann. Probab.}
\bvolume{23}
\bpages{879--917}.
\bmrnumber{MR1334176 (96g:60031)}
\end{barticle}
\endbibitem

\bibitem[\protect\citeauthoryear{Davis and Mikosch}{1998}]{DM98}
\begin{barticle}[author]
\bauthor{\bsnm{Davis},~\bfnm{Richard~A.}\binits{R.~A.}} \AND
  \bauthor{\bsnm{Mikosch},~\bfnm{Thomas}\binits{T.}}
(\byear{1998}).
\btitle{The sample autocorrelations of heavy-tailed processes with applications
  to {ARCH}}.
\bjournal{Ann. Statist.}
\bvolume{26}
\bpages{2049--2080}.
\bmrnumber{MR1673289 (2000d:62135)}
\end{barticle}
\endbibitem

\bibitem[\protect\citeauthoryear{Davis and Mikosch}{2008}]{DM08}
\begin{barticle}[author]
\bauthor{\bsnm{Davis},~\bfnm{Richard~A.}\binits{R.~A.}} \AND
  \bauthor{\bsnm{Mikosch},~\bfnm{Thomas}\binits{T.}}
(\byear{2008}).
\btitle{Extreme value theory for space-time processes with heavy-tailed
  distributions}.
\bjournal{Stochastic Process. Appl.}
\bvolume{118}
\bpages{560--584}.
\bmrnumber{MR2394763 (2009b:62112)}
\end{barticle}
\endbibitem

\bibitem[\protect\citeauthoryear{Davis and Mikosch}{2009}]{DM09}
\begin{barticle}[author]
\bauthor{\bsnm{Davis},~\bfnm{Richard~A.}\binits{R.~A.}} \AND
  \bauthor{\bsnm{Mikosch},~\bfnm{Thomas}\binits{T.}}
(\byear{2009}).
\btitle{The extremogram: A correllogram for extreme events}.
\bjournal{Bernoulli}.
\bnote{To appear.}
\end{barticle}
\endbibitem

\bibitem[\protect\citeauthoryear{Davis and Resnick}{1985}]{DR85}
\begin{barticle}[author]
\bauthor{\bsnm{Davis},~\bfnm{Richard}\binits{R.}} \AND
  \bauthor{\bsnm{Resnick},~\bfnm{Sidney}\binits{S.}}
(\byear{1985}).
\btitle{Limit theory for moving averages of random variables with regularly
  varying tail probabilities}.
\bjournal{Ann. Probab.}
\bvolume{13}
\bpages{179--195}.
\bmrnumber{MR770636 (86c:60031)}
\end{barticle}
\endbibitem

\bibitem[\protect\citeauthoryear{Davis and Resnick}{1986}]{DR86}
\begin{barticle}[author]
\bauthor{\bsnm{Davis},~\bfnm{Richard}\binits{R.}} \AND
  \bauthor{\bsnm{Resnick},~\bfnm{Sidney}\binits{S.}}
(\byear{1986}).
\btitle{Limit theory for the sample covariance and correlation functions of
  moving averages}.
\bjournal{Ann. Statist.}
\bvolume{14}
\bpages{533--558}.
\bmrnumber{MR840513 (87k:62148)}
\end{barticle}
\endbibitem

\bibitem[\protect\citeauthoryear{Davydov, Molchanov and Zuyev}{2008}]{DMZ08}
\begin{barticle}[author]
\bauthor{\bsnm{Davydov},~\bfnm{Youri}\binits{Y.}},
  \bauthor{\bsnm{Molchanov},~\bfnm{Iliya}\binits{I.}} \AND
  \bauthor{\bsnm{Zuyev},~\bfnm{Serguei}\binits{S.}}
(\byear{2008}).
\btitle{Strictly stable distributions on convex cones}.
\bjournal{Electronic Journal of Probability}
\bvolume{13}
\bpages{259--321}.
\end{barticle}
\endbibitem

\bibitem[\protect\citeauthoryear{de~Haan and Ferreira}{2006}]{DHF06}
\begin{bbook}[author]
\bauthor{\bparticle{de~}\bsnm{Haan},~\bfnm{Laurens}\binits{L.}} \AND
  \bauthor{\bsnm{Ferreira},~\bfnm{Ana}\binits{A.}}
(\byear{2006}).
\btitle{Extreme value theory}.
\bseries{Springer Series in Operations Research and Financial Engineering}.
\bpublisher{Springer}, \baddress{New York}.
\bnote{An introduction}.
\bmrnumber{MR2234156 (2007g:62008)}
\end{bbook}
\endbibitem

\bibitem[\protect\citeauthoryear{de~Haan and Lin}{2001}]{DHL01}
\begin{barticle}[author]
\bauthor{\bparticle{de~}\bsnm{Haan},~\bfnm{Laurens}\binits{L.}} \AND
  \bauthor{\bsnm{Lin},~\bfnm{Tao}\binits{T.}}
(\byear{2001}).
\btitle{On convergence toward an extreme value distribution in {$C[0,1]$}}.
\bjournal{Ann. Probab.}
\bvolume{29}
\bpages{467--483}.
\bmrnumber{MR1825160 (2002d:62015)}
\end{barticle}
\endbibitem

\bibitem[\protect\citeauthoryear{Embrechts, Kl{\"u}ppelberg and
  Mikosch}{1997}]{EKM97}
\begin{bbook}[author]
\bauthor{\bsnm{Embrechts},~\bfnm{Paul}\binits{P.}},
  \bauthor{\bsnm{Kl{\"u}ppelberg},~\bfnm{Claudia}\binits{C.}} \AND
  \bauthor{\bsnm{Mikosch},~\bfnm{Thomas}\binits{T.}}
(\byear{1997}).
\btitle{Modelling extremal events}.
\bpublisher{Springer-Verlag}, \baddress{Berlin}.
\bmrnumber{MR1458613 (98k:60080)}
\end{bbook}
\endbibitem

\bibitem[\protect\citeauthoryear{Hult and Lindskog}{2005}]{HL05}
\begin{barticle}[author]
\bauthor{\bsnm{Hult},~\bfnm{Henrik}\binits{H.}} \AND
  \bauthor{\bsnm{Lindskog},~\bfnm{Filip}\binits{F.}}
(\byear{2005}).
\btitle{Extremal behavior of regularly varying stochastic processes}.
\bjournal{Stochastic Process. Appl.}
\bvolume{115}
\bpages{249--274}.
\bmrnumber{MR2111194 (2005j:60104)}
\end{barticle}
\endbibitem

\bibitem[\protect\citeauthoryear{Hult and Lindskog}{2006}]{HL06}
\begin{barticle}[author]
\bauthor{\bsnm{Hult},~\bfnm{Henrik}\binits{H.}} \AND
  \bauthor{\bsnm{Lindskog},~\bfnm{Filip}\binits{F.}}
(\byear{2006}).
\btitle{Regular variation for measures on metric spaces}.
\bjournal{Publ. Inst. Math. (Beograd) (N.S.)}
\bvolume{80(94)}
\bpages{121--140}.
\bmrnumber{MR2281910 (2008g:28016)}
\end{barticle}
\endbibitem

\bibitem[\protect\citeauthoryear{Hult and Samorodnitsky}{2008}]{HS08}
\begin{barticle}[author]
\bauthor{\bsnm{Hult},~\bfnm{Henrik}\binits{H.}} \AND
  \bauthor{\bsnm{Samorodnitsky},~\bfnm{Alex}\binits{A.}}
(\byear{2008}).
\btitle{Tail probabilities for infinite series of regularly varying random
  vectors}.
\bjournal{Bernoulli}
\bvolume{14}
\bpages{838--864}.
\end{barticle}
\endbibitem

\bibitem[\protect\citeauthoryear{Leadbetter}{1983}]{LB83}
\begin{barticle}[author]
\bauthor{\bsnm{Leadbetter},~\bfnm{M.~R.}\binits{M.~R.}}
(\byear{1983}).
\btitle{Extremes and local dependence in stationary sequences}.
\bjournal{Z. Wahrsch. Verw. Gebiete}
\bvolume{65}
\bpages{291--306}.
\bmrnumber{MR722133 (85b:60033)}
\end{barticle}
\endbibitem

\bibitem[\protect\citeauthoryear{Mikosch and Samorodnitsky}{2000}]{MS00}
\begin{barticle}[author]
\bauthor{\bsnm{Mikosch},~\bfnm{Thomas}\binits{T.}} \AND
  \bauthor{\bsnm{Samorodnitsky},~\bfnm{Gennady}\binits{G.}}
(\byear{2000}).
\btitle{The supremum of a negative drift random walk with dependent
  heavy-tailed steps}.
\bjournal{Ann. Appl. Probab.}
\bvolume{10}
\bpages{1025--1064}.
\bmrnumber{MR1789987 (2001j:60056)}
\end{barticle}
\endbibitem

\bibitem[\protect\citeauthoryear{Pollard}{2002}]{POL02}
\begin{bbook}[author]
\bauthor{\bsnm{Pollard},~\bfnm{David}\binits{D.}}
(\byear{2002}).
\btitle{A user's guide to measure theoretic probability}.
\bseries{Cambridge Series in Statistical and Probabilistic Mathematics}
\bvolume{8}.
\bpublisher{Cambridge University Press}, \baddress{Cambridge}.
\bmrnumber{MR1873379 (2002k:60003)}
\end{bbook}
\endbibitem

\bibitem[\protect\citeauthoryear{Resnick}{2007}]{RES07}
\begin{bbook}[author]
\bauthor{\bsnm{Resnick},~\bfnm{Sidney~I.}\binits{S.~I.}}
(\byear{2007}).
\btitle{Heavy-tail phenomena}.
\bseries{Springer Series in Operations Research and Financial Engineering}.
\bpublisher{Springer}, \baddress{New York}.
\bnote{Probabilistic and statistical modeling}.
\bmrnumber{MR2271424 (2008j:60005)}
\end{bbook}
\endbibitem

\bibitem[\protect\citeauthoryear{Resnick}{2008}]{RES87}
\begin{bbook}[author]
\bauthor{\bsnm{Resnick},~\bfnm{Sidney~I.}\binits{S.~I.}}
(\byear{2008}).
\btitle{Extreme values, regular variation and point processes}.
\bseries{Springer Series in Operations Research and Financial Engineering}.
\bpublisher{Springer}, \baddress{New York}.
\bnote{Reprint of the 1987 original}.
\bmrnumber{MR2364939 (2008h:60002)}
\end{bbook}
\endbibitem

\bibitem[\protect\citeauthoryear{van~der Vaart and Wellner}{1996}]{VDV96}
\begin{bbook}[author]
\bauthor{\bparticle{van~der }\bsnm{Vaart},~\bfnm{Aad~W.}\binits{A.~W.}} \AND
  \bauthor{\bsnm{Wellner},~\bfnm{Jon~A.}\binits{J.~A.}}
(\byear{1996}).
\btitle{Weak convergence and empirical processes}.
\bseries{Springer Series in Statistics}.
\bpublisher{Springer-Verlag}, \baddress{New York}.
\bnote{With applications to statistics}.
\bmrnumber{MR1385671 (97g:60035)}
\end{bbook}
\endbibitem

\end{thebibliography}

\end{document}